\documentclass[a4paper,11pt]{article}

\usepackage[utf8x]{inputenc}
\usepackage[T1]{fontenc}
\usepackage{lmodern}

\usepackage[colorlinks]{hyperref}
\hypersetup{linkcolor=blue, citecolor=red}

\usepackage{bbm}
\usepackage[bbgreekl]{mathbbol}
\usepackage{nccrules}
\usepackage[nottoc]{tocbibind}
\usepackage[intlimits,leqno]{amsmath}
\usepackage{amsthm}
\usepackage{amssymb}
\usepackage{mathrsfs}
\usepackage{stmaryrd}
\usepackage{xspace}
\usepackage[all]{xy}
%\newdir{ >}{{}*!/-10pt/\dir{>}}
%\newdir{|>}{!/4.5pt/@{|}*:(1,-.2)@^{>}*:(1,+.2)@_{>}}

\usepackage{paralist} \setdefaultitem{$\star$}{}{}{}

% Change this if need be
\usepackage{geometry}
\geometry{
  paper=a4paper,
  top=3cm,
  inner=2.54cm,
  outer=2.54cm,
  bottom=3cm,
  headheight=5ex,
  headsep=5ex,
}

% Well-known algebraic structures

\newcommand{\Z}{\ensuremath{\mathbb{Z}}}
\newcommand{\Q}{\ensuremath{\mathbb{Q}}}
\newcommand{\R}{\ensuremath{\mathbb{R}}}
\newcommand{\C}{\ensuremath{\mathbb{C}}}

% Algebra

\newcommand{\Tr}{\ensuremath{\mathrm{tr}\xspace}}

\newcommand{\Sym}{\ensuremath{\mathrm{Sym}}}

% Analysis
\newcommand{\dd}{\ensuremath{\,\mathrm{d}}}

% General things...

\newcommand{\angles}[1]{\ensuremath{\langle #1 \rangle}}
\newcommand{\mes}{\ensuremath{\mathrm{mes}}}

\newcommand{\Stab}{\ensuremath{\mathrm{Stab}}}

% Categorical Terms (in my view)
\newcommand{\identity}{\ensuremath{\mathrm{id}}}

\newcommand{\Hom}{\ensuremath{\mathrm{Hom}}}

\newcommand{\rightiso}{\ensuremath{\stackrel{\sim}{\rightarrow}}}

% Homological Algebra
\newcommand{\Ker}{\ensuremath{\mathrm{ker}\xspace}}

% Geometry

\newcommand{\Ad}{\ensuremath{\mathrm{Ad}\xspace}}

\newcommand{\Spec}{\ensuremath{\mathrm{Spec}\xspace}}

\newcommand{\mult}{\ensuremath{\mathrm{mult}}}

\newcommand{\Gm}{\ensuremath{\mathbb{G}_\mathrm{m}}}
\newcommand{\Ga}{\ensuremath{\mathbb{G}_\mathrm{a}}}

\newcommand{\Supp}{\ensuremath{\mathrm{Supp}}}

\newcommand{\utimes}[1]{\ensuremath{\overset{#1}{\times}}}

% Groups
\newcommand{\GL}{\ensuremath{\mathrm{GL}}}

\newcommand{\GSpin}{\ensuremath{\mathrm{GSpin}}}

\newcommand{\PGL}{\ensuremath{\mathrm{PGL}}}

\newcommand{\SL}{\ensuremath{\mathrm{SL}}}
\newcommand{\Sp}{\ensuremath{\mathrm{Sp}}}
\newcommand{\GSp}{\ensuremath{\mathrm{GSp}}}

\newcommand{\MSp}{\ensuremath{\mathrm{MSp}}}

% Categories

% L-group

% Numbering of sections, etc
%\setcounter{secnumdepth}{1}

% Environments for theorems etc.
% I prefer linear ordering within subsections.
\theoremstyle{plain}
\newtheorem{proposition}{Proposition}[subsection]
\newtheorem{lemma}[proposition]{Lemma}
\newtheorem{theorem}[proposition]{Theorem}
\newtheorem{corollary}[proposition]{Corollary}

\theoremstyle{definition}
\newtheorem{definition}[proposition]{Definition}
\newtheorem{definition-theorem}[proposition]{Definition-Theorem}
\newtheorem{definition-proposition}[proposition]{Definition-Proposition}
\newtheorem{hypothesis}[proposition]{Hypothesis}
\newtheorem{example}[proposition]{Example}

\theoremstyle{remark}
\newtheorem{remark}[proposition]{Remark}

%\numberwithin{equation}{section}

\renewcommand{\Re}{\ensuremath{\mathrm{Re}\xspace}}

 % Extra spacing in a table environment.

\title{Basic functions and unramified local $L$-factors for split groups}
\author{Wen-Wei Li}
\date{}
%\date{Draft: \today}

\begin{document}

\maketitle

\begin{abstract}
  According to a program of Braverman, Kazhdan and Ngô Bao Châu, for a large class of split unramified reductive groups $G$ and representations $\rho$ of the dual group $\hat{G}$, the unramified local $L$-factor $L(s,\pi,\rho)$ can be expressed as the trace of $\pi(f_{\rho,s})$ for a suitable function $f_{\rho,s}$ with non-compact support whenever $\Re(s) \gg 0$. Such functions can be plugged into the trace formula to study certain sums of automorphic $L$-functions. It also fits into the conjectural framework of Schwartz spaces for reductive monoids due to Sakellaridis, who coined the term \textit{basic functions}; this is supposed to lead to a generalized Tamagawa-Godement-Jacquet theory for $(G,\rho)$. %Very little is known about $f_{\rho,s}$  beyond the Tamagawa-Godement-Jacquet case, in which $G=\GL(n)$ and $\rho$ is the standard representation.
  In this article, we derive some basic properties for the basic functions $f_{\rho,s}$ and interpret them via invariant theory. In particular, their coefficients are interpreted as certain generalized Kostka-Foulkes polynomials defined by Panyushev. These coefficients can be encoded into a rational generating function.
\end{abstract}

\tableofcontents

\section{Introduction}\label{sec:intro}

\paragraph{History}
Let $F$ be a non-archimedean local field, $\mathfrak{o}_F$ be its ring of integers, and choose a uniformizer $\varpi$ of the maximal ideal of $\mathfrak{o}_F$. Denote by $q_F$ the cardinality of the residue field of $F$. Let $G$ be an unramified connected reductive $F$-group. Fix a hyperspecial maximal compact subgroup $K$ of $G(F)$. In this article we shall always assume $G$ split. Let $\hat{G}$ be the complex dual group of $G$, and $\rho: \hat{G} \to \GL(V,\C)$ be a finite-dimensional algebraic representation. The unramified local $L$-factor attached to these objects is defined by
$$ L(s,\pi,\rho) := \det(1 - \rho(c)q_F^{-s}|V)^{-1} \quad \in \C(q_F^s), \; s \in \C $$
for an unramified irreducible representation $\pi$ of $G(F)$ of Satake parameter $c \in \hat{T}/W$ where $\hat{T}$ is a maximal torus of $\hat{G}$ and $W$ is the corresponding Weyl group. It is the basic building block of automorphic $L$-functions via Euler products.

The first and perhaps the best studied example is the \textit{standard $L$-factor} of Tamagawa \cite{Ta63}, Godement and Jacquet \cite{GJ72}. It corresponds to the case where $G = \GL(n)$ and $\rho = \text{Std}: \GL(n,\C) \to \GL(n,\C)$ is the \textit{standard representation}, i.e.\! the identity map. Their approach is to consider the function $\mathbbm{1}_{\text{Mat}_{n \times n}(\mathfrak{o}_F)}$, identified with its restriction to $\GL(n,F)$. The integral pairing between $\mathbbm{1}_{\text{Mat}_{n \times n}(\mathfrak{o}_F)}$ and the zonal spherical function of $\pi \otimes |\det|_F^s$ yields $L\left(s - \frac{n-1}{2}, \pi, \text{Std}\right)$ whenever $\Re(s) \gg 0$. Their result can be paraphrased as follows: the Satake transform $\mathcal{S}(\mathbbm{1}_{\text{Mat}_{n \times n}(\mathfrak{o}_F)})$ equals the rational function $c \mapsto \det\left( 1 - \rho(c)q_F^{-s+(n-1)/2} \big| V \right)^{-1}$ on $\hat{T}/W$. 
Some generalization of the Satake isomorphism is needed, as $\mathbbm{1}_{\text{Mat}_{n \times n}(\mathfrak{o}_F)}$ is not compactly supported on $\GL(n,F)$ and $\det(1 - \rho(\cdot)q_F^{-s+(n-1)/2} \big| V)^{-1}$ is not a regular function. This is not a serious issue, however (see \S\ref{sec:ac}).

How about other pairs $(G,\rho)$? Satake \cite[Appendix 1]{Sat63} and Shimura \cite{Shi63} independently tried some other classical groups such as $\GSp(4)$ embedded in the monoid $\MSp(4)$, that is, its Zariski closure in $\text{Mat}_{4 \times 4}$. Here the determinant character of $\GL(n)$ is replaced by the similitude character $\GSp(4) \to \Gm$. It turns out that the function $\mathbbm{1}_{\MSp(4, \mathfrak{o}_F)}$ does not produce an $L$-factor. % We will discuss this bewildering phenomenon in \S\ref{sec:GSp4}. 

Braverman and Kazhdan \cite{BK00} explored the idea of a generalized Godement-Jacquet theory from the other side. Roughly speaking, they considered a short exact sequence of connected reductive groups
\begin{gather}\label{eqn:ses-0}
  1 \to G_0 \to G \xrightarrow{\det_G} \Gm \to 1
\end{gather}
and a representation $\rho: \hat{G} \to \GL(V,\C)$ such that $\rho$ restricted to $\Gm \subset \hat{G}$ is $z \mapsto z\cdot\identity$. In the unramified case, they started from the observation
\begin{gather}\label{eqn:L-sym-0}
  L(s, \pi, \rho) = \sum_{k \geq 0} \Tr(\Sym^k \rho(c)) q_F^{-ks}, \quad \Re(s) \gg 0.
\end{gather}
In this framework, a distinguished $K$-bi-invariant function $f_\rho$ is defined by taking the sum $\sum_{k \geq 0} \mathcal{S}^{-1}(\Tr(\Sym^k \rho))$. This is a well-defined function on $G(F)$ since the $k$-th summand is supported on $\{g \in G(F) : |\det_G(g)|_F = q_F^{-k} \}$. Note that it is never compactly supported on $G(F)$. We deduce that
\begin{gather}\label{eqn:f_rho-L-0}
  \Tr\left(\pi \otimes |\det_G|_F^s \right) (f_\rho) = L(s, \pi, \rho)
\end{gather}
for every unramified irreducible representation $\pi$, whenever $\Re(s) \gg 0$.

In the Tamagawa-Godement-Jacquet setting, one recovers $f_{\text{Std}} = \mathbbm{1}_{\text{Mat}_{n\times n}(\mathfrak{o}_F)} |\det|_F^{(n-1)/2}$. In general, however, almost nothing has been said about $f_\rho$ beyond its existence.

These constructions can be understood in terms of a program of Sakellaridis \cite[\S 3]{Sake12}, who emphasized the importance of the conjectural \textit{Schwartz space} $\mathcal{S}(X)$ attached to a spherical $G$-variety $X$ in harmonic analysis. In the unramified local setting, there should exist a distinguished element called the \textit{basic function} in $\mathcal{S}(X)$ whose behaviour reflects the singularities of $X$. In the preceding cases, the relevant spherical varieties are expected to be some \textit{reductive monoids} \cite{Vi95,Re05} containing $G$ as the unit group, such as $\text{Mat}_{n \times n} \supset \GL(n)$ or $\MSp(2n) \supset \GSp(2n)$.

In \cite{NgSa,Ng12} Ngô Bao Châu formulates a precise construction of the objects in \eqref{eqn:ses-0} and an irreducible representation $\rho: \hat{G} \to \GL(V,\C)$. The inputs are
\begin{inparaenum}[(i)]
  \item a simply connected split semisimple $F$-group $G_0$ and
  \item a dominant element $\bar{\xi}$ in $X_*(T_\text{ad})$, where $T_\text{ad}$ denotes a maximal torus in the adjoint group $G_\text{AD}$.
\end{inparaenum}
The relevant monoid $M_\xi$ here is constructed from $\bar{\xi}$ using Vinberg's \textit{enveloping monoids} \cite[Theorem 5]{Vi95}. The basic function $f_\rho$ is defined by inverting the Satake transform $\mathcal{S}$ as before so that \eqref{eqn:f_rho-L-0} is satisfied. In the equi-characteristic setting, Ngô conjectures that $f_\rho$ comes from some perverse sheaf on $M_\xi$ via the function-sheaf dictionary. Note that reductive monoids are usually singular \cite[Theorem 27.25]{Ti11}.

The significance of the functions $f_\rho$ may be partially explained by the fact that they can be plugged into the Arthur-Selberg trace formula upon some twist $f_{\rho,s} = f_\rho |\det_G|_F^s$ with $\Re(s) \gg 0$ (see \cite{FLM11} for the delicate analytic issues). This will permit us to express the partial automorphic $L$-function as a trace, and it has applications to Langlands' program of \textit{beyond endoscopy}. Cf. \cite{Ng12} and Matz's thesis \cite{Matz}. %The work \cite{PS87} by Piatetski-Shapiro for Kloosterman sums is based on a similar idea.

Another direction is pioneered by L.\! Lafforgue \cite{La13} in his search of a kernel for Langlands functoriality, in which the basic function $f_\rho$ is an instance of his functions of $L$-type. The crucial ingredient thereof, the conjectural non-linear Poisson summation formula for certain reductive monoids, is also a prominent part in the work of Braverman and Kazhdan \cite{BK00}. Using the Plancherel formula, however, Lafforgue is able to formulate the relevant Fourier transform in a precise manner. Note that a wider class of representations $\rho$ than Ngô's construction is needed, and he considered quasi-split groups as well.

In all the aforementioned works, there is no description of the basic function without resort to the Satake transform, except in the case $(G,\rho)=(\GL(n),\text{Std})$. This makes it difficult to understand the behaviour of $f_\rho$. In the next paragraph we will see where the obstacle lies: it is related to the decomposition of symmetric power representations and the Kazhdan-Lusztig polynomials.

We recommend the nice survey by Casselman \cite{Cas14} for this circle of ideas.

\paragraph{Our results}
Now we may state our main results. Let $G$, $\det_G$, $\rho$, etc. be as in \eqref{eqn:ses-0}. Choose a suitable Borel pair $(B,T)$ of $G$ and consider the Cartan decomposition $G(F) = K T(F)_+ K$ using the anti-dominant Weyl chamber $X_*(T)_-$ in the cocharacter lattice $X_*(T)$, where $T(F)_+$ is the image of $X_*(T)_-$ under $\mu \mapsto \mu(\varpi)$. Let $\rho_{B^-}$ be the half-sum of negative roots relative to $(B,T)$. The homomorphism $\det_G: G \twoheadrightarrow \Gm$ induces a homomorphism $\det_G: X_*(T) \twoheadrightarrow \Z$.

The basic function is determined by its restriction to $T(F)_+$. We shall introduce an indeterminate $X$ in place of $q_F^{-s}$ and write
\begin{align*}
  f_{\rho, X} & = \sum_{\mu \in X_*(T)_-} c_\mu(q_F) q_F^{-\angles{\rho_{B^-}, \mu}} \mathbbm{1}_{K\mu(\varpi)K} \cdot  X^{\det_G(\mu)} \\
  & = \sum_{\mu \in X_*(T)_-} c_\mu(q_F) \delta_{B^-}^{\frac{1}{2}}(\mu(\varpi)) \mathbbm{1}_{K\mu(\varpi)K} \cdot  X^{\det_G(\mu)}
\end{align*}
instead of $f_{\rho,s} = f_\rho |\det_G|_F^s$. Here $c_\mu(q_F)$ are certain polynomials in $q_F^{-1}$. Let $\leq$ be the Bruhat order relative to the opposite Borel subgroup $B^-$. For every $\lambda \in X_*(T)_-$, let $V(\lambda)$ denote the irreducible representation of $\hat{G}$ of highest weight $\lambda$. Then $c_\mu(q) \in \Z[q^{-1}]$ is given by
\begin{gather*}
  c_\mu(q) := \begin{cases}\displaystyle
    \sum_{\substack{\lambda \in X_*(T)_- \\ \lambda \geq \mu}} q^{-\angles{\rho_{B^-}, \lambda-\mu}} P_{n_\mu, n_\lambda}(q) \mult(\Sym^{\det_G(\mu)} \rho : V(\lambda)), & \text{if } \det_G(\mu) \geq 0, \\
    0, & \text{otherwise}.
  \end{cases}
\end{gather*}
Here $P_{n_\mu, n_\lambda}(q)$ are some Kazhdan-Lusztig polynomials: they appear in the Kato-Lusztig formula (Theorem \ref{prop:Kato-Lusztig}) for $\mathcal{S}^{-1}$.

In particular, $c_\mu(q)$ has non-negative integral coefficients. Also note that $P_{n_\mu, n_\lambda}(q)$ reduces to weight-multiplicities when $q=1$. These properties already imply some easy estimates for $c_\mu(q_F)$ upon passing to the ``classical limit'' $q=1$ (see \S\ref{sec:estimates}). Nonetheless, for the study of the structural properties of the basic function $f_{\rho, X}$, we need more.

The formula for $c_\mu(q)$ above suggests that one has to understand
\begin{enumerate}
  \item the decomposition of $\Sym^k\rho$ into irreducibles, for all $k$;
  \item the Kazhdan-Lusztig polynomial $P_{n_\mu,n_\lambda}(q)$ for infinitely many $\lambda$, since $\Sym^k \rho$ produces infinitely many irreducible constituents as $k$ varies.
\end{enumerate}
Both tasks are daunting, but their combination turns out to have a nice interpretation. We shall make use of \textit{invariant theory} for the dual group $\hat{G}$, for want of anything better. Suppose $\lambda, \mu \in X_*(T)_-$. Set $K_{\lambda, \mu}(q) = q^{\angles{\rho_{B^-}, \lambda-\mu}} P_{n_\mu, n_\lambda}(q^{-1})$. One can show that $K_{\lambda, \mu}(q) \in \Z_{\geq 0}[q]$, known as Lusztig's $q$-analogue of weight-multiplicities or the Kostka-Foulkes polynomial, for the based root datum of $\hat{G}$. An argument due to Hesselink (see \cite{Bry89}) expresses $K_{\lambda, \mu}(q)$ as the Poincaré series in $q$ of the vector space
$$ \Hom_{\hat{G}} \left( V(\lambda), \Gamma\left( \hat{G} \utimes{\hat{B}} \hat{\mathfrak{u}}, \; p^* \mathscr{L}_{\hat{G}/\hat{B}}(\mu) \right) \right), $$
which is endowed with the $\Z_{\geq 0}$-grading coming from the dilation action of $\Gm$ along the fibers of $p: \hat{G} \utimes{\hat{B}} \hat{\mathfrak{u}} \to \hat{G}/\hat{B}$. Note that the homogeneous fibration $\hat{G} \utimes{\hat{B}} \hat{\mathfrak{u}} \to \hat{G}/\hat{B}$ may also be identified with the cotangent bundle $T^*(G/B)$. Plugging this into the formula for $c_\mu(q^{-1})$, we arrive at the Poincaré series of
$$ \Hom_{\hat{G}} \left( \Sym V, \Gamma\left( \hat{G} \utimes{\hat{B}} \hat{\mathfrak{u}}, \; p^* \mathscr{L}_{\hat{G}/\hat{B}}(\mu) \right) \right), $$
or equivalently, $(\C[V] \otimes \Gamma(\cdots))^{\hat{G}}$. Constructions of this type were familiar to the invariant theorists in the  nineteenth century, known as the space of \textit{covariants}.

In order to study $f_{\rho, X}$, we form the formal power series
$$ \mathbf{P} := \sum_{\mu \in X_*(T)_-} c_\mu(q^{-1}) e^\mu X^{\det_G(\mu)} $$
and interpret it as a multi-graded Poincaré series of the space of invariants $\mathcal{Z}^{\hat{G}}$ of some infinite-dimensional $\C$-algebra $\mathcal{Z}$ with $\hat{G} \times \hat{T} \times \Gm^2$-action. Here $\hat{T}$ (resp. $\Gm^2$) is responsible for the grading corresponding to the variable $\mu$ (resp. by $(X,q)$) in the Poincaré series. This is our Theorem \ref{prop:basicfun-gen}.

As a byproduct, we get another description of the coefficients $c_\mu(q^{-1}) \in \Z[q]$ of $f_{\rho, X}$: for $\mu \in X_*(T)_-$, they equal $q^{-\det_G(\mu)}$ times the \textit{generalized Kostka-Foulkes polynomials} $m^\mu_{0, \Psi}(q)$ defined à la Panyushev \cite{Pan10}, attached to the data
\begin{compactitem}
  \item the Borel subgroup $\hat{B}$ of $\hat{G}$;
  \item $N = V \oplus \hat{\mathfrak{u}}$, as a representation of $\hat{B}$ where $\hat{\mathfrak{u}}$ is acted upon by the adjoint representation;
  \item $\Psi$ is the set with multiplicities of the $\hat{T}$-weights of $N$.
\end{compactitem}

These polynomials are defined in a purely combinatorial way: for $\lambda \in X_*(T)_-$ and $\mu \in X_*(T)$, set
$$ m^\mu_{\lambda, \Psi}(q) := \sum_{w \in W} (-1)^{\ell(w)} \mathcal{P}_\Psi \left( w(\lambda + \check{\rho}_{B^-}) - (\mu + \check{\rho}_{B^-}); q \right) $$
where $\mathcal{P}_\Psi$ is a $q$-analogue of Kostant's partition function, defined by
\begin{gather}
  \prod_{\alpha \in \Psi} (1 - qe^{-\alpha}) = \sum_{\nu \in X_*(T)} \mathcal{P}_\Psi(\nu; q) e^\nu.
\end{gather}

For $N = \hat{\mathfrak{u}}$ we recover $K_{\lambda, \mu}(q)$. This gives a surprising combinatorial formula for $c_\mu(q^{-1})$, but it is ill-suited for computation. See \S\ref{sec:com-setup} for details.

One can regard $\mathbf{P}$ as a generating function for $c_\mu(q^{-1})$. Generating functions are most useful when they are \textit{rational}, and this becomes clear in the invariant-theoretic setup: $\mathcal{Z}$ is identified with the coordinate ring of an affine variety $\mathcal{Y}$, thereby showing the rationality of $\mathbf{P}$. This is our Theorem \ref{prop:c_mu-rat-fun}, the main qualitative result of this article. In fact $\mathbf{P}$ takes the form
$$
   \frac{Q}{\prod_{i=1}^s (1 - q^{d_i} e^{\mu_i} X^{\det_G(\mu_i)})}
$$
for some $s$, where $d_i \geq 0$, $\mu_i \in X_*(T)_-$, $\det_G(\mu_i) \geq 0$, and $Q$ is a $\Z$-linear combination of monomials of the form $e^\mu X^{\det_G(\mu)} q^d$.

Broer \cite{Bro93} employed a similar strategy to study $K_{\lambda, \mu}(q)$ for semisimple groups.

For the explicit determination of the rational Poincaré series $\mathbf{P}$ of $\mathcal{Z}^{\hat{G}}$, i.e. the determination of $f_{\rho, X}$, we have to know the homogeneous generators (or: the $d_i$, $\mu_i$ in the denominator) and the syzygies (or: the numerator $Q$) thereof. In general, this seems to be a difficult problem of invariant theory. In this article we accomplish this only for $(\GL(n), \text{Std})$ in \S\ref{sec:GJ}. Conceivably, tools from computational invariant theory might have some use here.

Our results might also shed some light on the definition of Schwartz spaces for the monoids arising from Ngô's recipe. Nevertheless, the monoid-theoretic aspects are deliberately avoided in this article. Hopefully they will be treated in subsequent works.

\subsection*{Postscript}
In an earlier version, the Brylinski-Kostant filtration \cite{Bry89} of $V$ was used to interpret $\mathbf{P}$ as a Poincaré series. Due to some careless manipulations of filtered vector spaces under tensor products and confusions about the definition of Brylinski-Kostant filtrations, the resulting formula is wrong except in the $(\GL(n), \text{Std})$ case. I would like to thank Professor Casselman for pointing this out.

\subsection*{Acknowledgements}
I am grateful to Bill Casselman, Laurent Lafforgue and Chung Pang Mok for helpful conversations. In fact, my interest in this topic is initiated by the lectures of Casselman and C.\! P.\! Mok in Beijing, 2013. The \S\ref{sec:GSp4} on $\GSp(4)$ is heavily influenced by Casselman's lectures. My thanks also go to Nanhua Xi for his unquestionable expertise in Kazhdan-Lusztig theory.

\subsection*{Organization of this article}
In \S\ref{sec:Satake-review} we collect the basic properties of the Satake isomorphism for split groups, including the description of its inverse in terms of the Kazhdan-Lusztig polynomials. We also introduce an easy yet handy generalization, namely the almost compactly supported version of the Satake isomorphism.

In the subsequent sections we switch to the framework of anti-dominant weights. It makes the invariant-theoretic results in \S\ref{sec:gKF} much cleaner.

We revert to the harmonic analysis for $p$-adic groups in \S\ref{sec:L-basic}. We review Ngô's recipe, the definition for the basic functions $f_{\rho,X}$ (as well as their specializations $f_{\rho}, f_{\rho,s}$), and their relation to unramified local $L$-factors. The coefficients of the basic functions are explicitly expressed in terms of
\begin{inparaenum}[(i)]
  \item the Kazhdan-Lusztig polynomials, and
  \item the multiplicities in the decomposition of symmetric power representations.
\end{inparaenum}
Then we give several estimates for $f_{\rho, s}$ by passing to the ``classical limit'', namely by specialization to $q=1$. For example, $f_{\rho,s}$ is shown to be tempered in the sense of Harish-Chandra when $\Re(s) \geq 0$

In order to study qualitative behaviour of basic functions, we set up the combinatorial and geometric formalism for the generalized Kostka-Foulkes polynomials in \S\ref{sec:gKF}, following Panyushev \cite{Pan10}. The upshot is the interpretation of these polynomials as certain multi-graded Poincaré series. Unlike \cite{Bry89,Bro93,Pan10}, we allow connected reductive groups and use anti-dominant weights in our exposition. Complete proofs will be given in order to dispel any doubt.

In \S\ref{sec:gKF-basic}, we interpret the coefficients $c_\mu(q^{-1})$ of the basic function as certain generalized Kostka-Foulkes polynomials $m^\mu_{0,\Psi}(q)$. We are then able to encode those $c_\mu(q^{-1})$ into a rational Poincaré series; this is largely based on the arguments in \cite{Bro93}.

In \S\ref{sec:examples}, as a reality check, we first consider the Tamagawa-Godement-Jacquet construction for the standard $L$-factor of $\GL(n)$. We recover their function $\mathbbm{1}_{\text{Mat}_{n \times n}}$ as $f_{\text{Std},-\frac{n-1}{2}}$. In the case of the spinor $L$-factor for $\GSp(4)$, we compute the based root datum and the weights of the standard representation of $\GSp(4,\C)$ explicitly. It turns out that from the viewpoint of $L$-factors, our basic function is ``more basic'' then the one considered by Satake and Shimura. Unfortunately we are not yet able to determine $f_\rho$ completely in the $(\GSp(4), \text{spin})$ case.

\subsection*{Conventions}
\paragraph{Local fields}
Throughout this article, $F$ always denotes a non-archimedean local field. Denote its ring of integers by $\mathfrak{o}_F$, and choose a uniformizer $\varpi$ of the maximal ideal of $\mathfrak{o}_K$. Set $q_F := |\mathfrak{o}_F/(\varpi)|$, the cardinality of the residue field of $F$.

Denote the normalized valuation of $F$ by $\text{val}: F \to \Z \sqcup \{+\infty\}$. The normalized absolute value of $F$ is $|\cdot|_F := q_F^{-\text{val}(\cdot)}: F \to \Q$.

\paragraph{Groups and representations}
Let $\Bbbk$ be a commutative ring with $1$. For a $\Bbbk$-group scheme $G$, the group of its $\Bbbk$-points is denoted by $G(\Bbbk)$. The algebra of regular functions on $G$ is denoted by $\Bbbk[G]$. Assume henceforth that $\Bbbk$ is a field, the center of $G$ will then be denoted by $Z_G$. When $\Bbbk$ is algebraically closed, the algebraic groups over $\Bbbk$ are identified with their $\Bbbk$-points.

The derived group of $G$ is denoted by $G_\text{der}$. Now assume $G$ to be connected reductive. The simply connected cover of $G_\text{der}$ is denoted by $G_\text{SC} \twoheadrightarrow G_\text{der}$. We denote the adjoint group of $G$ by $G_\text{AD}$, equipped with the homomorphism $G \twoheadrightarrow G_\text{AD}$. For every subgroup $H$ of $G$, we denote by $H_\text{sc}$ (resp. $H_\text{ad}$) the preimage of $H$ in $G_\text{SC}$ (resp. image in $G_\text{AD}$). For example, if $T$ is a maximal torus of $G$, then $T_\text{ad}$ is a maximal torus of $G_\text{AD}$.

A Borel pair of $G$ is a pair of the form $(B,T)$ where $B$ is a Borel subgroup and $T \subset B$ is a maximal torus; we shall always assume that $B$ and $T$ are defined over the base field $\Bbbk$. Once a Borel pair $(B,T)$ is chosen, the opposite Borel subgroup $B^-$ is well-defined: it satisfies $B \cap B^- = T$. The Weyl group is denoted by $W := N_G(T)/T$. The longest element in $W$ is denoted by $w_0$. The length function of $W$, or more generally of any extended Coxeter systems, is denoted by $\ell(\cdot)$.

When $\Bbbk = F$ is a non-archimedean local field, the representations of $G(F)$ are always assumed to be smooth and admissible. When $\Bbbk$ is algebraically closed, the representations of $G$ are always assumed to be algebraic and finite-dimensional; in this case we denote the representation ring by $\text{Rep}(G)$. For an algebraic finite-dimensional algebraic representation $(\rho, V)$ of $G$, we may define the set \textit{with multiplicities} of its $T$-weights, denoted as $\Supp(V)$. %The contragredient of a representation $V$ is denoted by $V^\vee$.

The Lie algebra of $G$ (resp. $B$, etc.) is denoted by $\mathfrak{g}$ (resp. $\mathfrak{b}$, etc.) as usual.

If $H$ is a locally compact group, the modulus function $\delta_H: H \to \R_{>0}$ is the character defined by $\mu(h \cdot h^{-1}) = \delta_H(h)\mu(\cdot)$, where $\mu$ is any Haar measure on $H$.

\paragraph{Varieties with group action}
Let $\Bbbk$ be an algebraically closed field and $G$ be an algebraic group over $\Bbbk$. Varieties over $\Bbbk$ are irreducible by convention. By a $G$-variety we mean an algebraic $\Bbbk$-variety $X$ equipped with the action (on the left) morphism $a: G \times X \to X$ such that $a|_{1 \times X} = \identity$ and satisfying the usual associativity constraints. The action will often be abbreviated as $gx = a(g,x)$. We may also talk about morphisms between $G$-varieties, etc.

On a $G$-variety $X$ we have the notion of $G$-linearized quasi-coherent sheaves, that is, a quasi-coherent sheaf $\mathcal{F}$ equipped with an isomorphism $\text{pr}_2^* \mathcal{F} \rightiso a^* \mathcal{F}$ inducing a $G$-action on the set of local sections, where $\text{pr}_2: G \times X \to X$ is the second projection. A $G$-linearization of $\mathcal{F}$ induces $G$-actions on the cohomology groups $H^i(X, \mathcal{F})$, for every $i$. See \cite[Appendix C]{Ti11} for details.

We will occasionally deal with hypercohomology of complexes of sheaves on $X$; the relevant notations will be self-evident.

\paragraph{Combinatorics}
Always fix a base field $\Bbbk$. For a $\Bbbk$-torus $T$, we write $X^*(T) := \Hom(T,\Gm)$, $X_*(T) := \Hom(\Gm, T)$ where the $\Hom(\cdots)$ is taken in the category of $\Bbbk$-tori and $\Gm$ denotes the multiplicative $\Bbbk$-group scheme. We identify $\Z$ with $\Hom(\Gm, \Gm)$ by associating $k$ to the homomorphism $z \mapsto z^k$. Then the composition of homomorphisms gives a duality pairing $\angles{\cdot, \cdot}: X^*(T) \otimes X_*(T) \to \Z$.

Let $G$ be a split connected reductive $\Bbbk$-group with a Borel pair $(B,T)$. The Weyl group $W$ acts on $X^*(T)$ and $X_*(T)$ so that $\angles{\cdot,\cdot}$ is $W$-invariant. Define
\begin{compactitem}
  \item $\Delta_B$: the set of simple roots relative to $B$,
  \item $\Delta_B^\vee$: the set of simple coroots relative to $B$,
  \item $\Sigma_B$: the set of positive roots relative to $B$,
  \item $\Sigma_B^\vee$: the set of positive coroots relative to $B$,
  \item $\rho_B$: the half sum of the elements of $\Sigma_B$,
  \item $\check{\rho}_B$: the half sum of the elements of $\Sigma_B^\vee$.
\end{compactitem}

Note that $\Sigma_B \subset X^*(T)$. For each $\alpha \in \Sigma_B$, the corresponding coroot is denoted by $\alpha^\vee \in X_*(T)$. The Bruhat order on $X^*(T)$ relative to $B$ is defined by $\lambda_1 \leq \lambda_2$ if and only if $\lambda_2 = \lambda_1 + \sum_{\alpha \in \Delta_B} n_\alpha \alpha$ with $n_\alpha \geq 0$ for all $\alpha$. Similarly, the Bruhat order on $X_*(T)$ relative to $B$ is defined by the requirement that $\mu_1 \leq \mu_2$ if and only if $\mu_2 = \mu_1 + \sum_{\alpha \in \Delta_B} n_\alpha \alpha^\vee$ for non-negative $n_\alpha$.

The dominant cone in $X_*(T)$ relative to $B$ is defined as
$$ X_*(T)_+ := \{ \mu \in X_*(T) : \angles{\alpha,\mu} \geq 0 \text{ for all } \alpha \in \Delta_B \}. $$
Likewise, using $B^-$ one defines the anti-dominant cone
$$ X_*(T)_- := \{ \mu \in X_*(T) : \angles{\alpha,\mu} \geq 0 \text{ for all } \alpha \in \Delta_{B^-} \}. $$

Set $X_*(T)_\R := X_*(T) \otimes_\Z \R$, etc. The cones $X_*(T)_{\R,\pm}$ are defined as before. Similarly, the dominant and anti-dominant cones in $X^*(T)$ are defined using coroots $\alpha^\vee$ instead of $\alpha$.

\paragraph{Dual groups}
Always assume $G$ split. The dual group of $G$ in the sense of Langlands will be denoted by $\hat{G}$; it is defined over an algebraically closed field $\Bbbk$ of characteristic zero, usually $\Bbbk=\C$. In this article, the group $G$ will always be equipped with a Borel pair $(B,T)$. Therefore $\hat{G}$ is equipped with the dual Borel pair $(\hat{B},\hat{T})$. Write the based root datum of $G$ as $(X^*(T), \Delta_B, X_*(T), \Delta_B^\vee)$, the based root datum of $\hat{G}$ is given by
$$ (X^*(\hat{T}), \Delta_{\hat{B}}, X_*(\hat{T}), \Delta_{\hat{B}}^\vee) = (X_*(T), \Delta_B^\vee, X^*(T), \Delta_B). $$

For the dual group $\hat{G}$, or more generally for a connected reductive group over an algebraically closed field of characteristic zero, the irreducible representations are classified by their highest weights relative to $B$. If $\xi \in X^*(\hat{T})_+ = X_*(T)_+$, the corresponding irreducible representation of $\hat{G}$ is denoted by $V(\xi)$. We will consider the case relative to $B^-$ as well, in which the highest weights belong to $X_*(T)_-$.

The character of a representation $\rho \in \text{Rep}(\hat{G})$ is denoted by $\Tr(\rho)$. It can be identified with its restriction to $\hat{T}$, hence with the $W$-invariant element of the group ring $\Z[X^*(\hat{T})] = \Z[X_*(T)]$ given by
$$ \Tr(\rho) = \sum_{\nu \in X^*(\hat{T})} \dim V_\nu \cdot e^\nu $$
where $V_\mu$ is the $\mu$-weight subspace of $V$. We may also view $\Tr(\rho)$ as a regular function on the variety $\hat{T}/W$.

\paragraph{Miscellany}
For any vector space $V$ over a base field, its linear dual is denoted by $V^\vee$. The symmetric (resp. exterior) $k$-th power is denoted by $\Sym^k V$ (resp. $\bigwedge^k V$). The same notation pertains to representations and sheaves.

The trace of a trace class operator is denoted by $\Tr(\cdots)$.

For any sets $Y \subset X$, we denote by $\mathbbm{1}_Y: X \to \{0,1\}$ the characteristic function of $Y$.

The symmetric group on $k$ letters is denoted by $\mathfrak{S}_k$.

Let $Y$ be a commutative monoid. Its monoid ring is denoted as $\Z[Y]$; for example, taking $Y=\Z_{\geq 0} \cdot q$ furnishes the polynomial ring $\Z[q]$. Write the binary operation of $Y$ additively, the corresponding elements in $\Z[Y]$ are formally expressed in exponential notations: $\{ e^y : y \in Y \}$, subject to $e^{y_1 + y_2} = e^{y_1}e^{y_2}$. In particular $1 = e^0$ is the unit element in $\Z[Y]$. %An element of $\Z[Y]$ will be written in the clumsy notation
%$$ P(\vec{y}) = \sum_{y \in Y} a_y e^y. $$
%For $P$ as above and $\lambda \in \Z_{\geq 0}$, we will write $P(\vec{y}^\lambda) := \sum_{y \in Y} a_y e^{\lambda y}$.

We will use the standard notations $\GL(n)$, $\GSp(2n)$, etc. to denote the general linear groups, symplectic similitude groups, etc. Let $A$ be a ring, the $A$-algebra of $n \times n$-matrices will be denoted by $\text{Mat}_{n \times n}(A)$.

\section{Review of the Satake isomorphism}\label{sec:Satake-review}
The materials here are standard. We recommend the excellent overview \cite{Gr98}.

\subsection{The Satake transform and $L$-factors}\label{sec:Satake-trans}
Consider the following data
\begin{itemize}
  \item $G$: a split connected reductive $F$-group,
  \item $(B,T)$: a Borel pair of $G$ defined over $F$,
  \item $U$: the unipotent radical of $B$,
  \item $\hat{G}$: the dual group over $\C$ of $G$, equipped with the dual Borel pair $(G,T)$.
\end{itemize}

Fix a hyperspecial vertex in the Bruhat-Tits building of $G$ that lies in the apartment determined by $T$. It determines a hyperspecial maximal compact subgroup $K$ of $G(F)$. Note that
$$ K_T := T(F) \cap K $$
is a hyperspecial subgroup of $T(F)$.

Choose the Haar measure on $G(F)$ satisfying $\mes(K)=1$. An admissible smooth representation of $G(F)$ is called $K$-unramified if it contains nonzero $K$-fixed vectors.

Let $T(F)_- \subset T(F)$ be the image of $X_*(T)_+$ under the map $\mu \mapsto \mu(\varpi)$. We have the Cartan decomposition
$$ G(F) = K T(F)_- K. $$

The integral $K$-spherical Hecke algebra $\mathcal{H}(G(F)\sslash K; \Z)$ is defined as the convolution algebra of \textit{compactly supported} functions $K \backslash G(F) / K \to \Z$. It has the $\Z$-basis $\mathbbm{1}_{K \mu(\varpi) K}$ parametrized by $\mu \in X_*(T)_+$. More generally, for every ring $R$, we define $\mathcal{H}(G(F)\sslash K; R)$ by considering bi-invariant functions under $K$ with values in $R$. Equivalently,
$$ \mathcal{H}(G(F)\sslash K; R) = \mathcal{H}(G(F)\sslash K; \Z) \otimes_\Z R . $$

The same definitions also apply to $T(F)$ with respect to $K_T$. The Weyl group $W$ acts on $\mathcal{H}(T(F)\sslash K_T; R)$.

\begin{definition}[I.\! Satake \cite{Sat63}]
  Let $R$ be a $\Z[q_F^{\pm \frac{1}{2}}]$-algebra. The \textit{Satake isomorphism} is defined as the homomorphism between $R$-algebras
  \begin{align*}
    \mathcal{S}: \mathcal{H}(G(F)\sslash K; R) & \stackrel{\sim}{\longrightarrow} \mathcal{H}(T(F)\sslash  K_T; R)^W, \\
    f & \longmapsto \left[ t \mapsto \delta_B(t)^{\frac{1}{2}} \int_{U(F)} f(tu) \dd u \right],
  \end{align*}
  where $U(F)$ is equipped with the Haar measure such that $\mes(U(F) \cap K) = 1$. Also recall that $\delta_B^{1/2}: \mu(\varpi) \mapsto q_F^{-\angles{\rho_B, \mu}}$ takes value in $R$ and factors through $K_T$.
\end{definition}

Note that $X_*(T) \rightiso T(F)/K_T$ by $\mu \mapsto \mu(\varpi)$, by which we have $\mathcal{H}(T(F)\sslash  K_T; \Z)^W = \Z[X_*(T)]^W $. On the other hand, it is well-known that $\text{Rep}(\hat{G}) = \Z[X^*(\hat{T})]^W = \Z[X_*(T)]^W$: to each representation $\rho \in \text{Rep}(\hat{G})$ we attach its character $\Tr(\rho) \in \Z[X_*(T)]^W$.

We will be mainly interested in the usual $K$-spherical Hecke algebra
$$ \mathcal{H}(G(F)\sslash K) := \mathcal{H}(G(F)\sslash K; \C). $$
Elements of $\mathcal{H}(T(F)\sslash  K_T)^W = \text{Rep}(\hat{G}) \otimes_\Z \C$ can then be identified as regular functions on the $\C$-variety $\hat{T}/W$. In this setting, $\mathcal{S}$ establishes bijections between
\begin{inparaenum}[(i)]
  \item the classes $c \in \hat{T}/W$,
  \item the $1$-dimensional representations of the algebra $\mathcal{H}(T(F)\sslash K_T)^W = \C[X_*(T)]^W$;
  \item the isomorphism classes of $K$-unramified irreducible representations of $G(F)$.
\end{inparaenum}
Let $\pi_c$ be the $K$-unramified representation corresponding to a class $c$, then the bijection is characterized by
$$ \Tr\;\pi_c(f) = \mathcal{S}(f)(c), \quad \varphi \in \mathcal{H}(G(F)\sslash K). $$

On the other hand, $c$ can be identified with a unramified character $\chi_c$ of $T(F)$ (unique up to $W$-action). Denote the normalized parabolic induction of $\chi_c$ from $B$ as $I_B(\chi_c)$. Then $\pi_c$ can also be characterized as the $K$-unramified constituent of $I_B(\chi_c)$. Note that $I_B(\chi_c)$ is irreducible for $c$ in general position. It follows that the Satake isomorphism $\mathcal{S}$ does not depend on the choice of $B$.

We say that $c \in \hat{T}/W$ is the Satake parameter of the $K$-unramified irreducible representation $\pi$ if $\pi = \pi_c$.

Now comes the unramified $L$-factor. Fix a representation
$$ \rho: \hat{G} \to \GL(V,\C) $$
of the dual group $\hat{G}$ over $\C$. Let $c \in \hat{T}/W$. It can also be viewed as an element in the adjoint quotient of  $\hat{G}$ by Chevalley's theorem.

\begin{definition}
  Introduce an indeterminate $X$. The unramified local $L$-factor attached to $\pi_c$ and $\rho$ is defined by
  $$ L(\pi_c, \rho, X) := \det(1 - \rho(c)X|V)^{-1} \quad \in \C(X). $$

  The usual $L$-factors are obtained by specializing $X$, namely
  $$ L(s, \pi_c, \rho) := L(\pi_c, \rho, q_F^{-s}), \quad s \in \C,$$
  which defines a rational function in $q_F^{-s}$. In what follows we will omit the underlying spaces $V$, etc. in the traces.

  The following alternative description is well-known: see \cite[(2.6)]{Mc95}
  \begin{gather}\label{eqn:L-Sym}
    L(\pi_c, \rho, X) = \left[ \sum_{i=0}^{\dim V} (-1)^i \Tr\left( \bigwedge^i \rho(c)\right) X^i \right]^{-1} = \sum_{k \geq 0} \Tr\left( \Sym^k \rho(c) \right) X^k \quad \in \C\llbracket X\rrbracket.
  \end{gather}
\end{definition}

\subsection{Inversion via Kazhdan-Lusztig polynomials}\label{sec:Satake-inversion}
We set out to state the Kato-Lusztig formula for the inverse of $\mathcal{S}$. The basic references are \cite{Lu83,Ka82}; a stylish approach can be found in \cite{HKP10}.

Let us introduce an indeterminate $q$. For any $\nu \in X_*(T)$, we define $\mathcal{P}(\nu; q) \in \Z[q]$ by
$$ \prod_{\alpha \in \Sigma_B} (1 - qe^{\check{\alpha}})^{-1} = \sum_{\nu \in X_*(T)} \mathcal{P}(\nu; q) e^\nu. $$

For $\lambda, \mu \in X_*(T)$, define
\begin{gather}\label{eqn:K}
  K_{\lambda, \mu}(q) := \sum_{w \in W} (-1)^{\ell(w)} \mathcal{P}(w(\lambda + \check{\rho}_B) - (\mu + \check{\rho}_B); q) \quad \in \Z[q].
\end{gather}
Kostant's partition function is recovered by taking the ``classical limit'' $q = 1$. For this reason it is called Lusztig's $q$-analogue.

At this point, it is advisable to clarify the relation between $K_{\lambda, \mu}(q)$ and certain Kazhdan-Lusztig polynomials, since the latter appear frequently in the relevant literature. Consider the extended affine Weyl group $\tilde{W} := X_*(T) \rtimes W$, regarded as a group of affine transformations on $X_*(T) \otimes_\Z \R$. As is well-known, the standard theory of affine Coxeter groups carries over to this setting. We will be sketchy at this point. First, recall the definition of the set of simple affine roots $S_\text{aff} = S \sqcup \{s_0\}$. Here $S \subset W$ is the set of simple root reflections determined by $B$ and $s_0$ is the reflection whose fixed locus is defined by $\angles{\tilde{\alpha}, \cdot}=1$, with $\tilde{\alpha}$ being the highest root.

One can then write $\tilde{W} = W_\text{aff} \rtimes \Omega$ where $(W_\text{aff}, S_\text{aff})$ is an authentic affine Coxeter system and $\Omega$ is the normalizer of $S_\text{aff}$. The length function $\ell$ and the Bruhat order $\leq$ can then be extended to $\tilde{W}$ by stipulating
\begin{itemize}
  \item $\ell(wz)=\ell(w)$ for $w \in W_\text{aff}$, $z \in \Omega$;
  \item $wz \leq w'z'$ if and only if $w \leq w' \in W$, $z=z' \in \Omega$.
\end{itemize}
%Note that $\Omega$ is trivial if and only if $G$ is of adjoint type.

The \textit{Kazhdan-Lusztig polynomials} $P_{w,w'}(q) \in \Z[q]$ (see \cite{KL79}) can also be defined on $\tilde{W}$: we have $P_{wz,w'z'} \neq 0$ only when $z=z' \in \Omega$, and $P_{wz,w'z} = P_{w,w'}$.

For every $\mu \in X_*(T)$, there exists a longest element $n_\mu$ in $W \mu W$; in fact it equals $w_0 \mu$.

\begin{theorem}[{\cite[Theorem 1.8]{Ka82}}]\label{prop:K-P}
  Let $\mu, \lambda \in X_*(T)_+$ such that $\lambda \geq \mu$. Then
  $$ K_{\lambda, \mu}(q) = q^{\angles{\rho_B, \lambda-\mu}} P_{n_\mu, n_\lambda}(q^{-1}). $$
\end{theorem}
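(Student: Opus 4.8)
The plan is to pass from the $K$-spherical Hecke algebra to the affine Hecke algebra $\mathcal{H}_I = \mathcal{H}(G(F)\sslash I; \Z[q_F^{\pm 1/2}])$ of an Iwahori subgroup $I \subset K$, where both sides of the asserted identity acquire a uniform meaning, and then to compare two change-of-basis matrices through a Hecke-algebraic form of the Satake isomorphism. First I would recall the two natural bases of $\mathcal{H}_I$, the Iwahori--Matsumoto basis $\{T_w\}_{w\in\tilde{W}}$ and the Kazhdan--Lusztig basis $\{C_w\}_{w\in\tilde{W}}$, whose transition is recorded by the $P_{w,w'}$; via Bernstein's presentation $\mathcal{H}_I$ contains $\Z[q_F^{\pm1/2}][X_*(T)]$, carries the finite Hecke subalgebra $\mathcal{H}_W$ with its symmetrizing idempotent $e_K$, and $e_K\mathcal{H}_I e_K = \mathcal{H}(G(F)\sslash K;\Z[q_F^{\pm1/2}])$, the Bernstein isomorphism $e_K\mathcal{H}_I e_K \cong \Z[q_F^{\pm1/2}][X_*(T)]^W = \text{Rep}(\hat{G})\otimes\Z[q_F^{\pm1/2}]$ recovering $\mathcal{S}$ up to the standard $q_F^{1/2}$-twists (see \cite{HKP10}). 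Equivalently one works with the spherical module $\mathcal{M} := \mathcal{H}_I e_K$, whose standard basis is indexed by the minimal-length representatives of $\tilde{W}/W \cong X_*(T)$ and whose Kazhdan--Lusztig basis relates to it by the $P_{w,w'}$ — and, since the elements $n_\mu = w_0\mu$ are exactly the ones surviving the projection $e_K$ as $\mu$ ranges over $X_*(T)_+$, by the particular polynomials $P_{n_\mu,n_\lambda}$.

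The core is to compute one transition matrix two ways. The deep input (Lusztig \cite{Lu83}, resting on the purity of the intersection cohomology of affine Schubert varieties, i.e. geometric Satake) is that, under the Satake/Bernstein isomorphism, the image of $C_{n_\lambda}$ in the spherical Hecke algebra is a normalized multiple of $\mathcal{S}^{-1}(\Tr V(\lambda))$. Granting this, I would expand $e_K C_{n_\lambda} e_K$ in the $T_{n_\mu}$'s, whose coefficients are $\pm q_F^{\pm\angles{\rho_B,\lambda-\mu}}P_{n_\mu,n_\lambda}(q_F^{-1})$ by the self-dual normalization of the Kazhdan--Lusztig basis and the length identity $\ell(n_\lambda)-\ell(n_\mu) = \angles{2\rho_B,\lambda-\mu}$, convert each $e_K T_{n_\mu} e_K$ to $\mathbbm{1}_{K\mu(\varpi)K}$ up to a power of $q_F$ coming from $\mes(K\mu(\varpi)K)$, and read off the coefficients of $\mathcal{S}^{-1}(\Tr V(\lambda))$ in the basis $\{\mathbbm{1}_{K\mu(\varpi)K}\}$. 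Independently, the same coefficients come from Macdonald's formula for $\mathcal{S}(\mathbbm{1}_{K\mu(\varpi)K})$ (cf. \cite{Mc95}): writing $\mathcal{S}^{-1}(\Tr V(\lambda)) = \sum_\mu a_{\lambda,\mu}(q_F)\,\mathbbm{1}_{K\mu(\varpi)K}$, applying $\mathcal{S}$, and combining Macdonald's formula with the Weyl character formula identifies $a_{\lambda,\mu}(q_F)$ with $q_F^{-\angles{\rho_B,\lambda-\mu}}$ times the alternating sum \eqref{eqn:K} defining $K_{\lambda,\mu}(q_F)$; the expansion is unique because the matrix is unitriangular for $\le$, and $a_{\lambda,\mu}=0$ unless $\mu\le\lambda$ in $X_*(T)_+$. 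Equating the two expressions for $a_{\lambda,\mu}$ and tracking the $q_F^{1/2}$-powers gives $K_{\lambda,\mu}(q) = q^{\angles{\rho_B,\lambda-\mu}}P_{n_\mu,n_\lambda}(q^{-1})$.

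The hard part is the geometric input — that the Kazhdan--Lusztig basis of the spherical module matches the character basis under Satake — which I would not reprove but cite (\cite{Ka82,Lu83,HKP10}); it is ultimately intersection-cohomology purity for affine Schubert varieties. The remaining difficulty is pure bookkeeping: reconciling normalizations (the $q_F^{\pm1/2}$ twists inside $e_K$ and $\mathcal{S}$, the volume factors $\mes(K\mu(\varpi)K)$, and the self-dual versus one-sided normalization of the Kazhdan--Lusztig basis) and conventions ($\rho_B$ versus $\rho_{B^-}$, dominant versus anti-dominant cones, and the dictionary $n_\mu = w_0\mu$ between $\tilde{W}/W$-cosets and $(W,W)$-double cosets). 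A fully combinatorial alternative would replace this comparison by matching Brylinski-type recursion operators for $K_{\lambda,\mu}(q)$ against the standard $C_w$-recursion for $P_{n_\mu,n_\lambda}(q)$ with the common base case $\lambda=\mu$, but checking that the two recursions coincide is no easier than the route above.
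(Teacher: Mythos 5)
The paper gives no proof of this statement---it is quoted verbatim from Kato \cite[Theorem 1.8]{Ka82}---and your sketch is essentially the argument of the cited sources: Lusztig's identification of the Kazhdan--Lusztig basis element $C_{n_\lambda}$ of the spherical module with $\mathcal{S}^{-1}(\Tr V(\lambda))$ \cite{Lu83}, played off against the expansion of the same element obtained from Macdonald's formula and the Weyl character formula, exactly as in \cite{Ka82}. The outline is sound; the only slip is on the Macdonald side, where the coefficient of $\mathbbm{1}_{K\mu(\varpi)K}$ in $\mathcal{S}^{-1}(\Tr V(\lambda))$ should come out as $q_F^{-\angles{\rho_B,\mu}}K_{\lambda,\mu}(q_F^{-1})$ (equivalently $q_F^{-\angles{\rho_B,\lambda}}P_{n_\mu,n_\lambda}(q_F)$, as in Theorem \ref{prop:Kato-Lusztig}), not $q_F^{-\angles{\rho_B,\lambda-\mu}}K_{\lambda,\mu}(q_F)$---a normalization matter of precisely the kind you already defer to bookkeeping.
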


Now we can invert the Satake isomorphism. Given $\lambda \in X_*(T)_+ = X^*(\hat{T})_+$, recall that $V(\lambda)$ denotes the irreducible representation of $\hat{G}$ of highest weight $\lambda$ relative to $B$, and the character $\Tr\,V(\lambda)$ is regarded as an element of $\Z[X_*(T)]^W = \mathcal{H}(T(F)\sslash K_T; \Z)^W$.

\begin{theorem}[{\cite[(3.5)]{Ka82}} or {\cite[Theorem 7.8.1]{HKP10}}]\label{prop:Kato-Lusztig}
  Let $R$ be a $\Z[q_F^{\pm \frac{1}{2}}]$-algebra and $\lambda \in X_*(T)_+ = X^*(\hat{T})_+$, then
  \begin{align*}
    \Tr V(\lambda) &= \sum_{\substack{\mu \in X_*(T)_+ \\ \mu \leq \lambda}} q_F^{-\angles{\rho_B, \mu}} K_{\lambda,\mu}(q_F^{-1}) \mathcal{S}(\mathbbm{1}_{K\mu(\varpi)K}) \\
    & = \sum_{\substack{\mu \in X_*(T)_+ \\ \mu \leq \lambda}} q_F^{-\angles{\rho_B, \lambda}} P_{n_\mu, n_\lambda}(q_F) \mathcal{S}(\mathbbm{1}_{K\mu(\varpi)K}).
  \end{align*}
  as elements of $\mathcal{H}(T(F)\sslash K_T; R)^W$.
\end{theorem}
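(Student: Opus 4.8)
The plan is to combine two classical ingredients: Macdonald's explicit formula for the Satake transform of the Cartan basis $\{\mathbbm{1}_{K\mu(\varpi)K} : \mu\in X_*(T)_+\}$, and the transition formula expressing the irreducible characters of $\hat G$ in terms of Hall--Littlewood polynomials, whose coefficients are the Kostka--Foulkes polynomials. One first reduces to a universal statement: both sides lie in $\mathcal{H}(T(F)\sslash K_T;R)^W \cong R\otimes_{\Z[q_F^{\pm1/2}]}\Z[q_F^{\pm1/2}][X_*(T)]^W$ and all coefficients that occur belong to $\Z[q_F^{\pm1/2}]$ (for the second displayed line this uses Theorem~\ref{prop:K-P}; half-powers are needed only because $\angles{\rho_B,\mu}$ may lie in $\tfrac12\Z$), so it suffices to prove the identity with $q_F$ replaced by a formal indeterminate $q$, after which scalars may be extended freely, e.g.\ to $\C(q^{1/2})$.

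First I would recall Macdonald's formula. A direct evaluation of $\int_{U(F)}\mathbbm{1}_{K\mu(\varpi)K}(tu)\dd u$ organized by the Iwasawa decomposition, by induction on semisimple rank (see \cite{Gr98} or \cite[\S7]{HKP10}), gives
\begin{equation*}
  \mathcal{S}(\mathbbm{1}_{K\mu(\varpi)K}) = q_F^{\angles{\rho_B,\mu}}\,P_\mu(q_F^{-1}), \qquad \mu\in X_*(T)_+,
\end{equation*}
where $P_\mu(t)$ is the Hall--Littlewood polynomial attached to $\mu$ for the root datum of $\hat G$ (so the positive roots involved are the coroots $\check\alpha$, $\alpha\in\Sigma_B$), normalized so that the coefficient of $e^\mu$ in $P_\mu(t)$ equals $1$. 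Thus $\mathcal{S}(\mathbbm{1}_{K\mu(\varpi)K})$ and $P_\mu(q_F^{-1})$ differ only by the invertible scalar $q_F^{\angles{\rho_B,\mu}}$; one should of course double-check the conventions ($B$ versus $B^-$, roots versus coroots, $q$ versus $q^{-1}$), but these are harmless.

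Second, I would invoke the transition formula
\begin{equation*}
  \Tr V(\lambda) = \sum_{\substack{\mu\in X_*(T)_+ \\ \mu\leq\lambda}} K_{\lambda,\mu}(q_F^{-1})\,P_\mu(q_F^{-1}),
\end{equation*}
that is, the fact that the matrix expressing the characters $\Tr V(\lambda)$ in the Hall--Littlewood basis is the matrix of Kostka--Foulkes polynomials, together with the identification of the latter with the alternating-sum, $q$-partition-function quantities $K_{\lambda,\mu}(q)$ of \eqref{eqn:K}; this identification is Lusztig's theorem \cite{Lu83}. Substituting $P_\mu(q_F^{-1}) = q_F^{-\angles{\rho_B,\mu}}\mathcal{S}(\mathbbm{1}_{K\mu(\varpi)K})$ yields the first displayed identity of the theorem. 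The second follows from Theorem~\ref{prop:K-P}: since $K_{\lambda,\mu}(q) = q^{\angles{\rho_B,\lambda-\mu}}P_{n_\mu,n_\lambda}(q^{-1})$, one has $q_F^{-\angles{\rho_B,\mu}}K_{\lambda,\mu}(q_F^{-1}) = q_F^{-\angles{\rho_B,\lambda}}P_{n_\mu,n_\lambda}(q_F)$.

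The substance lies in the two cited black boxes, and that is where I expect the real difficulty to be. Macdonald's formula is a genuine (if standard) $p$-adic computation, while Lusztig's identification of the transition coefficients from Hall--Littlewood polynomials to characters with \eqref{eqn:K} is not elementary: it relies either on the affine Hecke algebra in its Bernstein presentation --- Kato's route in \cite{Ka82}, where $\mathcal{S}^{-1}$ is read off the Kazhdan--Lusztig basis elements $C'_{n_\lambda}$ after cutting by the idempotent $e_K$, the passage from $P_{n_\mu,n_\lambda}$ to the $q$-partition-function form being the combinatorial heart --- or on an intricate induction over the poset of dominant weights. In an exposition one quotes these results; what is specific to the present setting is only the bookkeeping that reassembles them into the stated inversion of the Satake isomorphism.
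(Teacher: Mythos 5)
Your outline is correct and is essentially the proof the paper relies on: the paper offers no independent argument for Theorem~\ref{prop:Kato-Lusztig}, citing Kato and \cite{HKP10}, and your two ingredients --- Macdonald's formula $\mathcal{S}(\mathbbm{1}_{K\mu(\varpi)K})=q_F^{\angles{\rho_B,\mu}}P_\mu(q_F^{-1})$ (the same formula the paper itself quotes from \cite{HKP10} in the proof of Proposition~\ref{prop:basic-L}) together with the Lusztig--Kato identification of the Hall--Littlewood transition coefficients with \eqref{eqn:K} --- are precisely the content of those references. The final bookkeeping, deducing the second display from the first via Theorem~\ref{prop:K-P}, is also correct.
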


Since $\text{Rep}(\hat{G}) = \mathcal{H}(T(F)\sslash K_T; \Z)^W$, the Theorem does give the inverse of $\mathcal{S}$.

\begin{remark}
  Theorem \ref{prop:Kato-Lusztig} is sometimes stated under the assumption that $\hat{G}$ is adjoint, so that $\tilde{W} = W_\text{aff}$. The case for split reductive groups is covered in \cite[\S 10]{HKP10}.
\end{remark}

\subsection{Functions of almost compact support}\label{sec:ac}
We record a mild generalization of the Satake isomorphism here. Retain the same assumptions on $G$, $B$, $T$ and $K$. Set
$$ \mathfrak{a}^*_G := \Hom_{\text{alg.grp}}(G, \Gm) \otimes_\Z \R $$
and let $\mathfrak{a}_G$ be its $\R$-linear dual. We define Harish-Chandra's homomorphism $H_G: G(F) \to \mathfrak{a}_G$ (with the same sign convention as in \cite[p.240]{Wa03}) as the homomorphism characterized by
$$ q_F^{-\angles{\chi, H_G(\cdot)}} = |\chi(\cdot)|_F, \quad \chi \in X^*(G). $$
The image of $H_G$ is a lattice in $\mathfrak{a}_G$ denoted by $\mathfrak{a}_{G,F}$. Observe that $H_G$ is zero on $U(F)$ and $K$.

Fix a $\Z[q_F^{\pm \frac{1}{2}}]$-algebra $R$. We denote by $C_c(\mathfrak{a}_{G,F}; R)$ the $R$-module of finitely supported function $\mathfrak{a}_{G,F} \to R$. The following notion of functions of \textit{almost compact support} (abbreviation: ac) is borrowed from Arthur; it will also make sense in the archimedean case.

\begin{definition}
  Given functions $f: G(F) \to R$ and $b \in C_c(\mathfrak{a}_{G,F}; R)$, we write
  $$ f^b(\cdot) := b(H_G(\cdot)) f(\cdot): G(F) \to R. $$
  Define $\varphi^b$ similarly for $\varphi: T(F) \to R$ and $b \in C_c(\mathfrak{a}_{G,F}; R)$. Set
  \begin{align*}
    \mathcal{H}_\text{ac}(G(F)\sslash K; R) & := \left\{ f: G(F) \to R, \forall b \in C_c(\mathfrak{a}_{G,F}; R), \; f^b \in \mathcal{H}(G(F)\sslash K; R) \right\}, \\
    \mathcal{H}_\text{ac}(T(F)\sslash K_T; R) & := \left\{ f: T(F) \to R, \forall b \in C_c(\mathfrak{a}_{G,F}; R), \; f^b \in \mathcal{H}(T(F)\sslash K_T; R) \right\}.
  \end{align*}
\end{definition}

The left and right convolution products endow $\mathcal{H}_\text{ac}(G(F)\sslash K; R)$ (resp. $\mathcal{H}_\text{ac}(T(F)\sslash K_T; R)$) with a $\mathcal{H}(G(F)\sslash K; R)$-bimodule (resp. $\mathcal{H}(T(F)\sslash K_T; R)$-bimodule) structure. The Weyl group $W$ acts on $\mathcal{H}_\text{ac}(T(F)\sslash K_T; R)$ as usual.

\begin{proposition}
  The Satake isomorphism $\mathcal{S}$ extends to an isomorphism between $R$-modules
  $$ \mathcal{S}: \mathcal{H}_\mathrm{ac}(G(F)\sslash K; R) \rightiso \mathcal{H}_\mathrm{ac}(T(F)\sslash K_T; R)^W $$
  characterized by
  $$ \mathcal{S}(f^b) = \mathcal{S}(f)^b $$
  for any $b \in C_c(\mathfrak{a}_{G,F}; R)$ and $f \in \mathcal{H}_\mathrm{ac}(G(F)\sslash K; R)$. Moreover, it respects the $\mathcal{H}(G(F)\sslash K; R)$-bimodule (resp. $\mathcal{H}(T(F)\sslash K_T; R)^W$-bimodule) structures.
\end{proposition}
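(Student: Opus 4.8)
The plan is to use the grading of the spherical Hecke algebras by the lattice $\mathfrak{a}_{G,F}$ coming from the Harish-Chandra map $H_G$; the only substantive input is the fact, already recorded above, that $H_G$ vanishes on $U(F)$ (and on $K$). For $\chi \in \mathfrak{a}_{G,F}$ set $e_\chi := \mathbbm{1}_{\{\chi\}} \in C_c(\mathfrak{a}_{G,F}; R)$ and let $\mathcal{H}(G(F)\sslash K; R)_\chi$ be the $R$-submodule of functions supported on the fibre $H_G^{-1}(\chi)$. Since $H_G$ is a homomorphism trivial on $K$, each fibre is a union of double cosets, $f \mapsto f^{e_\chi}$ is the projection onto $\mathcal{H}(G(F)\sslash K; R)_\chi$, and additivity of $H_G$ makes $\mathcal{H}(G(F)\sslash K; R) = \bigoplus_\chi \mathcal{H}(G(F)\sslash K; R)_\chi$ an $\mathfrak{a}_{G,F}$-graded $R$-algebra. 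The same discussion applies on $T(F)$, and since $H_G$ is $W$-invariant on $T(F)$ it is compatible with passing to $W$-invariants. Unwinding the definitions, $f \colon G(F) \to R$ lies in $\mathcal{H}_\mathrm{ac}(G(F)\sslash K; R)$ precisely when $f^{e_\chi} \in \mathcal{H}(G(F)\sslash K; R)_\chi$ for every $\chi$, so that $\mathcal{H}_\mathrm{ac}(G(F)\sslash K; R) = \prod_\chi \mathcal{H}(G(F)\sslash K; R)_\chi$ and likewise $\mathcal{H}_\mathrm{ac}(T(F)\sslash K_T; R)^W = \prod_\chi \big(\mathcal{H}(T(F)\sslash K_T; R)_\chi\big)^W$; these are the ``graded completions'' of the respective Hecke algebras.

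The next point is that the original $\mathcal{S}$ is a homomorphism of graded algebras. For $f$ in the compactly supported Hecke algebra and $b \in C_c(\mathfrak{a}_{G,F}; R)$, the identity $H_G(tu) = H_G(t)$ for $u \in U(F)$ lets one pull $b(H_G(\cdot))$ outside the integral defining $\mathcal{S}$, giving $\mathcal{S}(f^b) = \mathcal{S}(f)^b$. Taking $b = e_\chi$ shows that $\mathcal{S}$ carries the $\chi$-component into the $\chi$-component, and since $\mathcal{S}$ is already known to be bijective on the total spaces it restricts to isomorphisms $\mathcal{S} \colon \mathcal{H}(G(F)\sslash K; R)_\chi \rightiso \big(\mathcal{H}(T(F)\sslash K_T; R)_\chi\big)^W$ for each $\chi$.

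Taking the product over $\chi$ of these isomorphisms yields the desired $R$-module isomorphism $\mathcal{S} \colon \mathcal{H}_\mathrm{ac}(G(F)\sslash K; R) \rightiso \mathcal{H}_\mathrm{ac}(T(F)\sslash K_T; R)^W$, explicitly $\mathcal{S}(f) = \sum_\chi \mathcal{S}(f^{e_\chi})$ --- a locally finite sum, the summands having pairwise disjoint support. It extends the old $\mathcal{S}$; the relation $\mathcal{S}(f^b) = \mathcal{S}(f)^b$ for general $b$ follows by restricting to the finitely many $\chi \in \mathrm{supp}(b)$; and conversely that relation with $b = e_\chi$ forces $\mathcal{S}(f)^{e_\chi} = \mathcal{S}(f^{e_\chi})$ for all $\chi$, which pins $\mathcal{S}(f)$ down, giving uniqueness. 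For the bimodule assertion I would note that, for $h$ in the compactly supported Hecke algebra and $f \in \mathcal{H}_\mathrm{ac}(G(F)\sslash K; R)$,
$$ (h * f)^{e_\chi} = \sum_{\chi_0 \in H_G(\mathrm{supp}\,h)} h^{e_{\chi_0}} * f^{e_{\chi - \chi_0}}, $$
a finite sum of elements of the compactly supported Hecke algebra, so that $h * f \in \mathcal{H}_\mathrm{ac}(G(F)\sslash K; R)$ and the convolution bimodule structures are indeed defined. Applying $\mathcal{S}$ component-wise and using that $\mathcal{S}$ is an algebra homomorphism on the compactly supported algebra, together with the analogous identity on the $T(F)$ side, one obtains $\mathcal{S}(h*f)^{e_\chi} = \sum_{\chi_0} \mathcal{S}(h)^{e_{\chi_0}} * \mathcal{S}(f)^{e_{\chi-\chi_0}} = \big(\mathcal{S}(h)*\mathcal{S}(f)\big)^{e_\chi}$ for every $\chi$, hence $\mathcal{S}(h*f) = \mathcal{S}(h)*\mathcal{S}(f)$; the case of $f * h$ is identical.

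All of the mathematical content is in the single identity $\mathcal{S}(f^b) = \mathcal{S}(f)^b$ for compactly supported $f$, which is immediate from $H_G|_{U(F)} = 0$; the rest is bookkeeping with the $\mathfrak{a}_{G,F}$-grading. The step I expect to be the main obstacle --- and the one I would write out most carefully --- is the last one: verifying that convolution against a compactly supported function preserves almost compact support and commutes with the a priori infinite decomposition into graded components.
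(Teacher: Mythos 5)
Your proposal is correct and takes essentially the same route as the paper: the whole content is the identity $\mathcal{S}(f^b)=\mathcal{S}(f)^b$ coming from $H_G|_{U(F)}=0$ together with $W$-invariance of $H_G$, and your decomposition of $f$ into the components $f^{e_\chi}$ is precisely the paper's ``partition of unity on $\mathfrak{a}_{G,F}$'' used to characterize (and extend) $\mathcal{S}$. The convolution identity you flag as the main obstacle is exactly what the paper dismisses as the routine verification of the bimodule structures, and your computation of $(h*f)^{e_\chi}$ fills that in correctly.
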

\begin{proof}
  Since $H_G$ is $W$-invariant and $H_G \equiv 0$ on $U(F)$, the integral defining $\mathcal{S}(f)$ still makes sense for $f \in \mathcal{H}_\text{ac}(G(F)\sslash K; R)$ and we have indeed $\mathcal{S}(f^b) = \mathcal{S}(f)^b$. It characterizes $\mathcal{S}(f)$ by an argument of partition of unity on $\mathfrak{a}_{G,F}$. The preservation of bimodule structures is routine to check.
\end{proof}

The inverse of $\mathcal{S}$ in the almost compactly supported setting is given by exactly the same formulas as in Theorem \ref{prop:Kato-Lusztig}. When $R=\C$, we write $\mathcal{H}_\text{ac}(G(F) \sslash K)$, etc.

Note that elements in $\mathcal{H}_\text{ac}(T(F)\sslash K_T)^W$ are not necessarily regular functions on $\hat{T}/W$. It contains some formal functions, as we will see later on.

\section{Unramified $L$-factors and the basic function}\label{sec:L-basic}

\textbf{Caution} -- Henceforth we shall use the opposite Borel subgroup to define various objects. More precisely, for a given split connected reductive $F$-group $G$ with the Borel pair $(B,T)$,
\begin{compactitem}
  \item the Bruhat order in $X_*(T)$ is taken relative to $B^-$ unless otherwise specified;
  \item the highest weight of an irreducible representation of $\hat{G}$, etc. is now taken relative to $B^-$;
  \item consequently, in the polynomials $P_{n_\mu,n_\lambda}(q)$ and $K_{\lambda,\mu}(q)$ we assume $\lambda \in X_*(T)_-$;
  \item we use the Cartan decomposition relative to $B_-$, so that $\{ \mathbbm{1}_{K\mu(\varpi)K} : \mu \in X_*(T)_- \}$ will form a basis of $\mathcal{H}(G(F)\sslash K)$;
  \item the image of $X_*(T)_-$ under $\mu \mapsto \mu(\varpi)$ is denoted by $T(F)_+$;
  \item in parallel, the irreducible characters $\{ \Tr V(\lambda) : \lambda \in X_*(T)_- \}$ of $\hat{G}$ form a basis of $\mathcal{H}(T(F) \sslash K_T)^W$.
\end{compactitem}
Nevertheless, the Satake isomorphism $\mathcal{S}$ is independent of the choice of Borel subgroup.

\subsection{Ngô's recipe}\label{sec:Ngo}
Here we give a brief review of \cite{NgSa}.

Let $G_0$ be a split unramified $F$-group which is semi-simple and simply-connected. Fix a Borel pair $(B_0, T_0)$ for $G_0$ and define the dual avatars $\widehat{G_0}$, $\widehat{B_0}$, $\widehat{T_0}$ over $\C$. Let $Z_0 := Z_{G_0}$.

Given $\bar{\xi} \in X_*(T_{0,\text{ad}})_- = X^*(\widehat{T_0}_{,\text{sc}})_-$, we deduce an irreducible representation
$$ \rho_{\bar{\xi}}: \widehat{G_0}_{,\text{SC}} \to \GL(V,\C) $$
of highest weight $\bar{\xi}$ relative to $\widehat{B_0}^-$.

Note that $\widehat{G_0}$ is an adjoint group. The highest weight $\bar{\xi}$ for $\rho_{\bar{\xi}}$ is not always liftable to $X_*(T_0)_-$, thus what we have for $\widehat{G_0}$ is just a projective representation $\bar{\rho}: \widehat{G_0} \to \PGL(V,\C)$. It can be lifted to an authentic representation upon passing to a canonical central extension $\hat{G}$ by $\Gm$, as explicated by the following commutative diagram
\begin{gather}\label{eqn:lifting}
\xymatrix{
  1 \ar[r] & \Gm \ar[r]^{\widehat{\det_G}} \ar@{=}[d] & \hat{G} \ar[r] \ar[d]^{\rho} \ar@{}[rd]|{\Box} & \widehat{G_0} \ar[d]^{\bar{\rho}} \ar[r] & 1 \\
  1 \ar[r] & \Gm \ar[r] & \GL(V,\C) \ar[r] & \PGL(V,\C) \ar[r] & 1
}\end{gather}
in which the rows are exact and the rightmost square is cartesian. Since $\bar{\rho}$ can also be lifted to $\widehat{G_0}_{,\text{SC}}$, there is another description for $\hat{G}$: denote the central character of $\rho_{\bar{\xi}}$ by $\omega_{\bar{\xi}}$, then we have
$$ \hat{G} = \dfrac{\widehat{G_0}_{,\text{SC}} \times \Gm }{\left\{ (z^{-1}, \omega_{\bar{\xi}}(z)) : z \in Z_{\widehat{G_0}_{,\text{SC}}} \right\} } . $$

The complex group $\hat{G}$ inherits the Borel pair $(\hat{B}, \hat{T})$ from $(\widehat{B_0}, \widehat{T_0})$. Dualization  gives a short exact sequence of split unramified $F$-groups
\begin{gather}
  1 \to G_0 \to G \xrightarrow{\det_G} \Gm \to 1
\end{gather}
and a Borel pair $(B,T)$ for $G$. It induces a short exact sequence
$$ 0 \to X_*(T_0) \to X_*(T) \xrightarrow{\det_G} X_*(\Gm) \to 1. $$

Hereafter, we shall forget $G_0$ and work exclusively with $G$, $B$, $T$, the homomorphism $\det_G: G \to \Gm$ and the representation $\rho: \hat{G} \to \GL(V,\C)$. Note that
\begin{enumerate}
  \item $\rho$ is irreducible with a highest weight $\xi \in X^*(\hat{T})_- = X_*(T)_-$ relative to $B^-$, it is mapped to $\bar{\xi} \in X_*(T_{0,\text{ad}})_-$ via $G \twoheadrightarrow G_{0,\text{AD}}$;
  \item the restriction of $\rho$ on $\Gm \hookrightarrow \hat{G}$ is simply $z \mapsto z\cdot\identity$, this means that $\rho$ satisfies \cite[(3.7)]{BK00} with respect to $\det_G: G \to \Gm$;
  \item since $\widehat{G_0}$ is adjoint, $\Ker(\rho)$ is always connected, hence $\rho$ is admissible in the sense of \cite[Definition 3.13]{BK00}: this follows from the construction of $\hat{G}$ as a fiberd product;
  \item every weight $\mu \in X_*(T) = X^*(\hat{T})$ of $\rho$ satisfies $\det_G(\mu)=1$.
\end{enumerate}

\begin{remark}
  For a similar construction for unramified quasi-split groups, see \cite[Chapitre I\!I]{La13}.
\end{remark}

\subsection{The basic function}\label{sec:basicfun}
Fix a split connected reductive $F$-group $G$ together with a Borel pair $(B,T)$ over $F$. Assume that we are given
\begin{itemize}
  \item a short exact sequence
    $$ 1 \to G_0 \to G \xrightarrow{\det_G} \Gm \to 1 $$
    where $G_0$ is a split semisimple $F$-group, and on the dual side we have $\C^\times \hookrightarrow \hat{G}$;
  \item the induced short exact sequence
    $$ 0 \to X_*(T_0) \to X_*(T) \xrightarrow{\det_G} \Z \to 0 $$
    where $\Z$ is identified with $X_*(\Gm)$ by $k \mapsto [z \mapsto z^k]$;
  \item a representation $\rho: \hat{G} \to \GL(V, \C)$ such that $\rho(z) = z\cdot\identity$ for every $z \in \C^\times$.
\end{itemize}
Such data can be obtained systematically from the recipe in \S\ref{sec:Ngo}; in practice we have to allow more general situations, such as the case of reducible $(\rho, V)$.

Choose a hyperspecial vertex in the Bruhat-Tits building of $G$ which lies in the apartment determined by $T$. The corresponding hyperspecial subgroup of $G(F)$ is denoted by $K$ as usual. Define the Satake isomorphism $\mathcal{S}$ accordingly.

Let $c \in \hat{T}/W$ and $\pi_c$ be the $K$-unramified irreducible representation with Satake parameter $c$. Our starting point is the formula \eqref{eqn:L-Sym} for the $L$-factor
$$ L(\pi_c, \rho, X) = \sum_{k \geq 0} \Tr\left( \Sym^k\rho(c) \right) X^k \quad \in \C\llbracket X\rrbracket $$
where $X$ is an indeterminate.  Also observe that for all $s \in \C$,
$$ L(\pi_c \otimes |\det_G|_F^s, \rho, X) = L(\pi_c, \rho, q_F^{-s} X). $$

For every $k \geq 0$ and $\lambda \in X_*(T)_-$,
\begin{itemize}
  \item $V(\lambda)$ denotes the irreducible representation of $\hat{G}$ with highest weight $\lambda \in X^*(\hat{T})_- = X_*(T)_-$ relative to $B^-$;
  \item $\mult(\Sym^k \rho : V(\lambda)) \in \Z_{\geq 0}$ denotes the multiplicity of $V(\lambda)$ in $\Sym^k \rho$.
\end{itemize}

Hence
$$ L(\pi_c, \rho, X) = \sum_{k \geq 0} \sum_{\lambda \in X_*(T)_-} \mult(\Sym^k \rho : V(\lambda)) \Tr(V(\lambda))(c) X^k . $$

By the Kato-Lusztig formula (Theorem \ref{prop:Kato-Lusztig}), it equals
\begin{multline*}
  \sum_{k \geq 0} \left( \sum_{\substack{\mu, \lambda \in X_*(T)_- \\ \mu \leq \lambda}} \mult(\Sym^k \rho : V(\lambda)) q_F^{-\angles{\rho_{B^-}, \mu}} K_{\lambda, \mu}(q_F^{-1}) \mathcal{S}(\mathbbm{1}_{K\mu(\varpi)K})(c) \right) X^k \\
  = \sum_{\mu \in X_*(T)_-} \left( \sum_{k \geq 0} \sum_{\substack{\lambda \in X_*(T)_- \\ \lambda \geq \mu}} K_{\lambda,\mu}(q_F^{-1}) \mult(\Sym^k \rho : V(\lambda)) X^k \right) q_F^{-\angles{\rho_{B^-}, \mu}} \mathcal{S}(\mathbbm{1}_{K\mu(\varpi)K})(c).
\end{multline*}

At this stage, one has to observe that each weight $\nu$ of $\Sym^k \rho$ satisfies $\det_G \nu = k$. Thus for each $\mu \in X_*(T)_-$, the inner sum can be taken over $k = \det_G(\mu)$. Our manipulations are thus justified in $\C\llbracket X\rrbracket$.

Introduce now another indeterminate $q$. For $\mu \in X_*(T)_-$, we set
\begin{gather}\label{eqn:c_mu}
  c_\mu(q) := \begin{cases}\displaystyle
    \sum_{\substack{\lambda \in X_*(T)_- \\ \lambda \geq \mu}} K_{\lambda,\mu}(q^{-1}) \mult(\Sym^{\det_G(\mu)} \rho : V(\lambda)), & \text{if } \det_G(\mu) \geq 0, \\
    0, & \text{otherwise}.
  \end{cases}
\end{gather}

We have to justify the rearrangement of sums. Given $\mu$ with $\det_G(\mu) = k \geq 0$, the expression \eqref{eqn:c_mu} is a finite sum over those $\lambda$ with $\det_G(\lambda)=k$ as explained above, thus is well-defined. On the other hand, given $k \geq 0$, there are only finitely many $V(\lambda)$ that appear in $\Sym^k \rho$, thus only finitely many $\mu \in X_*(T)_-$ with $\det_G(\mu)=k$ and $c_\mu(q) \neq 0$. To sum up, we arrive at the following equation in $\C\llbracket X \rrbracket$
\begin{gather}\label{eqn:L-c_mu}
  L(\pi_c, \rho, X) = \sum_{\mu \in X_*(T)_-} c_\mu(q_F) q_F^{-\angles{\rho_{B^-}, \mu}} \mathcal{S}(\mathbbm{1}_{K\mu(\varpi)K})(c) \cdot X^{\det_G(\mu)}.
\end{gather}

Define the function $\varphi_{\rho,X}: T(F)\sslash K_T \to \C[X]$ by
$$ \varphi_{\rho,X} = \sum_{\mu \in X_*(T)_-} c_\mu(q_F) q_F^{-\angles{\rho_{B^-}, \mu}} \mathcal{S}(\mathbbm{1}_{K\mu(\varpi)K}) X^{\det_G(\mu)}. $$

The preceding discussion actually showed that $\varphi_{\rho,X} \in \mathcal{H}_\text{ac}(T(F)\sslash K_T; \C[X])^W$. It is meaningful to evaluate $\varphi_{\rho,X}$ at $c \in \hat{T}/W$ by the sum \eqref{eqn:L-c_mu}: it converges in the $X$-adic topology.

\begin{definition}\label{def:basic}
  Define the \textit{basic function} $f_{\rho,X} \in \mathcal{H}_{\text{ac}}(G(F)\sslash K; \C[X])$ as
  $$ f_{\rho,X} := \sum_{\mu \in X_*(T)_-} c_\mu(q_F) q_F^{-\angles{\rho_{B^-}, \mu}} \mathbbm{1}_{K\mu(\varpi)K} \cdot  X^{\det_G(\mu)}. $$

  One may specialize the variable $X$. Define $f_\rho, f_{\rho, s} \in \mathcal{H}_{\text{ac}}(G(F)\sslash K)$ as the specialization at $X=1$ and $X=q_F^{-s}$ ($s \in \C$), respectively. Then
  $$ f_{\rho,s} = f_{\rho} \cdot |\det_G|_F^{s} . $$
\end{definition}

\begin{remark}
  The basic functions $f_{\rho,X}$, $f_{\rho,s}$ are never compactly supported on $G(F)$.
\end{remark}

\begin{proposition}\label{prop:basic-L}
  We have $\mathcal{S}(f_{\rho,X}) = \varphi_{\rho,X}$. Let $c \in \hat{T}/W$ and $\pi_c$ be the $K$-unramified irreducible representation with Satake parameter $c$. Let $V_c$ denote the underlying $\C$-vector space of $\pi_c$, then
  $$ \Tr(f_{\rho,X}|V_c) = \varphi_{\rho,X}(c) = L(\pi_c, \rho, X). $$

  Similarly, for $\Re(s)$ sufficiently large with respect to $c$, the operator $\pi_c(f_{\rho,s}): V_c \to V_c$ and its trace will be well-defined and
  $$ \Tr(f_{\rho,s}|V_c) = L(s, \pi_c, \rho). $$
\end{proposition}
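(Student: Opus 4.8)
The plan is to reduce everything to the identities already established in the body text, namely the computation \eqref{eqn:L-c_mu} of $L(\pi_c,\rho,X)$ as a sum over $\mu \in X_*(T)_-$ of $c_\mu(q_F) q_F^{-\angles{\rho_{B^-},\mu}} \mathcal{S}(\mathbbm{1}_{K\mu(\varpi)K})(c)\, X^{\det_G(\mu)}$, together with the extended Satake isomorphism of \S\ref{sec:ac}. First I would observe that $f_{\rho,X}$ does lie in $\mathcal{H}_{\mathrm{ac}}(G(F)\sslash K;\C[X])$: for each $b \in C_c(\mathfrak{a}_{G,F};\C[X])$ the function $f_{\rho,X}^b$ is supported on the finitely many double cosets $K\mu(\varpi)K$ with $\det_G(\mu)$ in a prescribed finite set, and for each such value of $\det_G(\mu)$ only finitely many $c_\mu$ are nonzero (this was checked just before \eqref{eqn:L-c_mu}), so $f_{\rho,X}^b \in \mathcal{H}(G(F)\sslash K;\C[X])$. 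Since $H_G$ is $W$-invariant and vanishes on $U(F)$, the characterization $\mathcal{S}(f^b) = \mathcal{S}(f)^b$ of the extended Satake isomorphism lets me apply $\mathcal{S}$ term by term to the defining expansion of $f_{\rho,X}$; the image is exactly $\varphi_{\rho,X}$ by construction. This proves $\mathcal{S}(f_{\rho,X}) = \varphi_{\rho,X}$.

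Next, for the evaluation at a Satake parameter $c$, I would use that $\mathcal{S}(\mathbbm{1}_{K\mu(\varpi)K})(c) = \Tr\,\pi_c(\mathbbm{1}_{K\mu(\varpi)K})$ for every $\mu$, which is the defining property of $\pi_c$ recalled in \S\ref{sec:Satake-trans}. The operator $\pi_c(f_{\rho,X})$ makes sense as a formal power series in $X$ with coefficients in $\End(V_c^K) = \End(\C)$: the $X^k$-coefficient of $f_{\rho,X}$ is a genuine element of $\mathcal{H}(G(F)\sslash K)$ (a finite combination of $\mathbbm{1}_{K\mu(\varpi)K}$ with $\det_G(\mu)=k$), acting on the line $V_c^K$, and $\pi_c$ acts trivially on everything outside $V_c^K$. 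Summing the $X^k$-coefficients of $\Tr\,\pi_c(f_{\rho,X})$ and comparing with \eqref{eqn:L-c_mu} gives $\Tr(f_{\rho,X}|V_c) = \varphi_{\rho,X}(c) = L(\pi_c,\rho,X)$ as an identity in $\C\llbracket X\rrbracket$; the right-hand equality is precisely \eqref{eqn:L-c_mu}.

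For the specialization $X = q_F^{-s}$, the only genuinely analytic point arises. The power series $L(\pi_c,\rho,X) = \det(1-\rho(c)X)^{-1}$ is a rational function of $X$ whose poles are at $X = \lambda_i^{-1}$, where $\lambda_i$ are the eigenvalues of $\rho(c)$; hence its Taylor series at $X=0$ converges for $|X| < \min_i |\lambda_i|^{-1} = \|\rho(c)\|_{\mathrm{op}}^{-1}$. Taking $\Re(s)$ large enough that $q_F^{-\Re(s)} < \|\rho(c)\|_{\mathrm{op}}^{-1}$ — this is the ``$\Re(s) \gg 0$ with respect to $c$'' in the statement — the series $\sum_k \Tr(\Sym^k\rho(c)) q_F^{-ks}$ converges absolutely, so $\pi_c(f_{\rho,s}) = \sum_k q_F^{-ks}\,\pi_c((f_{\rho,X})_k)$ converges in $\End(V_c)$ (it is a finite-rank, in fact rank-$\le 1$, operator on each graded piece and the series of traces converges absolutely), its trace is obtained by specializing the convergent series $\Tr(f_{\rho,X}|V_c)$ at $X = q_F^{-s}$, and the last displayed equality follows from $L(s,\pi_c,\rho) = L(\pi_c,\rho,q_F^{-s})$ in Definition of the $L$-factor. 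I expect the main point requiring care to be exactly this interchange of summation over $k$ with the trace and the bookkeeping that $f_{\rho,X}$ is ac rather than compactly supported — i.e.\ making precise the sense in which $\pi_c(f_{\rho,s})$ is a well-defined trace-class (here finite-rank on the relevant subspace) operator; everything else is a direct transcription of \eqref{eqn:L-c_mu} and the properties of $\mathcal{S}$ already in hand.
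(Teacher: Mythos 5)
Your handling of the formal part coincides with the paper's: the membership $f_{\rho,X} \in \mathcal{H}_{\text{ac}}(G(F)\sslash K;\C[X])$, the identity $\mathcal{S}(f_{\rho,X})=\varphi_{\rho,X}$, and the equality $\Tr(f_{\rho,X}|V_c)=\varphi_{\rho,X}(c)=L(\pi_c,\rho,X)$ in $\C\llbracket X\rrbracket$ are all read off from \eqref{eqn:L-c_mu}, exactly as in the text. The gap is in the last step, the well-definedness of $\pi_c(f_{\rho,s})$. You infer it from the absolute convergence of $\sum_k \Tr(\Sym^k\rho(c))\,q_F^{-ks}$, i.e.\ from the series of traces \emph{after} summing over each shell $\det_G(\mu)=k$. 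That only shows that the regrouped series $\sum_k q_F^{-ks}\,\pi_c\bigl((f_\rho)_k\bigr)$ converges; in effect you are \emph{defining} $\pi_c(f_{\rho,s})$ by this particular grouping. What has to be proved is that the defining integral $\int_{G(F)} f_{\rho,s}(g)\,\pi_c(g)v\,\dd g$ converges, equivalently (Remark \ref{rem:basic-L}) that $f_{\rho,s}(\cdot)\angles{\check{v},\pi_c(\cdot)v} \in L^1(G(F))$, i.e.\ the absolute convergence of the full sum \eqref{eqn:L-conv} over $\mu \in X_*(T)_-$. A bound on the shell sums $\Tr(\Sym^k\rho(c))$ does not control the individual terms $c_\mu(q_F)\,q_F^{-\angles{\rho_{B^-},\mu}}\,\Tr\pi_c(\mathbbm{1}_{K\mu(\varpi)K})$: within a fixed $k$ these could a priori be far larger than their sum, with cancellation inside the shell, so shell-wise convergence does not upgrade itself to the $L^1$ statement.

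What is missing is precisely a pointwise estimate on $\Tr\pi_c(\mathbbm{1}_{K\mu(\varpi)K})$, and this is how the paper argues: Macdonald's formula for the zonal spherical function bounds this trace by (a constant times) $q_F^{\angles{\rho_{B^-},\mu}}\max_{w\in W}\lvert w\chi_c(\mu(\varpi))\rvert$, which, combined with the polynomial growth of $\mu \mapsto c_\mu(q_F)$ (Corollary \ref{prop:c_mu-growth}) and the Ehrhart-type count of $\{\mu \in \mathcal{C}_\rho \cap X_*(T)_- : \det_G(\mu)=k\}$ (Lemma \ref{prop:Ehrhart}), gives absolute convergence of \eqref{eqn:L-conv} for $\Re(s)$ beyond an explicit threshold involving $\gamma_c$. (That threshold is essentially your condition $q_F^{-\Re(s)} < \lVert\rho(c)\rVert^{-1}$, since $\Supp(V)$ is $W$-stable and the eigenvalues of $\rho(c)$ have absolute values $q_F^{\angles{\gamma_c,\nu}}$, $\nu \in \Supp(V)$; so your numerical bound on $\Re(s)$ is fine.) Alternatively, for unitary $\pi_c$ one can use the $L^1$-estimate of Proposition \ref{prop:L1} as in Remark \ref{rem:basic-L}. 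Either way, some term-by-term estimate of this kind must be supplied before you may assert that $\pi_c(f_{\rho,s})$ and its trace are well-defined and compute the trace by rearranging the sum.
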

\begin{proof}
  The first equality has been noted. As for the second equality, let us show the absolute convergence of
  \begin{gather}\label{eqn:L-conv}
    \sum_{\mu \in X_*(T)_-} c_\mu(q_F) q_F^{-\angles{\rho_{B^-}, \mu}} \Tr(\pi_c(\mathbbm{1}_{K\mu(\varpi)K})) \cdot q_F^{-\Re(s)\det_G(\mu)}
  \end{gather}
  for $\Re(s) \gg 0$. Granting this, the equalities $\Tr(f_{\rho,X}|V_c) = \varphi_{\rho,X}(c) = L(\pi_c, \rho, X)$ will follow at once (say from \eqref{eqn:L-c_mu}), in which every term is well-defined. We have to cite some results as follows.
  \begin{enumerate}
    \item Macdonald's formula for \textit{zonal spherical functions} \cite[Theorem 5.6.1]{HKP10} says that for $\mu \in X_*(T)_-$, the trace $\Tr(\pi_c(\mathbbm{1}_{K\mu(\varpi)K}))$ is equal to
      $$ \frac{q_F^{\angles{\rho_{B^-}, \mu}}}{W_\mu(q_F^{-1})} \sum_{w \in W} \prod_{\alpha \in \Sigma_{B^-}} \dfrac{1 - q_F^{-1}(w\chi_c)(\alpha^\vee(\varpi)^{-1})}{1 - (w\chi_c)(\alpha^\vee(\varpi)^{-1})} \cdot w\chi_c(\mu(\varpi)) $$
      where $\chi_c: T(F) \to \C^\times$ is a unramified character associated to $c$ and 
      $$ W_\mu(q) := \sum_{w \in W: w\mu = \mu} q^{\ell(w)}. $$
      As a function in $\mu$, the trace is thus dominated by
      $$ q_F^{\angles{\rho_{B^-}, \mu}} \max_{w \in W}\left\lvert w\chi_c(\mu(\varpi))\right\rvert. $$
    \item In Lemma \ref{prop:Ehrhart} we will see that the number of points in $\mathcal{C}_\rho \cap X_*(T)_-$ (see Corollary \ref{prop:basic-supp}) with $\det_G = k$ is of polynomial growth in $k$.
    \item The Lemma \ref{prop:c_mu-growth} asserts that $\mu \mapsto c_\mu(q_F)$ is of at most polynomial growth.
  \end{enumerate}

  Let $\gamma_c \in X^*(T)_\R$ be the element such that $q_F^{\angles{\gamma_c, \nu}} = |\chi_c(\nu(\varpi))|$ for all $\nu \in X_*(T)$. The three facts above imply the absolute convergence of \eqref{eqn:L-conv} whenever
  $$ \Re(s) > \max \left\{ \angles{w\gamma_c, \mu} : w \in W, \mu \in \mathcal{C}_\rho \cap X_*(T)_- \text{ satisfying } \det_G(\mu)=1 \right\} $$
  holds.
\end{proof}

\begin{remark}\label{rem:basic-L}
  Choose $K$-fixed vectors $v \in V_c$ and $\check{v} \in \check{V}_c$ (the contragredient representation) such that $\angles{\check{v},v}=1$. Then $\Tr(\pi_c(\mathbbm{1}_{K\mu(\varpi)K}))$ equals $\int_{K\mu(\varpi)K} \angles{\check{v}, \pi_c(x) v} \dd x$. The absolute convergence of \eqref{eqn:L-conv} is equivalent to that $f_{\rho,s}(\cdot) \angles{\check{v}, \pi_c(\cdot) v} \in L^1(G(F))$, in which case that integral equals $L(s, \pi_c, \rho)$.

  When $\pi_c$ is unitary, there is another way to control $\Re(s)$. In fact it suffices that $f_{\rho,s} \in L^1(G(F))$ since $\angles{\check{v}, \pi_c(\cdot)v}$ is uniformly bounded. In Proposition \ref{prop:L1} we will obtain a lower bound for this purpose.
\end{remark}

\subsection{Trivial estimates}\label{sec:estimates}
We will give some estimates on the coefficients $c_\mu(q_F)$ (see \eqref{eqn:c_mu}) of the basic function $f_\rho$. These estimates are called trivial since they are obtained by passing to the classical limit $q=1$.

Observe that $\det_G(\nu)=1$ for every $\nu \in \Supp(V)$, the set with multiplicities of the $T$-weights of $V$.

\begin{proposition}\label{prop:classical}
  For every $\mu \in X_*(T)_-$ we have
  $$ c_\mu(q_F) \leq c_\mu(1) = \left\lvert \left\{ (a_\nu)_\nu \in (\Z_{\geq 0})^{\Supp(V)} : \sum_{\nu \in \Supp(V)} a_\nu \nu = \mu \right\} \right\rvert . $$
\end{proposition}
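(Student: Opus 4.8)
The plan is to evaluate $c_\mu(1)$ by hand and then compare it term by term with $c_\mu(q_F)$. Throughout we may assume $\det_G(\mu) \geq 0$: otherwise $c_\mu(q) = 0$ by the definition \eqref{eqn:c_mu}, and the set on the right-hand side is empty as well, since any $(a_\nu)_\nu \in \Z_{\geq 0}^{\Supp(V)}$ with $\sum_\nu a_\nu \nu = \mu$ would force $\det_G(\mu) = \sum_\nu a_\nu \det_G(\nu) = \sum_\nu a_\nu \geq 0$, using that every weight $\nu \in \Supp(V)$ satisfies $\det_G(\nu) = 1$.

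First I would identify $c_\mu(1)$. Recall that $K_{\lambda,\mu}(q)$ is Lusztig's $q$-analogue of the weight multiplicity for the based root datum of $\hat{G}$; specializing $q = 1$ in \eqref{eqn:K} degenerates $\mathcal{P}(\cdot\,;q)$ to Kostant's partition function and the alternating sum over $W$ to Kostant's multiplicity formula, so $K_{\lambda,\mu}(1) = \dim V(\lambda)_\mu$. Writing $k := \det_G(\mu)$ and using the decomposition $\Sym^k\rho \cong \bigoplus_{\lambda \in X_*(T)_-} V(\lambda)^{\oplus \mult(\Sym^k\rho\,:\,V(\lambda))}$, we obtain
$$ c_\mu(1) = \sum_{\lambda \in X_*(T)_-} \dim V(\lambda)_\mu \cdot \mult(\Sym^k\rho : V(\lambda)) = \dim(\Sym^k\rho)_\mu. $$
A monomial basis for the $\mu$-weight space of $\Sym^k\rho$ with respect to a weight basis of $V$ is indexed by the tuples $(a_\nu)_{\nu \in \Supp(V)} \in \Z_{\geq 0}^{\Supp(V)}$ with $\sum_\nu a_\nu = k$ and $\sum_\nu a_\nu \nu = \mu$; but once $\sum_\nu a_\nu \nu = \mu$ holds, applying $\det_G$ and using $\det_G(\nu) = 1$ shows $\sum_\nu a_\nu = \det_G(\mu) = k$ automatically. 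This yields the asserted formula for $c_\mu(1)$.

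For the inequality I would argue monotonicity of each summand. It is a standard fact (see the discussion in \S\ref{sec:intro}) that $K_{\lambda,\mu}(q) \in \Z_{\geq 0}[q]$; writing $K_{\lambda,\mu}(q) = \sum_{j \geq 0} a_j q^j$ with $a_j \in \Z_{\geq 0}$ and using $q_F > 1$, we get $K_{\lambda,\mu}(q_F^{-1}) = \sum_{j \geq 0} a_j q_F^{-j} \leq \sum_{j \geq 0} a_j = K_{\lambda,\mu}(1)$. Since moreover $\mult(\Sym^k\rho : V(\lambda)) \geq 0$, summing \eqref{eqn:c_mu} over $\lambda$ gives $c_\mu(q_F) \leq c_\mu(1)$, as desired.

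The argument is essentially routine, and I do not anticipate a serious obstacle; the only points needing care are the bookkeeping with $\det_G$ — which makes the total-degree constraint $\sum_\nu a_\nu = k$ on the multi-index automatic and lets one read off $c_\mu(1)$ as a single weight multiplicity $\dim(\Sym^k\rho)_\mu$ — and the invocation of the positivity $K_{\lambda,\mu}(q) \in \Z_{\geq 0}[q]$ together with the classical limit $K_{\lambda,\mu}(1) = \dim V(\lambda)_\mu$.
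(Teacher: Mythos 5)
Your proof is correct and follows essentially the same route as the paper: the inequality comes from the non-negativity of the coefficients of $K_{\lambda,\mu}(q)$ evaluated at $q_F^{-1} \leq 1$, and the equality comes from the classical limit $K_{\lambda,\mu}(1) = \dim V(\lambda)_\mu$ combined with the decomposition of $\Sym^{\det_G(\mu)}\rho$, so that $c_\mu(1)$ is the $\mu$-weight multiplicity of $\Sym(\rho)$, counted by the displayed set of tuples. Your extra remarks (the vacuous case $\det_G(\mu)<0$ and the automatic constraint $\sum_\nu a_\nu = \det_G(\mu)$) are accurate bookkeeping that the paper leaves implicit.
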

\begin{proof}
  The first inequality follows \eqref{eqn:c_mu} and the well-known non-negativity of the coefficients of $K_{\lambda,\mu}(q)$. For the second equality, we use the fact that
  $$ K_{\lambda,\mu}(1) = \text{the multiplicity of $\mu$ in $V(\lambda)|_{\hat{T}}$}. $$
  This is also well-known; in fact, \eqref{eqn:K} reduces to Kostant's multiplicity formula for $V(\lambda)$ at $q=1$. Thus
  \begin{align*}
    c_\mu(1) & = \sum_{\lambda \in X^*(T)_-} \text{mult}(V(\lambda)|_{\hat{T}} : \mu) \cdot \text{mult}(\Sym^{\det_G(\mu)} \rho : V(\lambda)) \\
    & = \text{mult}(\Sym^{\det_G(\mu)}\rho|_{\hat{T}} : \mu) = \text{mult}(\Sym(\rho)|_{\hat{T}} : \mu),
  \end{align*}
  and the assertion follows.
\end{proof}

\begin{corollary}\label{prop:basic-supp}
  Let $\mathcal{C}_\rho$ be the convex hull of $\Supp(V) \sqcup \{0\}$ in $X_*(T)_\R$. Then
  \begin{enumerate}
    \item $\mathcal{C}_\rho$ is a strongly convex polyhedral cone, that is, it contains no lines;
    \item the basic function $f_{\rho,X}$ is supported in $K(\mathcal{C}_\rho \cap X_*(T)_-)K$, where we embed $X_*(T)_-$ into $T(F)$ by $\mu \mapsto \mu(\varpi)$.
  \end{enumerate}
\end{corollary}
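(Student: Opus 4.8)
The plan is to reduce both assertions to Proposition~\ref{prop:classical} together with the observation, already recorded above, that $\det_G(\nu)=1$ for every $\nu\in\Supp(V)$. Throughout I regard $\mathcal{C}_\rho$ as the convex cone generated by $\Supp(V)\cup\{0\}$, i.e.\ the set of finite sums $\sum_{\nu\in\Supp(V)}a_\nu\nu$ with $a_\nu\in\R_{\geq 0}$ (adjoining the origin to the set of generators changes nothing), and I view $\det_G$ as a linear form $X_*(T)_\R\to\R$ via the surjection $X_*(T)\twoheadrightarrow\Z$.

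For part (1), since $\mathcal{C}_\rho$ is generated by the finite set $\Supp(V)$ it is a polyhedral cone by the Minkowski--Weyl theorem, so the only thing to verify is that it contains no line. The key point is that $\det_G$ takes the value $1$ on each generator $\nu\in\Supp(V)$; hence $\det_G\geq 0$ on all of $\mathcal{C}_\rho$, and if $x=\sum_\nu a_\nu\nu\in\mathcal{C}_\rho$ (with $a_\nu\geq 0$) satisfies $\det_G(x)=0$, then $\sum_\nu a_\nu=\sum_\nu a_\nu\det_G(\nu)=0$, forcing every $a_\nu=0$ and hence $x=0$. Thus the hyperplane $\{\det_G=0\}$ meets $\mathcal{C}_\rho$ only at the origin, so $\mathcal{C}_\rho\cap(-\mathcal{C}_\rho)=\{0\}$ and $\mathcal{C}_\rho$ is strongly convex.

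For part (2), I use the Cartan decomposition $G(F)=KT(F)_+K$ (relative to $B^-$): the double cosets $K\mu(\varpi)K$ for $\mu\in X_*(T)_-$ are pairwise disjoint and exhaust $G(F)$, so the basic function $f_{\rho,X}=\sum_{\mu\in X_*(T)_-}c_\mu(q_F)q_F^{-\angles{\rho_{B^-},\mu}}\mathbbm{1}_{K\mu(\varpi)K}X^{\det_G(\mu)}$ is supported precisely on $\bigcup K\mu(\varpi)K$, the union running over those $\mu\in X_*(T)_-$ with $c_\mu(q_F)\neq 0$. For such a $\mu$, Proposition~\ref{prop:classical} gives $0<c_\mu(q_F)\leq c_\mu(1)$, so $c_\mu(1)\neq 0$, which (again by Proposition~\ref{prop:classical}) means there is a tuple $(a_\nu)_{\nu\in\Supp(V)}\in(\Z_{\geq 0})^{\Supp(V)}$ with $\sum_\nu a_\nu\nu=\mu$; in particular $\mu\in\mathcal{C}_\rho$, and of course $\mu\in X_*(T)_-$. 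Hence $f_{\rho,X}$ is supported in $K(\mathcal{C}_\rho\cap X_*(T)_-)K$, as claimed.

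I do not expect a genuine obstacle here: the only non-formal ingredient is Proposition~\ref{prop:classical}, and even it can be bypassed, since a nonzero summand in the defining formula \eqref{eqn:c_mu} of $c_\mu$ already forces $\mu$ to occur as a weight of some $V(\lambda)$ appearing in $\Sym^{\det_G(\mu)}\rho$, hence as a sum of $\det_G(\mu)$ elements of $\Supp(V)$. The one point that deserves a moment's care is the strong convexity in (1) --- which is what makes the later Ehrhart-type counting (Lemma~\ref{prop:Ehrhart}) meaningful --- and it rests entirely on the fact that $\det_G$ itself serves as a separating linear functional for the cone.
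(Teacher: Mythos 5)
Your proof is correct and follows essentially the same route as the paper: strong convexity of $\mathcal{C}_\rho$ comes from the fact that $\det_G$ equals $1$ on every element of $\Supp(V)$ (the paper simply calls this evident), and the support statement is exactly the paper's reduction to Proposition~\ref{prop:classical}, since $c_\mu(q_F)\neq 0$ forces $c_\mu(1)\neq 0$ and hence $\mu$ is a nonnegative integral combination of weights of $V$. Your closing remark that one can bypass Proposition~\ref{prop:classical} via the defining formula \eqref{eqn:c_mu} is a fine (equivalent) observation, but no new idea is needed.
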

\begin{proof}
  The first assertion is evident and the second follows from Proposition \ref{prop:classical}.
\end{proof}

\begin{corollary}\label{prop:c_mu-growth}
  There is a polynomial function $Q: X_*(T)_\R \to \R$ depending solely on $(\rho,V)$ such that $c_\mu(q_F) \leq |Q(\mu)|$.
\end{corollary}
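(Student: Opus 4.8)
The plan is to invoke Proposition \ref{prop:classical} to pass to the ``classical limit'' $q=1$, and then to bound the resulting vector partition function by an elementary lattice-point estimate, using that the cone $\mathcal{C}_\rho$ is strongly convex (Corollary \ref{prop:basic-supp}(1)).

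Concretely, I would first use Proposition \ref{prop:classical} to replace $c_\mu(q_F)$ by $c_\mu(1)$, which is the number of lattice points of the polyhedron
\[ P_\mu := \Bigl\{ (a_\nu)_\nu \in \R_{\ge 0}^{\Supp(V)} : \textstyle\sum_{\nu} a_\nu \nu = \mu \Bigr\} \subset \R^{\Supp(V)}, \]
so that it suffices to bound $\#\bigl(P_\mu \cap \Z^{\Supp(V)}\bigr)$ by a polynomial in $\mu$; note $P_\mu = \emptyset$ unless $\mu \in \mathcal{C}_\rho$. The next step is to observe that each $P_\mu$ is \emph{bounded}: a nonzero element of its recession cone would be a relation $\sum_\nu b_\nu \nu = 0$ with $b_\nu \ge 0$ not all zero, and applying $\det_G$ (recall $\det_G(\nu) = 1$ for every $\nu \in \Supp(V)$) forces $\sum_\nu b_\nu = 0$, hence all $b_\nu = 0$. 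This is exactly the observation that makes $\mathcal{C}_\rho$ strongly convex.

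The heart of the argument is then a uniform linear bound on the coordinates of the vertices of $P_\mu$. Fix a Euclidean norm $\|\cdot\|$ on $X_*(T)_\R$. Writing $P_\mu = \{a \ge 0 : Aa = \mu\}$ with $A$ the fixed matrix whose columns are the elements of $\Supp(V)$, every vertex of the bounded polyhedron $P_\mu$ is a basic feasible solution: it is supported on a linearly independent subset $I \subseteq \Supp(V)$ of columns, and its nonzero entries are given by Cramer's rule applied to the corresponding square subsystem, hence are linear in $\mu$. Letting $I$ range over the finitely many subsets of $\Supp(V)$ produces a constant $C_0$, depending only on $\Supp(V)$ and $\|\cdot\|$, with $a_\nu \le C_0(1 + \|\mu\|)$ for every vertex $(a_\nu)_\nu$ of $P_\mu$ and every $\nu$; since any point of $P_\mu$ is a convex combination of vertices, the same bound holds for all $(a_\nu)_\nu \in P_\mu$. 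Therefore $P_\mu \cap \Z^{\Supp(V)} \subseteq \{0,1,\dots,\lfloor C_0(1+\|\mu\|)\rfloor\}^{\Supp(V)}$, whence
\[ c_\mu(q_F) \le c_\mu(1) \le \bigl(C_0(1 + \|\mu\|) + 1\bigr)^{|\Supp(V)|}. \]
Using $\|\mu\| \le 1 + \|\mu\|^2$ and that $\mu \mapsto \|\mu\|^2$ is a (positive definite quadratic) polynomial on $X_*(T)_\R$, the right-hand side is $\le |Q(\mu)|$ for an explicit polynomial function $Q$ built only from $\|\cdot\|^2$, $C_0$ and $|\Supp(V)|$ — all depending only on $(\rho,V)$ — while for $\mu \notin \mathcal{C}_\rho$ both sides vanish. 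This yields the corollary.

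I expect the only delicate point to be the uniform linear bound on the vertices: making precise that the basic-feasible-solution description of the vertices of $\{a \ge 0 : Aa = \mu\}$ gives entries linear in $\mu$ with constants independent of $\mu$. One can instead phrase this via the optimal value $\phi(\mu) := \max\{\sum_\nu a_\nu : a \in P_\mu\}$ of a linear program, which is positively homogeneous of degree $1$ and concave, hence finite and bounded by $C_0\|\mu\|$ on the polyhedral cone $\mathcal{C}_\rho$; or one could simply cite that vector partition functions are piecewise quasi-polynomial. The elementary estimate above keeps the argument self-contained, and is all that the later convergence statement in Proposition \ref{prop:basic-L} requires.
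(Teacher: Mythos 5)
Your proposal is correct and follows the paper's own route: the paper's proof is just ``combine Proposition \ref{prop:classical} with the fact that the number of non-negative integer solutions of $\sum_{\nu \in \Supp(V)} a_\nu \nu = \mu$ is of polynomial growth in $\mu$.'' You merely supply the elementary lattice-point/vertex argument (boundedness via $\det_G(\nu)=1$, linear bound on basic feasible solutions) that the paper leaves implicit, and that filling-in is sound.
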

\begin{proof}
  Combine Proposition \ref{prop:classical} with the fact the number of non-negative integer solutions of $\sum_{\nu \in \Supp(V)} a_\nu \nu = \mu$ is of polynomial growth in $\mu$.
\end{proof}

\begin{lemma}\label{prop:Ehrhart}
  There exists a polynomial function $R: \R \to \R$ depending solely on $(\rho,V)$ such that
  $$ \left\lvert \left\{ \mu \in \mathcal{C}_\rho \cap X_*(T)_- : \det_G(\mu)=k \right\}\right\rvert \leq R(k) $$
  for every $k \in \Z_{\geq 0}$.
\end{lemma}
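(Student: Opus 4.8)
The plan is to identify the points counted by the Lemma with the lattice points of $X_*(T)$ in the $k$-th dilate $kQ$ of a single \emph{bounded} rational polytope $Q$, and then to bound $\bigl\lvert kQ \cap X_*(T)\bigr\rvert$ by a polynomial in $k$ via a crude coordinate-box estimate; no sharp Ehrhart-type input is needed. The only structural ingredient is the remark at the start of \S\ref{sec:estimates}: every weight $\nu \in \Supp(V)$ satisfies $\det_G(\nu) = 1$. Since, by Corollary~\ref{prop:basic-supp}, $\mathcal{C}_\rho$ is the rational polyhedral cone generated by $\Supp(V)$, this forces $\det_G\bigl(\sum_\nu a_\nu \nu\bigr) = \sum_\nu a_\nu$ for $a_\nu \geq 0$, so $\det_G$ is $\geq 0$ on $\mathcal{C}_\rho$ and vanishes there only at the origin.

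First I would set
\[ Q := \bigl\{ x \in X_*(T)_\R \;:\; x \in \mathcal{C}_\rho,\ x \text{ anti-dominant},\ \det_G(x) = 1 \bigr\}, \]
the intersection of the rational polyhedral cone $\mathcal{C}_\rho \cap X_*(T)_{\R,-}$ with the rational affine hyperplane $\{\det_G = 1\}$. By the previous paragraph $Q$ is contained in the compact polytope $\mathrm{conv}(\Supp(V))$, hence is a bounded rational polytope (possibly empty, in which case the Lemma is trivial). Because $\mathcal{C}_\rho$ and the anti-dominant cone are stable under multiplication by $\R_{\geq 0}$ and $\det_G$ is linear, one checks directly that for every integer $k \geq 1$
\[ \bigl\{ \mu \in \mathcal{C}_\rho \cap X_*(T)_- : \det_G(\mu) = k \bigr\} \;=\; kQ \cap X_*(T), \qquad kQ := \{ kx : x \in Q \}, \]
while for $k = 0$ the left-hand set is just $\{0\}$ since $\det_G > 0$ on $\mathcal{C}_\rho \setminus \{0\}$.

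It then remains to bound $\bigl\lvert kQ \cap X_*(T)\bigr\rvert$. I would fix a $\Z$-basis of $X_*(T)$, of rank $r = \dim_\R X_*(T)_\R$; in the resulting coordinates $Q$ lies in a box $\prod_{i=1}^{r}[m_i, M_i]$ with $m_i, M_i \in \Z$ (round the coordinate ranges of $Q$ outward), so $kQ$ lies in $\prod_{i=1}^{r}[km_i, kM_i]$ and
\[ \bigl\lvert kQ \cap X_*(T) \bigr\rvert \;\leq\; \prod_{i=1}^{r}\bigl( k(M_i - m_i) + 1 \bigr) \;=:\; R(k), \]
a polynomial in $k$ depending only on $Q$, hence only on $(\rho, V)$ together with the fixed data $X_*(T)$, the anti-dominant chamber, and $\det_G$. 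Since $R(0) = 1$, the bound also covers $k = 0$, and the Lemma follows.

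The argument is completely elementary, and there is no genuine obstacle; the one point that needs attention is the boundedness of $Q$ (equivalently, of the slices $\mathcal{C}_\rho \cap \{\det_G = k\}$), which is exactly where the hypothesis $\det_G \equiv 1$ on $\Supp(V)$ — i.e.\ point~(4) of \S\ref{sec:Ngo} — enters. If a sharper $R$ were wanted, one could replace the box estimate by Ehrhart's theorem: $k \mapsto \bigl\lvert kQ \cap X_*(T)\bigr\rvert$ is a quasi-polynomial in $k$ of degree $\dim Q$, and every quasi-polynomial is dominated by an honest polynomial of the same degree, yielding $R$ of degree $\dim Q \leq r - 1$.
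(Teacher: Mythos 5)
Your argument is correct and follows essentially the same route as the paper: both slice the cone $\mathcal{C}_\rho \cap X_*(T)_{\R,-}$ along $\det_G = 1$ to obtain a bounded rational polytope (your $Q$ is the paper's $\mathcal{P}$) and identify the $k$-th slice with the lattice points of the $k$-th dilate. The only difference is at the final counting step, where you use an elementary coordinate-box bound instead of invoking the Ehrhart polynomial as in the paper --- which in fact sidesteps the integrality/quasi-polynomial subtlety behind the citation of \cite[6.E]{BG09}.
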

\begin{proof}
  Observe that $\mathcal{C}_\rho \cap X_*(T)_{\R,-}$ is a convex polyhedral cone lying in the half-space $\det_G > 0$. Its intersection with $\det_G = 1$ can be viewed as an integral polytope in $X_*(T_0)_\R$. Denote this polytope by $\mathcal{P}$. Then the cardinality of $\mathcal{C}_\rho \cap X_*(T)_- \cap (\det_G = k)$ can be bounded by that of $(k\mathcal{P}) \cap X_*(T_0)$. The behaviour of $k \mapsto |k\mathcal{P} \cap X_*(T_0)|$ is described by the Ehrhart polynomial of $\mathcal{P}$: see for example \cite[6.E]{BG09}.
\end{proof}

Hereafter we choose the Haar measure on $T(F)$ such that $\mes(T(F) \cap K)=1$. Recall the integration formula
\begin{gather}\label{eqn:KAK-int}
  \int_{G(F)} f(x) \dd x = \int_K \int _K \int_{T(F)_+} f(k_1 t k_2) D_T(t) \dd t \dd k_1 \dd k_2
\end{gather}
for every measurable function $f$ on $G(F)$, where $D_T(t) := \mes(KtK)$. By \cite[I.1 (5)]{Wa03}, there exist constants $c_1, c_2 > 0$ such that
\begin{gather}\label{eqn:D_T}
  c_1 \delta_{B^-}(t)^{-1} \leq D_T(t) \leq c_2 \delta_{B^-}(t)^{-1}, \quad t \in T(F)_+
\end{gather}
where $T(F)_+ \subset T(F)$ is the image of $X_*(T)_-$ under $\mu \mapsto \mu(\varpi)$.

\begin{proposition}\label{prop:Lp}
  Let $p \geq 2$ and $s \in \C$. Then $f_{\rho,s} \in L^p(G(F))$ whenever $\Re(s) > 0$.
\end{proposition}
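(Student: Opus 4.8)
The plan is to estimate $\|f_{\rho,s}\|_{L^p}^p = \int_{G(F)} |f_{\rho,s}(x)|^p \dd x$ head-on, turning the integral into a lattice sum over $\mu \in X_*(T)_-$ via the Cartan decomposition and then comparing it with a geometric series in $k := \det_G(\mu)$. By Definition \ref{def:basic}, the function $f_{\rho,s}$ is constant on each double coset $K\mu(\varpi)K$ with value $c_\mu(q_F)\, q_F^{-\angles{\rho_{B^-},\mu}}\, q_F^{-s\det_G(\mu)}$, so $|f_{\rho,s}|^p$ is constant there; inserting this into the integration formula \eqref{eqn:KAK-int}, together with $\mes(K\mu(\varpi)K) = D_T(\mu(\varpi))$, gives
\[
  \|f_{\rho,s}\|_{L^p}^p \;=\; \sum_{\mu \in X_*(T)_-} c_\mu(q_F)^p\, q_F^{-p\angles{\rho_{B^-},\mu}}\, q_F^{-p\Re(s)\det_G(\mu)}\, D_T(\mu(\varpi)).
\]

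The step where the hypothesis $p \geq 2$ really intervenes is the treatment of $D_T(\mu(\varpi))$. By \eqref{eqn:D_T} and $\delta_{B^-}(\mu(\varpi))^{-1} = q_F^{2\angles{\rho_{B^-},\mu}}$ one has $D_T(\mu(\varpi)) \leq c_2\, q_F^{2\angles{\rho_{B^-},\mu}}$, so the $\mu$-th summand is at most $c_2\, c_\mu(q_F)^p\, q_F^{(2-p)\angles{\rho_{B^-},\mu}}\, q_F^{-p\Re(s)\det_G(\mu)}$. Now $\rho_{B^-}$ is a non-negative rational combination of the simple roots in $\Delta_{B^-}$, and $\angles{\alpha,\mu} \geq 0$ for every $\alpha \in \Delta_{B^-}$ when $\mu \in X_*(T)_-$; hence $\angles{\rho_{B^-},\mu} \geq 0$, and since $2-p \leq 0$ and $q_F > 1$ this yields $q_F^{(2-p)\angles{\rho_{B^-},\mu}} \leq 1$. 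Consequently
\[
  \|f_{\rho,s}\|_{L^p}^p \;\leq\; c_2 \sum_{\mu \in X_*(T)_-} c_\mu(q_F)^p\, q_F^{-p\Re(s)\det_G(\mu)}.
\]

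It remains to organize this by $k := \det_G(\mu)$, which is $\geq 0$ whenever $c_\mu(q_F) \neq 0$ by definition \eqref{eqn:c_mu}. By Proposition \ref{prop:classical}, $0 \leq c_\mu(q_F) \leq c_\mu(1)$, and since every $T$-weight of $V$ has $\det_G = 1$ the identity $c_\mu(1) = \mult(\Sym^k\rho|_{\hat T}:\mu)$ (from the proof of Proposition \ref{prop:classical}) gives $\sum_{\mu \in X_*(T)_-,\ \det_G(\mu)=k} c_\mu(1) \leq \dim \Sym^k V = \binom{k+\dim V - 1}{\dim V - 1}$, a polynomial in $k$. Using $p \geq 1$ and the elementary bound $\sum_i a_i^p \leq \bigl(\sum_i a_i\bigr)^p$ for $a_i \geq 0$, the inner $k$-slice sum is $\leq \bigl(\dim \Sym^k V\bigr)^p$, still polynomial in $k$, so
\[
  \|f_{\rho,s}\|_{L^p}^p \;\leq\; c_2 \sum_{k \geq 0} \bigl(\dim \Sym^k V\bigr)^p\, q_F^{-pk\Re(s)}.
\]
Since $\Re(s) > 0$ and $p > 0$ force $q_F^{-p\Re(s)} < 1$, this is a polynomial times a convergent geometric series, hence finite, which proves $f_{\rho,s} \in L^p(G(F))$. (Corollary \ref{prop:c_mu-growth} together with Lemma \ref{prop:Ehrhart} gives the same polynomial-in-$k$ bound on the slice sum, if one prefers to avoid $\dim\Sym^k V$.)

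I do not expect a genuine obstacle here: the argument is a comparison of polynomial growth (the size of $c_\mu$, and the number and size of the relevant lattice points) against the geometric decay supplied by $q_F^{-p\Re(s)\det_G(\mu)}$. The single point to watch is that $p \geq 2$ is used precisely to absorb the factor $D_T(\mu(\varpi))$, which by \eqref{eqn:D_T} is comparable to $\delta_{B^-}(\mu(\varpi))^{-1}$, into the trivial estimate; for $1 \leq p < 2$ this factor grows and one must compensate with an explicit lower bound on $\Re(s)$, which is exactly why the companion $L^1$ statement (Proposition \ref{prop:L1}) is treated separately.
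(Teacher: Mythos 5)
Your proof is correct and follows essentially the same route as the paper: the Cartan decomposition turns $\|f_{\rho,s}\|_{L^p}^p$ into a sum over $X_*(T)_-$, the bound $D_T(\mu(\varpi)) \leq c_2\, q_F^{2\angles{\rho_{B^-},\mu}}$ from \eqref{eqn:D_T} is used, the hypothesis $p \geq 2$ absorbs the factor $q_F^{(2-p)\angles{\rho_{B^-},\mu}}$ exactly as in the paper, and the remaining sum is compared with a geometric series in $k = \det_G(\mu)$. The only (harmless) difference is bookkeeping of the polynomially growing factors: the paper invokes Corollary \ref{prop:c_mu-growth} and Lemma \ref{prop:Ehrhart} together with the shift $s \mapsto s-\epsilon$, whereas you bound the slice sum directly by $\bigl(\dim \Sym^k V\bigr)^p$ via $c_\mu(q_F) \leq c_\mu(1)$, which is a slightly more explicit version of the same estimate.
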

\begin{proof}
  Since $\delta_{B^-}(\mu(\varpi)) = q_F^{-\angles{2\rho_{B^-}, \mu}}$, we can write
  \begin{align*}
    \int_{G(F)} |f_{\rho, s}(x)|^p \dd x & = \int_{T(F)_+} |f_{\rho, s}(t)|^p D_T(t) \dd t \\
    & = \sum_{\mu \in X_*(T)_-} |f_{\rho,s}(\mu(\varpi))|^p D_T(\mu(\varpi)) \\
    & \leq c_2 \sum_{\mu \in X_*(T)_-} c_\mu(q_F)^p q_F^{-p\angles{\rho_{B^-}, \mu} - \Re(s) p \det_G(\mu)} q_F^{\angles{2\rho_{B^-}, \mu}}
  \end{align*}
  using \eqref{eqn:D_T}. As $p \geq 2$ and $\mu \in X_*(T)_-$, the exponent of $q_F$ is
  $$ (2-p)\angles{\rho_{B^-}, \mu} - \Re(s)p \det_G(\mu) \leq -\Re(s) p \det_G(\mu). $$

  On the other hand, we have seen that $c_\mu(q_F)$ is of at most polynomial growth in $\mu$. Thus so is $c_\mu(q_F)^p$ and we can drop it in the study of convergence issues, say upon replacing $s$ by $s-\epsilon$ for an arbitrarily small $\epsilon > 0$. Furthermore, the Lemma \ref{prop:Ehrhart} reduces our problem to the series $\sum_{k \geq 0} q_F^{-\Re(s) pk}$, again upon replacing $s$ by $s-\epsilon$. The latter series converges absolutely as $\Re(s) > 0$.
\end{proof}

\begin{proposition}\label{prop:L1}
  Let $s \in \C$. Then $f_{\rho,s} \in L^1(G(F))$ whenever
  $$ \Re(s) > m := \max_{\xi \in \Supp(V)} \angles{\rho_{B^-}, \xi}. $$

  If it is the case, then $\int_{G(F)} f_{\rho,s}(x) \dd x = L(s, \mathbbm{1}, \rho)$ where $\mathbbm{1}$ denotes the trivial representation of $G(F)$.
\end{proposition}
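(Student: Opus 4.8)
The plan is to prove the two assertions in turn: first that $f_{\rho,s}\in L^1(G(F))$, by refining the computation in the proof of Proposition~\ref{prop:Lp} to cover the case $p=1$ excluded there, and then that the integral equals $L(s,\mathbbm{1},\rho)$, by feeding the trivial representation into the formalism of \S\ref{sec:basicfun} through the unitary variant recorded in Remark~\ref{rem:basic-L}.

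For the $L^1$-bound I would start, exactly as in Proposition~\ref{prop:Lp}, from \eqref{eqn:KAK-int}, the $K$-bi-invariance of $f_{\rho,s}$ and $\mes(K)=1$, which give
$$ \int_{G(F)}|f_{\rho,s}(x)|\dd x = \sum_{\mu\in X_*(T)_-} c_\mu(q_F)\, q_F^{-\angles{\rho_{B^-},\mu}-\Re(s)\det_G(\mu)}\, D_T(\mu(\varpi)), $$
and then bound $D_T(\mu(\varpi))\leq c_2\, q_F^{\angles{2\rho_{B^-},\mu}}$ by \eqref{eqn:D_T} to reduce to $c_2\sum_\mu c_\mu(q_F)\, q_F^{\angles{\rho_{B^-},\mu}-\Re(s)\det_G(\mu)}$. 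The crucial point, which distinguishes $p=1$ from $p\geq 2$ and produces the sharp threshold $m$, is the inequality $\angles{\rho_{B^-},\mu}\leq m\det_G(\mu)$ valid for every $\mu$ with $c_\mu(q_F)\neq 0$: by Proposition~\ref{prop:classical} such a $\mu$ is of the form $\mu=\sum_{\nu\in\Supp(V)}a_\nu\nu$ with $a_\nu\in\Z_{\geq 0}$, and since every $\nu\in\Supp(V)$ has $\det_G(\nu)=1$ we get $\det_G(\mu)=\sum_\nu a_\nu$ while $\angles{\rho_{B^-},\mu}=\sum_\nu a_\nu\angles{\rho_{B^-},\nu}\leq m\sum_\nu a_\nu$. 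Combining this with $c_\mu(q_F)\leq c_\mu(1)$ (Proposition~\ref{prop:classical}) and $q_F\geq 1$, the integral is dominated by $c_2\sum_\mu c_\mu(1)\, q_F^{(m-\Re(s))\det_G(\mu)}$; grouping the $\mu$'s by the value $k=\det_G(\mu)\geq 0$ and noting that $\sum_{\det_G(\mu)=k} c_\mu(1)$ counts the tuples $(a_\nu)\in\Z_{\geq 0}^{\Supp(V)}$ with $\sum_\nu a_\nu=k$, hence equals $\binom{k+\dim V-1}{\dim V-1}$, this becomes $c_2\sum_{k\geq 0}\binom{k+\dim V-1}{\dim V-1}\, q_F^{(m-\Re(s))k}$, which converges precisely when $\Re(s)>m$. (One could instead invoke Corollary~\ref{prop:c_mu-growth} and Lemma~\ref{prop:Ehrhart} to reduce to a geometric series, as in Proposition~\ref{prop:Lp}.)

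For the value of the integral I would now assume $\Re(s)>m$, so that $f_{\rho,s}\in L^1(G(F))$ by the above. Let $\pi_c$ denote the trivial representation $\mathbbm{1}$ of $G(F)$: it is $K$-unramified, hence equals $\pi_c$ for its Satake parameter $c$, and it is unitary with constant matrix coefficient $\angles{\check v,\pi_c(\cdot)v}\equiv 1$ (for $K$-fixed $v,\check v$ with $\angles{\check v,v}=1$). Then $f_{\rho,s}(\cdot)\angles{\check v,\pi_c(\cdot)v}=f_{\rho,s}\in L^1(G(F))$, so Remark~\ref{rem:basic-L} applies and gives $\int_{G(F)}f_{\rho,s}(x)\angles{\check v,\pi_c(x)v}\dd x=L(s,\pi_c,\rho)$; the left-hand side is $\int_{G(F)}f_{\rho,s}(x)\dd x$ and $\pi_c=\mathbbm{1}$, which is the claim. (Equivalently and more explicitly: $\int_{G(F)}f_{\rho,s}=\sum_\mu f_{\rho,s}(\mu(\varpi))D_T(\mu(\varpi))$ with $D_T(\mu(\varpi))=\mes(K\mu(\varpi)K)=\mathcal{S}(\mathbbm{1}_{K\mu(\varpi)K})(c)$, so substituting $X=q_F^{-s}$ into the formal identity \eqref{eqn:L-c_mu}, which becomes an absolutely convergent series once $\Re(s)>m$, yields $L(\pi_c,\rho,q_F^{-s})=L(s,\mathbbm{1},\rho)$.)

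I expect the whole argument to be quite short. The one step that needs genuine care is the bound $\angles{\rho_{B^-},\mu}\leq m\det_G(\mu)$ on the support of $f_{\rho,s}$: it is what isolates the exact threshold $m$ and replaces the cruder estimate used when $p\geq 2$. Once $L^1$-membership is established, the equality $\int_{G(F)}f_{\rho,s}=L(s,\mathbbm{1},\rho)$ is a formal consequence of the construction of $f_{\rho,X}$ together with Remark~\ref{rem:basic-L}.
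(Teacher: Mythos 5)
Your proof is correct and follows essentially the same route as the paper: the integration formula \eqref{eqn:KAK-int} with the bound \eqref{eqn:D_T}, the key inequality $\angles{\rho_{B^-},\mu}\leq m\det_G(\mu)$ on the support of $f_{\rho,s}$, a count of the relevant $\mu$ with $\det_G(\mu)=k$ to reduce to a convergent series, and Remark~\ref{rem:basic-L} applied to the trivial representation --- your only deviation being that you replace the paper's $\epsilon$-shift bookkeeping (Corollary~\ref{prop:c_mu-growth} and Lemma~\ref{prop:Ehrhart}) by the direct bound $c_\mu(q_F)\leq c_\mu(1)$ plus a stars-and-bars count, which is a harmless variant. (One tiny nit: $\sum_{\det_G(\mu)=k}c_\mu(1)$ is at most, not equal to, $\binom{k+\dim V-1}{\dim V-1}$, since the sum runs only over anti-dominant $\mu$; only the upper bound is needed, so the argument stands.)
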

% Note that we do not assume $(\rho,V)$ irreducible.
\begin{proof}
  Let $\epsilon > 0$ be arbitrarily small. As in the proof of Proposition \ref{prop:Lp} (now with $p=1$), formula \eqref{eqn:KAK-int} gives
  $$ \int_{G(F)} |f_{\rho, s}(x)| \dd x \leq c_2 \sum_{\mu \in X_*(T)_-} c_\mu(q_F) q_F^{\angles{\rho_{B^-}, \mu} - \Re(s) \det_G(\mu)}. $$

  Again, by Corollary \ref{prop:c_mu-growth} we may discard the term $c_\mu(q_F)$ upon replacing $s$ by $s - \epsilon/2$. For every $\mu \in \mathcal{C}_\rho \cap X_*(T)_-$ we have $\angles{\rho_{B^-}, \mu} \leq \det_G(\mu)m$. We may collect terms according to $k = \det_G(\mu)$ by Lemma \ref{prop:Ehrhart}, at the cost of replacing $s-\epsilon/2$ by $s-\epsilon$. It remains to observe that
  $$ \sum_{k=0}^\infty q_F^{k (m - \Re(s) + \epsilon)} = \left( 1 - q_F^{m - \Re(s) + \epsilon} \right)^{-1} $$
  whenever $\Re(s) - \epsilon > m$.

  Suppose $\Re(s) > m$ so that $f_{\rho, s}$ is integrable. The second assertion follows from the Remark \ref{rem:basic-L} applied to the trivial representation $\mathbbm{1}$.
\end{proof}

The next result concerns Harish-Chandra's Schwartz space $\mathcal{C}(G)$, which is a strict inductive limit of Fréchet spaces, cf. \cite[III.6]{Wa03}. We refer to \cite{Wa03} for the definition of the height function $\sigma$ and Harish-Chandra's $\Xi$-function, etc.

\begin{proposition}\label{prop:tempered}
  For every $s$ with $\Re(s) \geq 0$, the function $f_{\rho,s}$ defines a tempered distribution on $G(F)$ in the sense that the linear functional
  \begin{align*}
    \mathcal{C}(G) & \longrightarrow \C, \\
    h & \longmapsto \int_{G(F)} h(x)f_{\rho,s}(x) \dd x
  \end{align*}
  on Harish-Chandra's Schwartz space $\mathcal{C}(G)$ is well-defined and continuous.
\end{proposition}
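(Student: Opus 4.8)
The plan is to reduce the claim to a standard integrability criterion for tempered distributions: a $K$-bi-invariant function $f$ on $G(F)$ defines a tempered distribution provided $\int_{G(F)} |f(x)| \Xi(x) \sigma(x)^{-N}\, \mathrm{d}x < \infty$ for some $N \geq 0$, since every $h \in \mathcal{C}(G)$ satisfies a bound $|h(x)| \leq C_h\, \Xi(x)\sigma(x)^{-N}$ for all $N$ (and, being $K$-bi-invariant on both sides after averaging, only the $K$-bi-invariant part of $h$ matters when pairing against $f_{\rho,s}$). So the whole problem becomes: show $f_{\rho,s}\cdot\Xi\cdot\sigma^{-N} \in L^1(G(F))$ for suitable $N$ when $\Re(s)\geq 0$.

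First I would invoke Harish-Chandra's estimates for $\Xi$ on $T(F)_+$ (see \cite{Wa03}): there are constants and an integer $d\geq 0$ with $\Xi(\mu(\varpi)) \leq C\,\delta_{B^-}(\mu(\varpi))^{1/2}(1+\sigma(\mu(\varpi)))^{d} = C\, q_F^{-\angles{\rho_{B^-},\mu}}(1+\sigma(\mu(\varpi)))^{d}$, and $\sigma(\mu(\varpi))$ grows linearly in $\mu$ (comparable to $\|\mu\|$). Then, exactly as in the proof of Proposition \ref{prop:Lp} with $p=1$, I apply the integration formula \eqref{eqn:KAK-int} together with the bound \eqref{eqn:D_T} for $D_T$:
\begin{align*}
  \int_{G(F)} |f_{\rho,s}(x)|\,\Xi(x)\,\sigma(x)^{-N}\,\mathrm{d}x
  &\leq c_2 C \sum_{\mu \in X_*(T)_-} c_\mu(q_F)\, q_F^{-\angles{\rho_{B^-},\mu}}\,q_F^{-\Re(s)\det_G(\mu)} \\
  &\qquad\qquad \times q_F^{-\angles{\rho_{B^-},\mu}}(1+\sigma(\mu(\varpi)))^{d}\,\sigma(\mu(\varpi))^{-N}\, q_F^{\angles{2\rho_{B^-},\mu}}.
\end{align*}
The powers of $q_F^{\pm\angles{\rho_{B^-},\mu}}$ cancel completely, leaving
$$ \int_{G(F)} |f_{\rho,s}(x)|\,\Xi(x)\,\sigma(x)^{-N}\,\mathrm{d}x \ \leq\ c_2 C \sum_{\mu \in X_*(T)_-} c_\mu(q_F)\,(1+\sigma(\mu(\varpi)))^{d-N}\, q_F^{-\Re(s)\det_G(\mu)}. $$
Now $c_\mu(q_F)$ is of polynomial growth in $\mu$ by Corollary \ref{prop:c_mu-growth}, and $\mu$ is supported on the strongly convex cone $\mathcal{C}_\rho$ by Corollary \ref{prop:basic-supp}; by Lemma \ref{prop:Ehrhart} the number of $\mu \in \mathcal{C}_\rho \cap X_*(T)_-$ with $\det_G(\mu)=k$ is bounded by a polynomial $R(k)$, and on this cone $\sigma(\mu(\varpi))$ is comparable to $k=\det_G(\mu)$ (since $\det_G>0$ is a supporting functional of the cone). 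Hence, collecting terms by $k$, the right-hand side is dominated by $\sum_{k\geq 0} P(k)\, k^{d-N}\, q_F^{-\Re(s)k}$ for some polynomial $P$. When $\Re(s)>0$ this converges for any $N$; when $\Re(s)=0$ it converges as soon as $N$ is chosen large enough that $d-N+\deg P < -1$. Since $N$ is at our disposal (the bound $|h(x)|\leq C_{h,N}\Xi(x)\sigma(x)^{-N}$ holds for every $N$), this proves absolute convergence and continuity of the functional.

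The one point requiring care — and the main obstacle — is the boundary case $\Re(s)=0$, where the geometric series degenerates: there one genuinely needs the full strength of the polynomial growth bounds (Corollary \ref{prop:c_mu-growth}, Lemma \ref{prop:Ehrhart}) and the freedom to take $N$ arbitrarily large, together with the linear comparison $\sigma(\mu(\varpi))\asymp \det_G(\mu)$ on the cone $\mathcal{C}_\rho\cap X_*(T)_-$, which in turn uses strong convexity from Corollary \ref{prop:basic-supp}(1). For $\Re(s)>0$ the argument is essentially a repetition of Proposition \ref{prop:L1} with the harmless extra factor $\Xi\cdot\sigma^{-N}$ absorbed. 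Continuity of the functional then follows from the uniform estimate $|h(x)|\leq C_{h}\Xi(x)\sigma(x)^{-N}$ together with the fact that the seminorm $h\mapsto C_h$ is continuous on $\mathcal{C}(G)$, so the pairing is bounded by a continuous seminorm on each Fréchet piece of the inductive limit.
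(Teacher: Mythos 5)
Your argument is correct, but it is packaged differently from the paper's proof. The paper first reduces to $s=0$ (on the support of $f_{\rho,s}$ one has $|\det_G|_F \leq 1$, so $\Re(s)\geq 0$ only shrinks the function), then establishes the pointwise majorization $0 \leq f_\rho(x) \leq c(1+\sigma(x))^r\Xi(x)$ on $T(F)_+$ --- combining the growth bound of Corollary \ref{prop:c_mu-growth} with the lower bound $\delta_{B^-}(t)^{1/2} \leq \Xi(t)$ from \cite[Lemme II.1.1]{Wa03} --- extends it to $G(F)$ by $K$-bi-invariance, and concludes by citing the general fact \cite[III.2, p.273]{Wa03} that functions dominated by $(1+\sigma)^r\Xi$ lie in $C^w_{\text{lisse}}(G)$ and define tempered distributions. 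You instead verify the integrability criterion $\int_{G(F)} |f_{\rho,s}(x)|\,\Xi(x)\,(1+\sigma(x))^{-N}\dd x < \infty$ directly: via the Cartan integration formula \eqref{eqn:KAK-int}, the bound \eqref{eqn:D_T}, and the opposite inequality $\Xi(t) \leq c\,\delta_{B^-}(t)^{1/2}(1+\sigma(t))^d$ of the same Waldspurger lemma, the $\delta_{B^-}^{1/2}$-factors cancel, and convergence follows from Corollary \ref{prop:c_mu-growth}, Lemma \ref{prop:Ehrhart}, and the comparison $\sigma(\mu(\varpi)) \asymp \det_G(\mu)$ on the strongly convex cone $\mathcal{C}_\rho$ of Corollary \ref{prop:basic-supp}. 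In effect you re-prove, in this special case, the Harish-Chandra integrability statement that the citation to \cite[p.273]{Wa03} encapsulates: you gain a self-contained estimate, at the cost of extra machinery --- in the paper's route neither Lemma \ref{prop:Ehrhart} nor the comparison $\sigma\asymp\det_G$ is needed, and the case $\Re(s)=0$ is no harder than $\Re(s)>0$, since the reduction to $s=0$ handles all $\Re(s)\geq 0$ at once, so the ``main obstacle'' you single out is an artefact of your route. Two cosmetic points: write $(1+\sigma(x))^{-N}$ rather than $\sigma(x)^{-N}$ (the latter misbehaves where $\sigma$ vanishes, e.g.\ at the term $\mu=0$ of your sum), and the continuity assertion is most cleanly stated as: the pairing is bounded by the Schwartz seminorm $\sup |h|\,\Xi^{-1}(1+\sigma)^{N}$ multiplied by the finite integral, which is what your final sentence indicates.
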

\begin{proof}
  Since $f_{\rho,s}(x) \neq 0$ implies $|\det_G(x)|_F \leq 1$, it suffices to treat the case $s=0$. Recall that $q_F^{-\angles{\rho_{B^-}, \mu}} = \delta_{B^-}(\mu(\varpi))^{1/2}$ for $\mu \in X_*(T)_-$. By the Definition \ref{def:basic} of $f_\rho$, the Lemma \ref{prop:c_mu-growth} and \cite[Lemme II.1.1]{Wa03}, there exist $r \in \R$ and $c > 0$ such that
  $$ 0 \leq f_\rho(x) \leq c(1+\sigma(x))^r \Xi(x), \quad x \in T(F)_+. $$

  Since $\Xi$ and $\sigma$ are both bi-invariant under $K$, the same estimates holds for all $x \in G(F)$. Therefore $f_\rho$ belongs to the space $C^w_\text{lisse}(G)$ of \cite[III.2]{Wa03}. Now the assertion follows from the discussion in \cite[p.273]{Wa03}.
\end{proof}

\section{Generalized Kostka-Foulkes polynomials}\label{sec:gKF}
Let $\Bbbk$ be an algebraically closed field of characteristic zero. We fix a connected reductive $\Bbbk$-group $G$, together with a chosen Borel pair $(B,T)$ for $G$. We will write the Levi decomposition as $B=TU$, $\mathfrak{b} = \mathfrak{t} \oplus \mathfrak{u}$. Accordingly, we have the Weyl group $W$, the weight lattice $X^*(T)$ and the monoid $X^*(T)_-$ of anti-dominant weights relative to $B$, etc.

These objects on the dual side should not be confused with those in the previous sections.

\subsection{Combinatorial setup}\label{sec:com-setup}
The main reference here is \cite{Pan10}, nonetheless we
\begin{inparaenum}[(i)]
  \item consider reductive groups and
  \item work with anti-dominant weights.
\end{inparaenum}

Consider the data
\begin{itemize}
  \item $P$: a parabolic subgroup of $G$ containing $B$, whose unipotent radical we denote by $U_P$;
  \item $\Psi$: a set \textit{with multiplicities} of weights in $X^*(T)$, such that
    \begin{itemize}
      \item $\Psi$ lies in a strongly convex cone in $X^*(T)_\R$, that is, a convex cone containing no lines,
      \item $\Psi$ is the set with multiplicities of $T$-weights of a $P$-stable subspace $N$ of a finite-dimensional representation $W$ of $G$.
    \end{itemize}
\end{itemize}

Under these assumptions, we may define $\mathcal{P}_\Psi(\nu; q) \in \Z[q]$, for each $\nu \in X^*(T)$, by requiring
\begin{gather}\label{eqn:P-Psi}
  \prod_{\alpha \in \Psi} (1 - qe^{-\alpha}) = \sum_{\nu \in X^*(T)} \mathcal{P}_\Psi(\nu; q) e^\nu.
\end{gather}

\begin{definition}\label{def:gKF}
  Let $\lambda \in X^*(T)_-$ and $\mu \in X^*(T)$, define the corresponding generalized Kostka-Foulkes polynomial as
  $$ m^\mu_{\lambda, \Psi}(q) := \sum_{w \in W} (-1)^{\ell(w)} \mathcal{P}_\Psi \left( w(\lambda + \rho_{B^-}) - (\mu + \rho_{B^-}); q \right) \quad \in \Z[q]. $$
\end{definition}

In the examples below we take $P=B$.

\begin{example}\label{ex:q-analogue}
  Consider the special case in which $W := \mathfrak{g}$ is the adjoint representation $\Ad$ of $G$, and $N := \mathfrak{u}$ is a $B$-stable subspace. Thus $\Psi = \Sigma_B$ is the set of positive roots relative to $B$; it certainly lies in a strongly convex cone. We obtain
  $$ \prod_{\alpha \in \Sigma_{B^-}} (1 - q e^\alpha)^{-1} = \prod_{\alpha \in \Sigma_B} (1 - qe^{-\alpha})^{-1} = \sum_\nu \mathcal{P}_\Psi(\nu; q) e^\nu. $$
  Hence $\mathcal{P}_\Psi(\nu; q)$ equals the function $\mathcal{P}(\nu; q)$ introduced in \S\ref{sec:Satake-inversion}, defined on the root datum dual to that of $(G, T)$ and relative to $B^-$. Consequently we recover Lusztig's $q$-analogue \eqref{eqn:K}, namely
  $$ m^\mu_{\lambda, \Psi}(q) = K_{\lambda, \mu}(q), $$
  in the anti-dominant setting, which is precisely our choice throughout \S\ref{sec:L-basic}.

  We record some well-known properties of $K_{\lambda, \mu}(q)$, for $\lambda, \mu \in X^*(T)_-$.
  \begin{enumerate}
    \item $K_{\lambda, \lambda} = 1$.
    \item $K_{\lambda, \mu} \neq 0$ only if $\mu \leq \lambda$ for the Bruhat order relative to $B^-$.
    \item $\deg_q K_{\lambda, \mu} = \angles{\lambda-\mu, \check{\rho}_{B^-}}$ whenever $\mu \leq \lambda$.
    \item $K_{\lambda,\mu}(1)$ equals the weight multiplicity $\dim_\Bbbk V(\lambda)_\mu$ of the irreducible representation $V(\lambda)$ of highest weight $\lambda$.
  \end{enumerate}

  The first two properties are actually shared by all generalized Kostka-Foulkes polynomials $m^\mu_{\lambda, \Psi}(q)$ with $\Psi \subset \Sigma_B$; see \cite[Lemma 2.3]{Pan10}.
\end{example}

\begin{example}\label{ex:basic-fcn}
  Assume that we are given a short exact sequence $1 \to \Gm \to G \to G_0 \to 1$ with $G_0$ semisimple. It induces a homomorphism $X^*(T) \to X^*(\Gm) = \Z$ denoted as $\det_G$. Let $(\rho, V)$ be a representation of $G$ such that $\rho(z) = z\cdot\identity$ for all $z \in \Gm$. Now take $N := V \oplus \mathfrak{u}$, a $B$-stable subspace of some representation of $G$ (where $\mathfrak{u}$ is acted upon by $\Ad|_B$). We have
  $$ \Psi = \Supp(V) \sqcup \Sigma_B $$
  where $\Supp(V)$ is the set with multiplicities of $T$-weights, as usual.  The elements of $\Supp(V)$ lie on $\det_G = 1$, whereas the elements of $\Sigma_B$ lie in a chamber in the subspace $\det_G = 0$. Evidently, $\Psi$ is contained in a strongly convex cone in $X^*(T)_\R$. Hence the polynomials $m^\mu_{\lambda, \Psi}(q)$ are well-defined.

  Notice that in this case, $m^\mu_{0, \Psi} \neq 0$ only when $\det_G(\mu) \geq 0$.
\end{example}

\subsection{Geometric setup}
We refer to \S\ref{sec:intro} for the formalism of $G$-varieties and $G$-linearized sheaves.

Let $H$ be a $\Bbbk$-subgroup of $G$. Let $Z$ be an $H$-variety, we define the associated \textit{homogeneous fiber space} as
$$ G \utimes{H} Z := (G \times Z) \bigg/ ((gh,z) \sim (g, hz), \forall h \in H). $$
The quotient here is to be understood as the \textit{geometric quotient} $(G \times Z)/H$, where $h \in H$ acts by $h(g,z) = (gh^{-1}, hz)$. In order to form the quotient as a $\Bbbk$-variety we should assume that $Z$ can be covered by $H$-stable quasi-projective open subsets, which is possible under mild conditions, eg. whenever $Z$ is quasi-projective. We refer to \cite[\S 2]{Ti11} or \cite[II. \S 4]{AGIV} for details.

In what follows we assume implicitly that $Z$ has all the required properties to ensure the existence of $G \utimes{H} Z$ as a $\Bbbk$-variety. The $G$-action on $G \utimes{H} Z$ is descended from the left translation on the first component of $G \times Z$.  The natural projection $G \utimes{H} Z \to G/H$ descended from $\text{pr}_1: G \times Z \to G$ is locally trivial in the étale topology, with fibers isomorphic to $Z$.  A typical element in $G \utimes{H} Z$ is denoted by $[g,z]$, signifying the image of $(g,z) \in G \times H$ under the quotient morphism.

\begin{remark}\label{rem:locally-trivial}
  The bundles $G \utimes{H} Z \to G/H$ are locally trivial in the Zariski topology for all $Z$ if and only if so is $G \to G/H$. The latter condition holds when $H$ is a parabolic subgroup of $G$, due to Bruhat decomposition. See the discussion in \cite[II. \S 4.8]{AGIV}.
\end{remark}

We will make use of the following avatar of projection formula: suppose that $Z$ is actually endowed with a $G$-variety structure, and let $Z'$ be any $H$-variety, then there is an isomorphism of $G$-varieties
\begin{equation}\label{eqn:proj-formula} \begin{split}
  G \utimes{H} (Z' \times Z) & \rightiso (G \utimes{H} Z') \times Z \\
  [g, (z',z)] & \mapsto ([g,z'], gz).
\end{split}\end{equation}
Here $Z' \times Z$ is equipped with the diagonal $H$-action, and $(G \utimes{H} Z') \times Z$ is equipped with the diagonal $G$-action. In particular $G \utimes{H} Z \simeq G/H \times Z$.

Next, we consider the ``collapsing'' of homogeneous fiber spaces \cite{Kem76}. Let $P$ be a parabolic subgroup of $G$. Let $Z$ be a closed $G$-variety and $W$ be a $P$-subvariety of $Z$. We have the commutative diagram of $G$-varieties
$$\xymatrix{
  G \utimes{P} W \ar[r] \ar[rd]_{\pi} & G \utimes{P} Z \ar[r]^{\sim} & G/P \times Z \ar[ld]^{\text{pr}_2} \\
  & Z &
}$$
where $\pi$ sends $[g,w]$ to $gw$. Also note that $G \utimes{P} W \to G \utimes{P} Z$ is a closed immersion.

\begin{lemma}\label{prop:collapsing}
  For $Z$ and $W$ as above,
  \begin{enumerate}
    \item $\pi: G \utimes{P} W \to Z$ is a projective morphism;
    \item the set-theoretic image of $\pi$ equals $G \cdot W$ and is a closed subvariety of $Z$.
  \end{enumerate}
\end{lemma}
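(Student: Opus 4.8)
The plan is to deduce both assertions from the commutative triangle displayed just before the statement, using only two standard inputs: the homogeneous space $G/P$ is a projective variety, and the class of projective morphisms is closed under composition and base change (and a projective morphism is proper, hence closed). First I would recall why the two arrows out of $G \utimes{P} W$ in that triangle behave as claimed. The isomorphism $G \utimes{P} Z \rightiso G/P \times Z$ is the special case $Z' = \mathrm{pt}$ of the projection formula \eqref{eqn:proj-formula} — available because $Z$ carries a $G$-variety structure — under which $[g,z] \mapsto (gP, gz)$; composing with $\mathrm{pr}_2$ therefore sends $[g,w]$ to $gw$, so the triangle commutes. And $G \utimes{P} W \to G \utimes{P} Z$ is a closed immersion: since $P$ is parabolic, $G \to G/P$ is Zariski-locally trivial (Remark \ref{rem:locally-trivial}), so over a trivializing open $U \subseteq G/P$ this map becomes $U \times W \hookrightarrow U \times Z$, which is a closed immersion because $W \hookrightarrow Z$ is one.

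For (1): the morphism $\mathrm{pr}_2: G/P \times Z \to Z$ is the base change along $Z \to \Spec \Bbbk$ of the structure morphism $G/P \to \Spec \Bbbk$, and the latter is projective because $G/P$ is a projective variety; hence $\mathrm{pr}_2$ is projective. A closed immersion is projective, and projective morphisms compose, so $\pi$ — being the composite of the closed immersion $G \utimes{P} W \hookrightarrow G \utimes{P} Z$, the isomorphism $G \utimes{P} Z \cong G/P \times Z$, and $\mathrm{pr}_2$ — is projective.

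For (2): a projective morphism is proper, hence closed, so $\pi\bigl( G \utimes{P} W \bigr)$ is a closed subset of $Z$. Its points are exactly $\{ gw : g \in G,\ w \in W \} = G \cdot W$. Finally $G \utimes{P} W$ is irreducible: it is the image of the irreducible variety $G \times W$ under the surjective quotient morphism $G \times W \to G \utimes{P} W$ (equivalently, a fiber bundle over the irreducible base $G/P$ with irreducible fiber $W$), so $G \cdot W = \pi(G \utimes{P} W)$ is irreducible, and being closed it is a closed subvariety of $Z$. The argument is essentially bookkeeping; the only point needing a little care is the reduction to trivializing opens to see that $G \utimes{P} W \hookrightarrow G \utimes{P} Z$ is a closed immersion and that the displayed triangle genuinely commutes, so I do not anticipate a substantial obstacle.
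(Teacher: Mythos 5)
Your proof is correct and follows essentially the same route as the paper: projectivity of $\mathrm{pr}_2$ from projectivity of $G/P$, the closed immersion into $G \utimes{P} Z$, and closedness of images under projective (proper) morphisms. You merely spell out details the paper leaves implicit (commutativity of the triangle, local triviality for the closed immersion, irreducibility of the image), so there is no substantive difference.
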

\begin{proof}
  Note that $\text{pr}_2: G/P \times Z \to Z$ is projective since $G/P$ is a projective $\Bbbk$-variety. Hence $\pi$ is projective as well, by the diagram above. Since projective morphisms have closed images, the second assertion follows.
\end{proof}

For any finite-dimensional representation $W$ of $P$, the construction above yields a $G$-equivariant vector bundle $p: G \utimes{P} W \to G/P$; see Remark \ref{rem:locally-trivial}. We denote by $\mathscr{L}_{G/P}(W)$ the locally free $\mathcal{O}_{G/P}$-module of its sections. More precisely, for every open subset $U \subset G/P$ we have
$$ \Gamma(U, \mathscr{L}_{G/P}(W)) = \left\{ s: p^{-1}U \xrightarrow{\text{morphism}} W, \quad \forall h \in P, \; s(gh^{-1}) = hs(g) \right\}. $$

This $\mathcal{O}_{G/P}$-module is canonically $G$-linearized. The natural $G$-action on $\Gamma(G/P, \mathscr{L}_{G/P}(W))$ is $gs: x \mapsto s(g^{-1}x)$. More generally, $G$ acts on the cohomology groups $H^i(G/P, \mathscr{L}_{G/P}(W))$. We have a canonical isomorphism $\mathscr{L}_{G/P}(W^\vee) \simeq \mathscr{L}_{G/P}(W)^\vee$.

We record the following standard facts.
\begin{proposition}\label{prop:bundle-pushforward}
  Let $p: V \to X$ be a vector bundle with sheaf of sections $\mathcal{F}$.
  \begin{enumerate}
    \item For every locally free $\mathcal{O}_X$-module $\mathscr{L}$ of finite rank, we have $p_* p^* \mathscr{L} \simeq \mathscr{L} \otimes \Sym(\mathcal{F}^\vee) = \bigoplus_{k \geq 0} \left( \mathscr{L} \otimes \Sym^k(\mathcal{F}^\vee) \right)$.
    \item For every quasi-coherent $\mathcal{O}_V$-module $\mathcal{G}$, we have $H^i(V, \mathcal{G}) \simeq H^i(X, p_* \mathcal{G})$ for all $i$.

    These isomorphisms are all canonical. Recall another elementary fact:
    \item Cohomology on noetherian schemes commutes with direct limits. % Kempf: it works for quasi-noetherian schemes as well.
  \end{enumerate}
\end{proposition}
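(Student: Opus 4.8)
All three statements are standard facts, so the plan is to fix conventions and then invoke three well-known properties of affine morphisms and of cohomology on noetherian schemes. I would first record that the vector bundle $p \colon V \to X$ with sheaf of sections $\mathcal{F}$ is by definition the relative spectrum $\mathbf{Spec}_X\bigl(\Sym(\mathcal{F}^\vee)\bigr)$, with $\mathcal{F}$ locally free of finite rank; thus $p$ is an affine morphism and $p_*\mathcal{O}_V = \Sym(\mathcal{F}^\vee) = \bigoplus_{k \ge 0}\Sym^k(\mathcal{F}^\vee)$ as sheaves of graded $\mathcal{O}_X$-algebras. This identity is the engine of part (1): applying the projection formula $p_*\bigl(p^*\mathscr{L} \otimes_{\mathcal{O}_V} \mathcal{M}\bigr) \simeq \mathscr{L} \otimes_{\mathcal{O}_X} p_*\mathcal{M}$ — valid for $\mathscr{L}$ locally free of finite rank, as it is checked Zariski-locally over a trivializing open where it reduces to the tautology $M^{\oplus r} \otimes N \cong (M \otimes N)^{\oplus r}$ — with $\mathcal{M} = \mathcal{O}_V$ gives $p_* p^*\mathscr{L} \simeq \mathscr{L} \otimes \Sym(\mathcal{F}^\vee)$, and the direct sum decomposition together with the canonicity of all the maps follows from the grading and the naturality of the projection-formula isomorphism.

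For part (2) the point is again that $p$ is affine, so $R^j p_*\mathcal{G} = 0$ for $j > 0$ and every quasi-coherent $\mathcal{G}$ (higher cohomology of a quasi-coherent sheaf on an affine scheme vanishes, applied over an affine open cover of $X$). Either the Leray spectral sequence $H^i\bigl(X, R^j p_*\mathcal{G}\bigr) \Rightarrow H^{i+j}(V, \mathcal{G})$ then collapses onto the row $j = 0$, or, more concretely, one compares the \v{C}ech complexes of $\mathcal{G}$ and $p_*\mathcal{G}$ with respect to an affine open cover $\{U_i\}$ of $X$ and its preimage $\{p^{-1}U_i\}$ of $V$, which coincide term by term; either way one obtains the canonical isomorphism $H^i(V, \mathcal{G}) \simeq H^i(X, p_*\mathcal{G})$. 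Part (3) is the classical statement that on a noetherian topological space — where every open is quasi-compact, so $\Gamma(U, -)$ commutes with filtered colimits — the functors $H^i(X, -)$ commute with filtered direct limits of sheaves of abelian groups; I would simply cite this, noting that the spaces to which we later apply it ($V$, $G/P$, $G \utimes{P} W$ and the like) are all varieties, hence noetherian.

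There is no genuine obstacle here; the only point that merits a moment's care is the duality convention, namely that ``sheaf of sections $\mathcal{F}$'' is normalized so that $p_*\mathcal{O}_V = \Sym(\mathcal{F}^\vee)$ rather than $\Sym(\mathcal{F})$, since this is exactly what makes $\mathcal{F}^\vee$, and not $\mathcal{F}$, appear on the right-hand side of (1).
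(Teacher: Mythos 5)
Your proposal is correct and follows essentially the same route as the paper: part (1) via the projection formula together with $p_*\mathcal{O}_V = \Sym(\mathcal{F}^\vee)$, and part (2) via vanishing of higher direct images for the affine morphism $p$ and degeneration of the Leray spectral sequence. The extra details you supply (local verification of the projection formula, the \v{C}ech alternative, the noetherian hypothesis for part (3)) are fine but not needed beyond what the paper's two-line proof already records.
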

\begin{proof}
  The first isomorphism is an easy consequence of the projection formula and the identification $p_* \mathcal{O}_V = \Sym(\mathcal{F}^\vee)$. The second isomorphism results from the degeneration for the Leray spectral sequence for the affine morphism $p: V \to X$.
\end{proof}

Consider a parabolic subgroup $P$ containing $B$ with unipotent radical $U_P$. The inclusion $B \hookrightarrow P$ induces a surjective homomorphism $B/B_\text{der}U = B/U \twoheadrightarrow P/P_\text{der} U_P$ between $\Bbbk$-tori. Hence
$$ X^*(P) := \Hom(P, \Gm) \hookrightarrow \Hom(B, \Gm) = X^*(T) $$
where the Hom-groups are taken in the category of $\Bbbk$-groups. For $\mu \in X^*(P)$, we write $\Bbbk_\mu$ for the corresponding one-dimensional representation of $P$, and use the shorthand
$$ \mathscr{L}_{G/P}(\mu) := \mathscr{L}_{G/P}(\Bbbk_\mu), $$
thus $\mathscr{L}_{G/P}(-\mu) \simeq \mathscr{L}_{G/P}(\mu)^\vee$ canonically. We need the following vanishing theorem due to Panyushev.

\begin{theorem}[{\cite[Theorem 3.1]{Pan10}}]\label{prop:vanishing}
  Let $N$ be a representation of $P$ satisfying the conditions in \S\ref{sec:com-setup}; in particular we have the homogeneous fibration $p: G \utimes{P} N \to G/P$ and a collapsing morphism $\pi: G \utimes{P} N \to G \cdot N \subset W$. If $\pi$ is generically finite, then
  $$ H^i \left( G \utimes{P} N, \; p^* \mathscr{L}_{G/P}(\mu) \right) = 0 \quad \text{ for all } i \geq 1, $$
  whenever
  \begin{gather}\label{eqn:mu-vanishing-condition}
    \mu + \sum_{\alpha \in \Psi} \alpha - \sum_{\alpha \in \Sigma_P} \alpha \in X^*(T)_-. 
  \end{gather}
  Here $\Psi$ is the set with multiplicities of the $T$-weights of $N$, and $\Sigma_P$ is the set of roots in $\mathfrak{u}_P$.
\end{theorem}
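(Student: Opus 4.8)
The plan is to transport the cohomology from $\tilde X := G \utimes{P} N$ down to the affine variety $Y := G\cdot N \subset W$ via the collapsing morphism $\pi\colon \tilde X \to Y$, which is projective by Lemma~\ref{prop:collapsing}. Since $Y$ is closed in the vector space $W$ it is affine, so higher cohomology of quasi-coherent sheaves on $Y$ vanishes; feeding this into the Leray spectral sequence for $\pi$ shows it suffices to prove
\[ R^i \pi_*\bigl(p^*\mathscr{L}_{G/P}(\mu)\bigr) = 0, \qquad i \geq 1 . \]
This is a vanishing statement for higher direct images under a proper, generically finite morphism out of the smooth variety $\tilde X$ (smooth because $p\colon\tilde X\to G/P$ is a vector bundle over the smooth projective base $G/P$), and it is where all the work lies. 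Pushing forward instead along $p$ and invoking Proposition~\ref{prop:bundle-pushforward} would reduce matters to the vanishing of $H^i\bigl(G/P,\,\mathscr{L}_{G/P}(\mu)\otimes\Sym^k\mathscr{L}_{G/P}(N)^\vee\bigr)$ for all $k$, which seems much harder to control directly.

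\textbf{Rewriting as a twist of $\omega_{\tilde X}$.} Write $\mathcal E := \mathscr{L}_{G/P}(N)$ for the sheaf of sections of $p$. Then the relative canonical sheaf is $\omega_{\tilde X/(G/P)} = p^*(\det\mathcal E)^\vee = p^*\mathscr{L}_{G/P}\bigl(-\textstyle\sum_{\alpha\in\Psi}\alpha\bigr)$, since $P$ acts on $\det N$ through the character $\sum_{\alpha\in\Psi}\alpha$. On the other hand the tangent space of $G/P$ at the base point is $\mathfrak g/\mathfrak p$, whose $T$-weights are the negatives of the roots in $\Sigma_P$; hence, in the conventions of \S\ref{sec:gKF}, $\omega_{G/P} = \mathscr{L}_{G/P}\bigl(\sum_{\alpha\in\Sigma_P}\alpha\bigr)$ (a quick check: for $G=\SL_2$, $P=B$ this reads $\omega_{\mathbb P^1}=\mathcal O(-2)=\mathscr{L}_{G/B}(\alpha)$). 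Combining the two,
\[ p^*\mathscr{L}_{G/P}(\mu) \;\simeq\; \omega_{\tilde X} \otimes \mathcal L, \qquad \mathcal L := p^*\mathscr{L}_{G/P}(\nu), \quad \nu := \mu + \sum_{\alpha\in\Psi}\alpha - \sum_{\alpha\in\Sigma_P}\alpha . \]
Now $\nu$ lies in $X^*(P)$, being a $\Z$-combination of the $P$-characters $\mu$, $\det N$ and $\det\mathfrak u_P$; and hypothesis~\eqref{eqn:mu-vanishing-condition} says precisely that $\nu\in X^*(T)_-$. An anti-dominant character of $P$ yields a globally generated line bundle $\mathscr{L}_{G/P}(\nu)$ on $G/P$ (pull back to $G/B$ and use that $H^0(G/P,\mathscr{L}_{G/P}(\nu))=H^0(G/B,\mathscr{L}_{G/B}(\nu))$ generates there), so $\mathcal L$ is globally generated on $\tilde X$.

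\textbf{Relative vanishing.} Being globally generated, $\mathcal L$ has non-negative degree on every curve in $\tilde X$, in particular on every curve contracted by $\pi$, so $\mathcal L$ is $\pi$-nef; and it is $\pi$-big because $\pi$ is generically finite, so its generic fibre is $0$-dimensional. Since $\tilde X$ is smooth and $\Bbbk$ has characteristic $0$, the relative Kawamata--Viehweg vanishing theorem gives $R^i\pi_*(\omega_{\tilde X}\otimes\mathcal L)=0$ for $i\geq 1$, that is $R^i\pi_*\bigl(p^*\mathscr{L}_{G/P}(\mu)\bigr)=0$ for $i\geq 1$, completing the argument.

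\textbf{Main obstacle.} The delicate part is the bookkeeping in the second step: one must pin down the equivariance normalisation in the definition of $\mathscr{L}_{G/P}(-)$ and the anti-dominant/$B^-$ conventions of \S\ref{sec:gKF} carefully enough that the twisting character comes out as exactly $\mu+\sum_{\alpha\in\Psi}\alpha-\sum_{\alpha\in\Sigma_P}\alpha$, and so that ``$\nu\in X^*(T)_-$'' translates into global generation of $\mathscr{L}_{G/P}(\nu)$ rather than into ``anti-ampleness''. One should also confirm that relative Kawamata--Viehweg vanishing is available in the required generality --- an arbitrary projective morphism onto an affine target, twisted by a $\pi$-nef and $\pi$-big line bundle --- which is standard over a field of characteristic zero.
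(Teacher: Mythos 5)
Your proof is correct, but it runs along a different track than the paper's (which follows Panyushev). You push the cohomology down along the collapsing $\pi$ to the affine image $G\cdot N$ (Leray plus Serre vanishing on the affine base), rewrite $p^*\mathscr{L}_{G/P}(\mu)$ as $\omega_{\tilde X}\otimes p^*\mathscr{L}_{G/P}(\nu)$ with $\nu=\mu+\sum_{\alpha\in\Psi}\alpha-\sum_{\alpha\in\Sigma_P}\alpha$, observe that the hypothesis makes $\nu$ anti-dominant, hence $\mathscr{L}_{G/P}(\nu)$ globally generated (your convention check matches the paper's Borel--Weil--Bott normalisation, where anti-dominant weights carry the sections), and conclude by relative Kawamata--Viehweg vanishing, the twist being $\pi$-nef by global generation and $\pi$-big because $\pi$ is generically finite. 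The paper instead performs the same canonical-bundle bookkeeping but absorbs the twist geometrically: it forms the auxiliary homogeneous bundle $\mathbf{U}=G\utimes{P}(N\times\Bbbk_{\nu'})$ with $\nu'=-\nu$ dominant, identifies $\omega_{\mathbf U}=\eta^*\mathscr{L}_{\mathbf Z}(\gamma-\nu')$, notes that $\mathbf U$ again collapses generically finitely into $W\times V(\nu')$, invokes Kempf's vanishing theorem for such collapsings, and extracts the desired statement as the degree-zero graded piece of $\eta_*\omega_{\mathbf U}$. So the ``add a highest-weight line'' trick in the paper is precisely a device to replace your appeal to Kawamata--Viehweg by the more elementary (and homogeneous-bundle-specific) Grauert--Riemenschneider-type theorem of Kempf; your route is shorter and makes the birational-geometric content (nef and big twist of $\omega_{\tilde X}$ along a proper generically finite map) transparent, at the cost of heavier general machinery, which is legitimately available here since $\Bbbk$ has characteristic zero and $\tilde X$ is smooth and quasi-projective. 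Both arguments turn on exactly the same identification of $\omega_{\tilde X}$ and the same translation of condition \eqref{eqn:mu-vanishing-condition}; the paper's version incidentally yields the vanishing for all twists $\gamma-(k+1)\nu'$, $k\geq 0$, of which only $k=0$ is used.
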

\begin{proof}
  We reproduce the proof in \cite{Pan10} under our setup. Let $\nu \in X^*(T)_+$ and set $\mathbf{Z} := G \utimes{P} N$ and $\mathbf{U} := G \utimes{P} (N \times \Bbbk_\nu)$. Write $\mathscr{L}_{\mathbf{Z}}(\cdots) := p^* \mathscr{L}_{G/P}(\cdots)$; the same for $\mathscr{L}_{\mathbf{U}}(\cdots)$. The natural projection $\eta: \mathbf{U} \to \mathbf{Z}$ makes $\mathbf{U}$ into the total space of a line bundle over $\mathbf{Z}$ whose sheaf of sections is $\mathscr{L}_{\mathbf{Z}}(\nu)$. Let
  $$ \gamma := - \sum_{\alpha \in \Psi} \alpha + \sum_{\alpha \in \Sigma_P} \alpha . $$

  Note that $\gamma \in X^*(P)$. Let $\omega_{\mathbf{U}}$ (resp. $\omega_{G/P}$) denote the dualizing sheaf on $\mathbf{U}$ (resp. $G/P$). Using the smooth fibration $\mathbf{U} \to G/P$ we see that $\omega_{\mathbf{U}}$ equals the tensor product of
  $$ \mathscr{L}_{\mathbf{U}} \left( \det N^\vee \otimes \Bbbk_{-\nu} \right), $$
  the sheaf of relative differentials of top degree, with the pullback of $\omega_{G/P}$ to $\mathbf{U}$. However, $\omega_{G/P} = \mathscr{L}_{G/P}(\det \mathfrak{u}_P)$ as $(\mathfrak{g}/\mathfrak{p})^\vee = \mathfrak{p}^\perp \simeq \mathfrak{u}_P$ via the Killing form of $\mathfrak{g}_\text{der}$. Thus we obtain
  $$ \omega_{\mathbf{U}} = \mathscr{L}_{\mathbf{U}}\left( \det N^\vee \otimes \det\mathfrak{u}_P \otimes \Bbbk_{-\nu} \right) = \eta^* \mathscr{L}_{\mathbf{Z}}(\gamma - \nu). $$

  Notice that each identification is equivariant. Proposition \ref{prop:bundle-pushforward} applied to $\eta$ yields
  \begin{gather}\label{eqn:omega_U-cohomology}
      H^i(\mathbf{U}, \omega_{\mathbf{U}}) = H^i \left( \mathbf{Z}, \; \bigoplus_{k \geq 0} \mathscr{L}_{\mathbf{Z}}(\gamma - (k+1)\nu) \right).
  \end{gather}

  Let $V(\nu)$ be the irreducible representation of $G$ with highest weight $\nu$ relative to $B$. Embed $\Bbbk_\nu$ into $V(\nu)$ as a the highest weight subspace. We deduce the collapsing
  $$ \pi': \mathbf{U} = G \utimes{P} (N \times \Bbbk_\nu) \twoheadrightarrow G \cdot (N \times \Bbbk_\nu) \subset W \times V(\nu). $$
  It is proper by Lemma \ref{prop:collapsing} and generically finite since $\pi$ is. Therefore Kempf's vanishing theorem \cite[Theorem 4]{Kem76} asserts that $H^i(\mathbf{U}, \omega_{\mathbf{U}})$ vanishes for all $i \geq 1$. Taking the summand $k=0$ in \eqref{eqn:omega_U-cohomology} with $\mu := \gamma - \nu$ gives the vanishing of $H^i(\mathbf{Z}, p^* \mathscr{L}_{G/P}(\mu))$. The condition $\nu \in X^*(T)_+$ translates into \eqref{eqn:mu-vanishing-condition}.
\end{proof}

\subsection{Poincaré series}
\begin{definition}\label{def:P-series}
  Let $M$ be $\Bbbk$-vector space, possibly of infinite dimension.
  \begin{itemize}
    \item Assume $M$ is equipped with a $\Z$-grading, or equivalently with a $\Gm$-action, such that each graded piece $M_k$ ($k \in \Z$) is finite-dimensional. Assume moreover that $M_k = \{0\}$ for $k < 0$. Introduce an indeterminate $q$ and define the Poincaré series of $M$ as
      $$ \mathbf{P}(M; q) := \sum_{k \geq 0} (\dim_\Bbbk M_k) q^k \quad \in \Z\llbracket q \rrbracket. $$
    \item The multi-graded version will also be needed. Let $\mathfrak{X}$ be a commutative monoid (written additively) that is finitely generated and isomorphic to some submonoid of $\Z^r_{\geq 0}$ for some $r$. Such monoids are called \textit{positive affine monoids} in \cite[2.15]{BG09}. Assume that $M$ is equipped with a grading by $\mathfrak{X}$, such that the graded piece $M_\mu$ is finite-dimensional for every $\mu \in \mathfrak{X}$. We may define its Poincaré series
      $$ \mathbf{P}(M) := \sum_{\mu \in \mathfrak{X}} (\dim_\Bbbk M_\mu) e^\mu $$
      which belongs to the completion $\Z\llbracket \mathfrak{X} \rrbracket$ of the monoid ring $\Z[\mathfrak{X}]$ with respect to the ideal generated by non-invertible elements in $\mathfrak{X}$. The previous case corresponds to $\mathfrak{X} = \Z_{\geq 0}$.
    \item Furthermore, assume that $M$ is a representation of $G$ admitting a compatible grading by $\mathfrak{X}$ as above, such that $\dim_\Bbbk M_\mu < \infty$ for all $\mu$. For every finite-dimensional irreducible representation $V$ of $V$, let $M[V]$ be the $V$-isotypic subrepresentation of $M$. Define the formal (infinite) sum
    $$ \mathbf{P}(M) := \sum_{[V]} \mathbf{P}(M[V]) [V] $$
      where $[V]$ ranges over the equivalence classes of finite-dimensional irreducible representations.
    \item The previous case generalizes to virtual representations of $G$, by setting $\mathbf{P}(M_1 - M_2) = \mathbf{P}(M_1) - \mathbf{P}(M_2)$. % Let $K_0(\mathbf{Rep}(G; \mathfrak{X}))$ be the Grothendieck group of $G$-representations with a compatible $\mathfrak{X}$-grading satisfying the conditions above. Then $\mathbf{P}(\cdot)$ gives a homomorphism from $K_0(\mathbf{Rep}(G; \mathfrak{X}))$ to the aforementioned group of Laurent series.
  \end{itemize}
\end{definition}

Let $P$ be a parabolic subgroup of $G$ containing $B$, and $N$ be a finite-dimensional representation of $P$. Note that $\Gm$ acts on $N$ by dilation, which clearly commutes with the $P$-action. We deduce a $G \times \Gm$-action on $G \utimes{P} N$, namely
$$ (g,z) \cdot [x,n] = [gx, z^{-1} n], \quad g \in G, \; z \in \Gm, \; [x,n] \in G \utimes{P} N. $$

The bundle map $p: G \utimes{P} N \to G/P$ is $G \times \Gm$-equivariant if we let $\Gm$ act trivially on $G/P$. Let $\mathcal{F}$ be a $G$-linearized locally free $\mathcal{O}_{G/P}$-module of finite rank. Then $p^* \mathcal{F}$ is canonically $G \times \Gm$-linearized. All in all, we obtain a $G \times \Gm$-representation $H^i(G \utimes{P} N, p^* \mathcal{F})$ for each $i$, or equivalently, a $G$-representation with a compatible $\Z$-grading. The grading is given by the degrees along the fibers of $p$. In concrete terms, the $k$-th graded piece is identified with
$$ H^i(G/P, \mathcal{F} \otimes \Sym^k (N^\vee)) $$
for every $k \in \Z$, by Proposition \ref{prop:bundle-pushforward}.

In view of the Definition \ref{def:P-series}, we may define
\begin{gather}\label{eqn:EP}
  \chi \left( G \utimes{P} N, \; p^* \mathcal{F} \right) := \sum_{i \geq 0} (-1)^i H^i \left( G \utimes{P} N, \; p^* \mathcal{F} \right),
\end{gather}
an element of the Grothendieck group of representations of $G$ graded by $\Z_{\geq 0}$, each graded piece being a finite-dimensional representation of $G$. Thus we may consider its Poincaré series.

The following result is well-known in the case for Lusztig's $q$-analogues; see \cite{He76, He80, Bry89, Bro93}. Here we state and reprove it for reductive groups and anti-dominant weights, following the arguments in \cite[Theorem 3.8]{Pan10} for generalized Kostka-Foulkes polynomials. Our convention makes the annoying contragredients in \textit{loc. cit.} disappear.

\begin{theorem}\label{prop:EP-gKF}
  Let $N$, $P$ and $\Psi$ be as in \S\ref{sec:com-setup} and assume that $P=B$. For every $\mu \in X^*(T)_-$, we have the equality
  $$ \mathbf{P} \left( \chi \left( G \utimes{P} N, \; p^*\mathscr{L}_{G/P}(\mu) \right); q \right) = \sum_{\lambda \in X^*(T)_-} m^\mu_{\lambda, \Psi}(q) [V(\lambda)] $$
  of formal sums.
\end{theorem}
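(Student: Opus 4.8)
The plan is to compute the Euler characteristic $\chi(G \utimes{B} N, p^*\mathscr{L}_{G/B}(\mu))$ via two independent routes and match them. The left-hand side is, by Definition \ref{def:P-series}, a formal sum $\sum_{\lambda} a_\lambda(q)[V(\lambda)]$ over anti-dominant $\lambda$, where $a_\lambda(q)$ is the Poincaré series of the $V(\lambda)$-isotypic part of the graded $G$-module $\chi(\cdots)$. So it suffices to show $a_\lambda(q) = m^\mu_{\lambda,\Psi}(q)$ for each $\lambda \in X^*(T)_-$.

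First I would unwind the left side. By Proposition \ref{prop:bundle-pushforward}, the $k$-th graded piece of $H^i(G \utimes{B} N, p^*\mathscr{L}_{G/B}(\mu))$ is $H^i(G/B, \mathscr{L}_{G/B}(\mu) \otimes \Sym^k(N^\vee))$. Filtering the $B$-module $\Sym^k(N^\vee)$ by weight spaces (each a quotient of a direct sum of $\Bbbk_{-\beta}$ with $\beta$ a sum of $k$ elements of $\Psi$, tracked by the $\Gm$-grading), additivity of $\chi$ on short exact sequences reduces the computation of $\chi(G/B, \mathscr{L}_{G/B}(\mu)\otimes\Sym^k N^\vee)$ to a sum of $\chi(G/B, \mathscr{L}_{G/B}(\mu - \beta))$ over all ways of writing $\beta$ as a sum of $k$ weights in $\Psi$; the number of such ways with a fixed $\beta$ is exactly the coefficient extracted by the generating identity \eqref{eqn:P-Psi}, i.e. $\mathcal{P}_\Psi(-\beta;\,\cdot)$ read off appropriately. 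Thus in the Grothendieck group, keeping track of the grading variable $q$,
\[
  \mathbf{P}\!\left(\chi(G \utimes{B} N, p^*\mathscr{L}_{G/B}(\mu)); q\right) \;=\; \sum_{\nu \in X^*(T)} \mathcal{P}_\Psi(\nu - \mu;\, q)\, \bigl[\chi(G/B, \mathscr{L}_{G/B}(\nu))\bigr],
\]
where I must be careful with signs and with the direction of the grading (the $N^\vee$ and the dilation $\Gm$-action conspire so that $q$ records fiber-degree); the anti-dominant conventions are precisely what makes the bookkeeping come out without contragredients, as the remark before the theorem promises.

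Next I would invoke Borel--Weil--Bott. In the anti-dominant normalization, $\chi(G/B, \mathscr{L}_{G/B}(\nu))$ is $\pm[V(\lambda)]$ when $\nu + \rho_{B^-}$ is regular and $W$-conjugate to $\lambda + \rho_{B^-}$ with $\lambda \in X^*(T)_-$, the sign being $(-1)^{\ell(w)}$ for the unique $w$ with $w(\lambda+\rho_{B^-}) = \nu + \rho_{B^-}$, and is $0$ when $\nu+\rho_{B^-}$ is singular. Substituting this into the displayed sum, collecting the coefficient of $[V(\lambda)]$ amounts to summing $(-1)^{\ell(w)}\mathcal{P}_\Psi(w(\lambda+\rho_{B^-}) - (\mu+\rho_{B^-});\,q)$ over $w \in W$ — which is exactly $m^\mu_{\lambda,\Psi}(q)$ of Definition \ref{def:gKF}. (The convergence of these possibly-infinite sums in $\Z\llbracket q\rrbracket$ and in the completed monoid ring is guaranteed because $\Psi$ lies in a strongly convex cone, so only finitely many decompositions contribute in each $q$-degree; this is where the strong-convexity hypothesis from \S\ref{sec:com-setup} is used.)

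The main obstacle, and the step needing the most care, is the first one: legitimately passing from $\Sym^k(N^\vee)$ as a $B$-module to a virtual sum of one-dimensional $B$-modules $\Bbbk_{-\beta}$ inside the Grothendieck group while correctly matching the $\Gm$-grading to the variable $q$, and then interchanging the (infinite) sum over $k$ (or over $\nu$) with the application of $\chi$ and with Bott's theorem. Since $\chi$ is additive on exact sequences and a $B$-module has a composition series with one-dimensional quotients, the first reduction is formal; the interchange is justified termwise in each fixed $q$-degree, where everything is a finite sum, again by strong convexity of $\Psi$. I would also double-check the edge case where $N$ is not multiplicity-free or $0 \in \Psi$ is excluded by the convex-cone hypothesis, and confirm that the case $\Psi = \Sigma_B$ (Example \ref{ex:q-analogue}) recovers the Kato formula, as a sanity check on signs and on the anti-dominant conventions.
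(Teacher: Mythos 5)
Your proposal is correct and follows essentially the same route as the paper: push forward along the bundle via Proposition \ref{prop:bundle-pushforward}, replace $\Sym^k(N^\vee)$ by its one-dimensional $B$-constituents in the Grothendieck group (your composition-series argument is the paper's semi-simplification step), apply Borel--Weil--Bott in the anti-dominant normalization, and collect the coefficient of $[V(\lambda)]$ to recover $m^\mu_{\lambda,\Psi}(q)$, with the strong convexity of $\Psi$ justifying the degreewise-finite rearrangements exactly as in the paper.
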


These sums may be understood $q$-adically: for each $k$ the coefficient of $q^k$ is a finite sum.

\begin{proof}
  Take $P=B$. For every $k \geq 0$ we have
  $$ \chi\left( G/B, \; \mathscr{L}_{G/B}(\Bbbk_\mu \otimes \Sym^k (N^\vee))\right) = \chi\left( G/B, \; \mathscr{L}_{G/B}(\Bbbk_\mu \otimes \Sym^k (N^\vee))^{\text{ss}} \right) $$
  in the Grothendieck group of finite-dimensional $G$-representations, where we have used an obvious variant of \eqref{eqn:EP}, and we denote by $(\cdots)^\text{ss}$ the semi-simplification of a finite-dimensional representation of $B$, namely the direct sum of its Jordan-Hölder factors. Therefore $\Sym^k (N^\vee)^{\text{ss}}$ is a sum of elements in $X^*(T)$ inflated to $B$.

  By virtue of the Proposition \ref{prop:bundle-pushforward}, the left-hand side in the assertion equals
  \begin{multline*}
    \sum_{k \geq 0}  \mathbf{P}\left( \chi\left( G/B, \; \mathscr{L}_{G/B}(\Bbbk_\mu \otimes \Sym^k (N^\vee))^{\text{ss}} \right)  \right) q^k = \\
    \sum_{k \geq 0} \sum_{\nu \in X^*(T)} \text{mult}(\Sym^k (N^\vee)|_T : \nu) \cdot \mathbf{P}\left( \chi\left( G/B, \mathscr{L}_{G/B}(\mu+\nu) \right) \right) q^k = \\
    \sum_{\nu \in X^*(T)} \mathcal{P}_\Psi(\nu; q) \cdot \mathbf{P}\left( \chi\left( G/B, \mathscr{L}_{G/B}(\mu+\nu) \right) \right), \quad \text{ by the definition \eqref{eqn:P-Psi} for } \mathcal{P}_\Psi(\cdot ; q).
  \end{multline*}

  Now we invoke the Borel-Weil-Bott theorem. It asserts that
  \begin{itemize}
    \item either $\mu + \nu + \rho_{B^-}$ does not intersect $W \cdot (X^*(T)_- + \rho_{B^-})$, in which case
      $$ H^i(G/B, \mathscr{L}_{G/B}(\mu+\nu)) = 0 \quad \text{ for all } i. $$
    \item or there exists $\lambda \in X^*(T)_-$ and $w \in W$ such that $\mu + \nu + \rho_{B^-} = w(\lambda + \rho_{B^-})$, in which case the pair $(w, \lambda)$ is unique and
      $$ H^i(G/B, \mathscr{L}_{G/B}(\mu+\nu)) \simeq \begin{cases}
	V(\lambda), & \text{ if } i = \ell(w), \\
	0, & \text{ otherwise},
      \end{cases}$$
    as $G$-representations.
  \end{itemize}

  Therefore the last sum can be rearranged according to $(w, \lambda) \in W \times X^*(T)_-$. The result is
  \begin{gather*}
    \sum_{\lambda \in X^*(T)_-} \sum_{w \in W} (-1)^{\ell(w)} \mathcal{P}_\Psi( w(\lambda + \rho_{B^-}) - (\mu + \rho_{B^-}); q) [V(\lambda)]  = \sum_{\lambda \in X^*(T)_-} m^\mu_{\lambda, \Psi}(q) [V(\lambda)]
  \end{gather*}
  as asserted.
\end{proof}

\begin{corollary}
  The assertion in Theorem \ref{prop:EP-gKF} holds for any parabolic subgroup $P$ containing $B$.
\end{corollary}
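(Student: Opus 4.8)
The strategy is to reduce the statement for a general parabolic $P\supseteq B$ to the already-proved case $P=B$ by pulling everything back along the smooth projective morphism $\kappa\colon G/B\to G/P$, whose fibres are isomorphic to $P/B$. Fix $\mu\in X^*(P)$ and regard it also as an element of $X^*(T)$ via the inclusion $X^*(P)\hookrightarrow X^*(T)$, so that $\mathscr{L}_{G/P}(\mu)$ is defined and $\kappa^*\mathscr{L}_{G/P}(\mu)\simeq\mathscr{L}_{G/B}(\mu)$; note also that the proof of Theorem \ref{prop:EP-gKF} for $P=B$ goes through without change for an arbitrary weight in $X^*(T)$ (the Borel--Weil--Bott step requires no dominance), so it applies to this $\mu$.

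First I would set up the pullback. Let $N|_B$ be $N$ with the $P$-action restricted to $B$: it is a $B$-stable subspace of the same finite-dimensional $G$-representation, its set of $T$-weights is still $\Psi$, and it satisfies the conditions of \S\ref{sec:com-setup} for the pair $(G,B)$. There is a canonical $G\times\Gm$-equivariant isomorphism of vector bundles over $G/B$,
\[
  \kappa^*\bigl(G\utimes{P}N\bigr)\;\simeq\;G\utimes{B}(N|_B),
\]
the right-hand side being equipped with its usual fibrewise dilation action of $\Gm$. Let $\tau\colon G\utimes{B}(N|_B)\to G\utimes{P}N$ be the induced morphism lying over $\kappa$, and write $p_B$, $p_P$ for the bundle projections onto $G/B$ and $G/P$, so that $p_P\circ\tau=\kappa\circ p_B$. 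The morphism $\tau$ is, Zariski-locally on its base, a product with the flag variety $P/B$ (Remark \ref{rem:locally-trivial}), hence proper; it is $\Gm$-equivariant; and one has
\[
  \tau^*p_P^*\mathscr{L}_{G/P}(\mu)\;=\;p_B^*\kappa^*\mathscr{L}_{G/P}(\mu)\;=\;p_B^*\mathscr{L}_{G/B}(\mu).
\]

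The crux is to compute $R\tau_*$ of this line bundle. Since $H^0(P/B,\mathcal{O})=\Bbbk$ and $H^i(P/B,\mathcal{O})=0$ for $i\geq 1$, the local triviality of $\tau$ gives $\tau_*\mathcal{O}=\mathcal{O}$ and $R^i\tau_*\mathcal{O}=0$ for $i\geq 1$; the projection formula applied to $p_B^*\mathscr{L}_{G/B}(\mu)=\tau^*p_P^*\mathscr{L}_{G/P}(\mu)$ then yields $R\tau_*\,p_B^*\mathscr{L}_{G/B}(\mu)\simeq p_P^*\mathscr{L}_{G/P}(\mu)$, concentrated in degree zero, with all identifications $G\times\Gm$-equivariant. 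Hence the Leray spectral sequence for $\tau$ collapses and produces $G\times\Gm$-equivariant isomorphisms
\[
  H^i\bigl(G\utimes{B}(N|_B),\,p_B^*\mathscr{L}_{G/B}(\mu)\bigr)\;\simeq\;H^i\bigl(G\utimes{P}N,\,p_P^*\mathscr{L}_{G/P}(\mu)\bigr),\qquad i\geq 0.
\]
Taking alternating sums gives an equality of the graded virtual $G$-representations $\chi(\cdots)$ introduced in \eqref{eqn:EP}, and therefore of their Poincaré series. Applying Theorem \ref{prop:EP-gKF} for the pair $(G,B)$ to the representation $N|_B$ (whose weight set is $\Psi$) now identifies the left-hand Poincaré series with $\sum_{\lambda\in X^*(T)_-}m^\mu_{\lambda,\Psi}(q)\,[V(\lambda)]$, which is exactly the asserted formula for $(G,P)$.

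The only delicate point is the equivariant bookkeeping in the third paragraph: one must verify that $\tau_*\mathcal{O}=\mathcal{O}$ and $R^{\geq 1}\tau_*\mathcal{O}=0$ hold compatibly with the $G$- and $\Gm$-linearizations, so that the resulting identification of cohomology groups respects both the $\Z_{\geq 0}$-grading and the decomposition into $G$-isotypic components, degree by degree; this is precisely what makes the two Poincaré series coincide. Everything else is routine manipulation with homogeneous fibre spaces, base change along $\kappa$, and the projection formula.
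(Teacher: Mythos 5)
Your proof is correct and is essentially the paper's own argument: the paper reduces to the case $P=B$ via the fibration $G/B \to G/P$ with fibres $P/B$ and the (degenerate) Leray spectral sequence, citing Panyushev for the details you have spelled out, namely $R\tau_*\mathcal{O}=\mathcal{O}$ from $H^0(P/B,\mathcal{O})=\Bbbk$ and $H^{i>0}(P/B,\mathcal{O})=0$, the projection formula, and the equivariant identification of the graded cohomology groups. The only difference is presentational: you make explicit the pullback isomorphism $\kappa^*(G \utimes{P} N)\simeq G\utimes{B}(N|_B)$ and the compatibility of the $\Gm$-gradings, which the paper leaves to the cited reference.
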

\begin{proof}
  Use the Leray spectral sequence for $G/P \to G/B$ to reduce to the case $P=B$; see \cite[Theorem 3.9]{Pan10}.
\end{proof}

\begin{corollary}\label{prop:gKF-positivity}
  If $\mu \in X^*(T)$ satisfies
  $$ H^i \left( G \utimes{P} N, \; p^* \mathscr{L}_{G/P}(\mu) \right) = 0 \quad \text{ for all } i \geq 1, $$
  then we have
  $$ \mathbf{P} \left( \Gamma \left( G \utimes{P} N, \; p^*\mathscr{L}_{G/P}(\mu) \right); q \right) = \sum_{\lambda \in X^*(T)_-} m^\mu_{\lambda, \Psi}(q) [V(\lambda)]. $$
  In particular, in this case $m^\mu_{\lambda, \Psi}(q) \in \Z_{\geq 0}[q]$ for all $\lambda \in X^*(T)_-$.
\end{corollary}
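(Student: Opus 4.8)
The plan is to deduce this directly from Theorem \ref{prop:EP-gKF} (and its corollary for general parabolic $P$), the point being that the vanishing hypothesis makes the Euler characteristic \eqref{eqn:EP} collapse onto its degree-zero term.

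First I would recall that, by Proposition \ref{prop:bundle-pushforward}, the $\Z_{\geq 0}$-grading by fiber degree identifies the $k$-th graded piece of $H^i(G \utimes{P} N, \; p^*\mathscr{L}_{G/P}(\mu))$ with $H^i(G/P, \mathscr{L}_{G/P}(\mu) \otimes \Sym^k(N^\vee))$, a finite-dimensional representation of $G$. The hypothesis $H^i(G \utimes{P} N, \; p^*\mathscr{L}_{G/P}(\mu)) = 0$ for all $i \geq 1$ therefore forces each of these graded pieces to vanish for $i \geq 1$. Consequently the alternating sum \eqref{eqn:EP} reduces, grade by grade, to its $i=0$ term:
$$ \chi\left(G \utimes{P} N, \; p^*\mathscr{L}_{G/P}(\mu)\right) = \Gamma\left(G \utimes{P} N, \; p^*\mathscr{L}_{G/P}(\mu)\right) $$
as $\Z_{\geq 0}$-graded representations of $G$ with finite-dimensional graded pieces. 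Hence their Poincaré series in the sense of Definition \ref{def:P-series} coincide term by term. Applying Theorem \ref{prop:EP-gKF} (for $P=B$) or its corollary (for general $P$), which computes $\mathbf{P}\left(\chi(G \utimes{P} N, \; p^*\mathscr{L}_{G/P}(\mu)); q\right) = \sum_{\lambda \in X^*(T)_-} m^\mu_{\lambda, \Psi}(q) [V(\lambda)]$, yields the claimed identity for $\mathbf{P}\left(\Gamma(\cdots); q\right)$. (Theorem \ref{prop:vanishing} of course supplies an explicit class of $\mu$ for which the hypothesis holds, though the corollary is phrased for any such $\mu$.)

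The positivity is then formal: $\Gamma(G \utimes{P} N, \; p^*\mathscr{L}_{G/P}(\mu))$ is a genuine — not merely virtual — representation of $G$, graded by $\Z_{\geq 0}$ with each graded piece a finite-dimensional $G$-module. So the coefficient of $q^k$ in its Poincaré series, when expanded in the basis $\{[V(\lambda)]\}_{\lambda \in X^*(T)_-}$, is exactly the tuple of multiplicities of the $V(\lambda)$ in that $k$-th graded piece, hence lies in $\Z_{\geq 0}$. Reading off the coefficient of $[V(\lambda)]$ and summing over $k$ shows $m^\mu_{\lambda, \Psi}(q) \in \Z_{\geq 0}[q]$, a genuine polynomial since $m^\mu_{\lambda, \Psi}(q) \in \Z[q]$ by Definition \ref{def:gKF}. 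I do not anticipate a real obstacle here; the one point deserving a line of care is the compatibility of the collapse $\chi = H^0$ with the grading, which is precisely what Proposition \ref{prop:bundle-pushforward} guarantees.
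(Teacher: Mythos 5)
Your argument is correct and is exactly the reasoning the paper intends: the vanishing hypothesis collapses the Euler characteristic \eqref{eqn:EP} onto $H^0$ grade by grade (via Proposition \ref{prop:bundle-pushforward}), so Theorem \ref{prop:EP-gKF} and its corollary give the Poincaré series of $\Gamma(\cdots)$, and positivity follows because the coefficients are multiplicities in a genuine graded $G$-representation. The paper states the corollary without proof precisely because this is the intended immediate deduction.
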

In this article we will only use the case $P=B$.

Now comes the case of Lusztig's $q$-analogues $K_{\lambda, \mu}(q)$ of the Example \ref{ex:q-analogue}. We concentrate on the case of anti-dominant $\mu$. The following result appeared first in \cite[\S 5]{Bry89}.

\begin{corollary}\label{prop:K-as-P-series}
  Let $\lambda, \mu \in X^*(T)_-$. Then $K_{\lambda, \mu}(q)$ equals the following $\Z_{\geq 0}$-graded Poincaré series of the $\Bbbk$-vector space
  $$ \Hom_G \left( V(\lambda), \Gamma\left( G \utimes{B} \mathfrak{u}, \; p^* \mathscr{L}_{G/B}(\mu) \right) \right); $$
  or equivalently, of the space of $G$-invariants
  $$ \left( V(\lambda)^\vee \otimes  \Gamma\left( G \utimes{B} \mathfrak{u}, \; p^* \mathscr{L}_{G/B}(\mu) \right) \right)^G . $$
  In both cases the $\Z_{\geq 0}$-grading comes from the component $\Gamma(G \utimes{B} \mathfrak{u}, \cdots)$.
\end{corollary}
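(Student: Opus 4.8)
The plan is to read this off from Corollary~\ref{prop:gKF-positivity} applied to the data of Example~\ref{ex:q-analogue}: take $P=B$ and $N=\mathfrak u$ with the adjoint $B$-action, so that $\Psi=\Sigma_B$ and, as recorded there, $m^\mu_{\lambda,\Psi}(q)=K_{\lambda,\mu}(q)$ in the anti-dominant normalization used throughout this section. The homogeneous fibration $p\colon G\utimes{B}\mathfrak u\to G/B$ is then, as noted in \S\ref{sec:intro}, the cotangent bundle $T^*(G/B)$, and the collapsing morphism $\pi\colon G\utimes{B}\mathfrak u\to G\cdot\mathfrak u=\mathcal N$ is the Springer resolution of the nilpotent cone; since every regular nilpotent element lies in a unique Borel subalgebra, $\pi$ is birational, in particular generically finite.

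Next I would verify the hypothesis of Corollary~\ref{prop:gKF-positivity}, namely $H^i(G\utimes{B}\mathfrak u,\;p^*\mathscr L_{G/B}(\mu))=0$ for all $i\ge 1$. This is exactly Theorem~\ref{prop:vanishing}: its generic-finiteness assumption was just checked, and its combinatorial condition \eqref{eqn:mu-vanishing-condition} reads $\mu+\sum_{\alpha\in\Psi}\alpha-\sum_{\alpha\in\Sigma_P}\alpha\in X^*(T)_-$; here $\Psi=\Sigma_B$ and $\mathfrak u_P=\mathfrak u$, so $\Sigma_P=\Sigma_B$ and the two sums cancel, leaving precisely $\mu\in X^*(T)_-$, which is our hypothesis. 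Thus Corollary~\ref{prop:gKF-positivity} applies and gives
\[
  \mathbf P\!\left(\Gamma\!\left(G\utimes{B}\mathfrak u,\;p^*\mathscr L_{G/B}(\mu)\right);q\right)=\sum_{\lambda\in X^*(T)_-}K_{\lambda,\mu}(q)\,[V(\lambda)]
\]
as an equality of formal sums.

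It then remains to extract the coefficient of $[V(\lambda)]$ in terms of the stated $\Hom$-space. Write $M:=\Gamma(G\utimes{B}\mathfrak u,\;p^*\mathscr L_{G/B}(\mu))$; by Proposition~\ref{prop:bundle-pushforward} this is a locally finite $G$-module carrying a compatible $\Z_{\ge 0}$-grading with finite-dimensional graded pieces $M_k=H^0(G/B,\mathscr L_{G/B}(\mu)\otimes\Sym^k\mathscr L_{G/B}(\mathfrak u^\vee))$. Decomposing each $M_k$ into isotypic components, $M$ splits $G$-equivariantly and grading-compatibly as $\bigoplus_{\lambda}V(\lambda)\otimes\Hom_G(V(\lambda),M)$ with $V(\lambda)$ placed in degree $0$; hence $\mathbf P(M[V(\lambda)];q)=\mathbf P(\Hom_G(V(\lambda),M);q)\,[V(\lambda)]$, and comparing coefficients in the displayed identity yields $\mathbf P(\Hom_G(V(\lambda),M);q)=K_{\lambda,\mu}(q)$. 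Finally, the standard adjunction $\Hom_G(V(\lambda),M)\cong(V(\lambda)^\vee\otimes M)^G$, applied degree by degree on the finite-dimensional pieces $M_k$, is an isomorphism of $\Z_{\ge 0}$-graded vector spaces, which gives the second description.

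I do not expect a genuine obstacle here: the mathematical content lies entirely in Theorem~\ref{prop:vanishing} and Theorem~\ref{prop:EP-gKF}, already in hand. The only points demanding a little care are bookkeeping — confirming the identification $G\utimes{B}\mathfrak u\simeq T^*(G/B)$ and the birationality of the Springer map, and making sure all $\Hom_G$-spaces, invariants and isotypic decompositions are formed in the category of locally finite $G$-modules, so that the usual finite-dimensional identities apply verbatim and respect the grading.
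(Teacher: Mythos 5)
Your proposal is correct and follows essentially the same route as the paper: specialize Example \ref{ex:q-analogue} ($P=B$, $N=\mathfrak{u}$, $\Psi=\Sigma_B$), note the collapsing is the birational Springer resolution and that condition \eqref{eqn:mu-vanishing-condition} reduces to $\mu\in X^*(T)_-$, then apply Theorem \ref{prop:vanishing} and Corollary \ref{prop:gKF-positivity}. The paper leaves the final coefficient-extraction and the $\Hom_G$/invariants identification implicit in the definition of the Poincaré series, so your extra bookkeeping is harmless elaboration rather than a divergence.
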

To reconcile with the terminologies in \cite{Bry89}, we remark that $G \utimes{B} \mathfrak{u}$ is nothing but the cotangent bundle of $G/B$.

\begin{proof}
  It suffices to take up the Example \ref{ex:q-analogue} in which $N=\mathfrak{u}$, $P=B$ and $\Psi = \Sigma_B$. Note that
  \begin{itemize}
    \item the collapsing $\pi: G \utimes{B} \mathfrak{u} \to G \cdot \mathfrak{u}$ becomes the Springer resolution of the nilpotent cone in $\mathfrak{g}$, in particular $\pi$ is birational;
    \item the condition on $\mu$ in Theorem \ref{prop:vanishing} reduces to $\mu \in X^*(T)_-$.
  \end{itemize}
  Thus the higher cohomologies in question all vanish and we may apply Corollary \ref{prop:gKF-positivity}.
\end{proof}

\section{Basic functions as Poincaré series}\label{sec:gKF-basic}
\subsection{Geometry of the variety $\mathcal{Y}$}\label{sec:variety-Y}
Let $\Bbbk$, $G$, $B$ and $T$ as be in \S\ref{sec:gKF}. Consider a submonoid $X^*(T)_\ominus$ of $X^*(T)$ such that
$$ X^*(T)_\ominus \subset X^*(T)_- $$
and of the form
$$ X^*(T)_\ominus = \mathcal{C}^* \cap X^*(T) $$
where $\mathcal{C}^*$ is some rational cone in $X^*(T)_\R$ that is strongly convex (i.e. defined over $\Q$ and containing no lines).

To such a $X^*(T)_\ominus$ there is an associated a normal affine toric variety under $T$, denoted by $\Pi$. More precisely, we define the following representation of $T$
$$ R := \bigoplus_{\mu \in X^*(T)_\ominus} \Bbbk_\mu. $$
It is actually a $\Bbbk$-algebra with $T$-action: the multiplication comes from the equality $\Bbbk_{\mu_1} \otimes \Bbbk_{\mu_2} = \Bbbk_{\mu_1 + \mu_2}$ for representations of $T$. One can show that $R$ is a domain finitely generated over $\Bbbk$ (see also the proof of Lemma \ref{prop:Pi-embedding} below). Set
$$ \Pi :=  \Spec(R). $$

We inflate $R$ (resp. $\Pi$) to a $\Bbbk$-algebra (resp. $\Bbbk$-variety) with $B$-action.

\begin{lemma}\label{prop:Pi-embedding}
  There exists an affine $G$-variety $\Xi = \Spec(S)$ together with a $B$-equivariant closed immersion $\Pi \hookrightarrow \Xi$.
\end{lemma}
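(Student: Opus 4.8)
The plan is to exhibit $\Pi$ as a closed $B$-subvariety of a finite direct sum of irreducible $G$-modules, by dualizing an explicit $B$-equivariant surjection of coordinate algebras. Along the way this also yields the finite generation of $R$ asserted just before the statement: the monoid $X^*(T)_\ominus = \mathcal{C}^* \cap X^*(T)$ is finitely generated by Gordan's lemma, so after fixing monoid generators $\mu_1, \dots, \mu_n \in X^*(T)_\ominus$ the algebra $R$ becomes a quotient of a polynomial $\Bbbk$-algebra in $n$ variables, hence finitely generated; and $R$ is a domain since $R \subset \Bbbk[X^*(T)]$, a Laurent polynomial ring.

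The crucial observation is that, although $\Pi$ carries only anti-dominant $T$-weights and therefore cannot sit as a $B$-submodule of any $G$-module, each $\Bbbk_{\mu_i}$ does occur as a $B$-equivariant \emph{quotient} of an irreducible $G$-module. Since $\mu_i$ is anti-dominant relative to $B$, the weight $\lambda_i := w_0 \mu_i$ is dominant; let $V(\lambda_i)$ be the irreducible $G$-module of highest weight $\lambda_i$ relative to $B$. Its lowest weight space is the one-dimensional line of weight $w_0 \lambda_i = \mu_i$, while the sum of all the remaining weight spaces is a $B$-submodule of codimension one, because adding positive roots can never reach the minimal weight. Hence the space of $U$-coinvariants of $V(\lambda_i)$ is one-dimensional and, as a $B$-module inflated from $T$, equals $\Bbbk_{\mu_i}$; this produces a canonical $B$-equivariant surjection $\phi_i: V(\lambda_i) \twoheadrightarrow \Bbbk_{\mu_i}$.

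Now set $S := \bigotimes_{i=1}^n \Sym V(\lambda_i)$, a finitely generated $G$-algebra, and $\Xi := \Spec S$, which is an affine $G$-variety (namely the $G$-module $\bigoplus_{i=1}^n V(\lambda_i)^\vee$). Applying $\Sym$ to each $\phi_i$ and tensoring over $\Bbbk$ gives a surjective $B$-equivariant algebra homomorphism $S \twoheadrightarrow \bigotimes_{i=1}^n \Sym \Bbbk_{\mu_i}$, whose target is the monoid algebra $\Bbbk[\Z_{\geq 0}^n]$ graded over $X^*(T)$ by $e_i \mapsto \mu_i$, with $U$ acting trivially. Composing with the $T$-equivariant surjection $\Bbbk[\Z_{\geq 0}^n] \twoheadrightarrow \Bbbk[X^*(T)_\ominus] = R$ induced by the monoid surjection $\Z_{\geq 0}^n \to X^*(T)_\ominus$, $e_i \mapsto \mu_i$, I obtain a surjective $B$-equivariant homomorphism of $\Bbbk$-algebras $S \twoheadrightarrow R$. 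Taking $\Spec$ turns this into the desired $B$-equivariant closed immersion $\Pi = \Spec R \hookrightarrow \Xi$.

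The verifications that surjectivity is preserved by $\Sym$ and by tensoring over $\Bbbk$, and that every arrow in sight is $B$-equivariant with $U$ acting as required, are routine. The only point that needs care — and the main (small) obstacle — is the sub-versus-quotient dichotomy in the second step: one must pass to lowest-weight quotients (equivalently, $U$-coinvariants) of the $V(\lambda_i)$ rather than searching for $B$-stable lines, since no $G$-module contains a $B$-stable line of a non-central anti-dominant weight.
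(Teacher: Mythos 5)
Your proof is correct, but it follows a genuinely different route from the paper's. The paper takes $S := \bigoplus_{\mu \in X^*(T)_\ominus} V(-\mu)^\vee$, realized inside $\Bbbk\bigl[\overline{G/U}^{\mathrm{aff}}\bigr]$ so that the multiplication is the Cartan product $V(\lambda_1)^\vee \cdot V(\lambda_2)^\vee = V(\lambda_1+\lambda_2)^\vee$ (finite generation again via Gordan's lemma), and produces $S \twoheadrightarrow R$ by quotienting out the $B$-stable ideal $\mathfrak{a}$ spanned by all weight spaces of weight $>_B \mu$, the identification of $S/\mathfrak{a}$ with $R$ being made by evaluation at the base point of $G/U$. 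You instead choose monoid generators $\mu_1,\dots,\mu_n$ of $X^*(T)_\ominus$ and take for $\Xi$ the finite-dimensional $G$-module $\bigoplus_{i=1}^n V(\lambda_i)^\vee$ with $\lambda_i = w_0\mu_i$, getting the surjection $S \twoheadrightarrow R$ from the lowest-weight ($U$-coinvariant) quotients $V(\lambda_i) \twoheadrightarrow \Bbbk_{\mu_i}$ followed by the monoid-algebra surjection $\Bbbk[\Z_{\geq 0}^n] \twoheadrightarrow R$; every step ( $B$-stability of the codimension-one complement of the lowest weight line, preservation of surjectivity under $\Sym$ and tensor product, equivariance) is sound, and since the lemma only asserts existence, the non-canonical choice of generators is harmless. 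What your construction buys: it avoids the affine closure $\overline{G/U}^{\mathrm{aff}}$ and the Cartan-product algebra structure altogether, and $\Xi$ is simply an affine space; it is in effect the extension, to a not-necessarily-free monoid $X^*(T)_\ominus$, of the construction the paper records in the remark right after the lemma (Broer's embedding $\Pi \hookrightarrow \prod_i V(-\varpi_i)$). What the paper's construction buys is canonicity: no choice of generators or presentation, with $S$ indexed by all of $X^*(T)_\ominus$ at once. One clarification on your closing remark: the sub-versus-quotient point is correct on the algebra side, but dually, on the variety side, your map is exactly the inclusion of the $B$-stable highest weight line $\Bbbk_{-\mu_i} \hookrightarrow V(\lambda_i)^\vee$, of dominant weight $-\mu_i$; so the ``sub'' picture is available after dualizing (and is the one used in the paper's remark) --- the anti-dominant weights live on the coordinate ring $R$, not on the variety $\Pi$ itself.
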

\begin{proof}
  For any $\lambda \in X^*(T)$, let $V(\lambda)$ be the irreducible representation of $G$ of extremal weight $\lambda$. Consider the diagram
  \begin{gather}\label{eqn:R-S-diag}
  \xymatrix{
    S := \bigoplus_{\mu \in X^*(T)_\ominus} V(-\mu)^\vee \ar@{^{(}->}[r] \ar@{-->>}[d] & \bigoplus_{\lambda \in X^*(T)_+} V(\lambda)^\vee = \Bbbk\left[ \overline{G/U}^\text{aff} \right] \\
    R = \bigoplus_{\mu \in X^*(T)_\ominus} \Bbbk_\mu &
  }\end{gather}
  where $\overline{G/U}^\text{aff}$ denotes the affine closure of the quasi-affine variety $G/U$; see \cite[p.10]{Ti11} for detailed discussions. Here the representations $V(\lambda)^\vee$ are viewed as spaces of regular functions on $\overline{G/U}^\text{aff}$, so it makes sense to talk about multiplication and there is an equality (see \cite[Lemma 2.23]{Ti11}):
  $$ V(\lambda_1)^\vee \cdot V(\lambda_2)^\vee = V(\lambda_1 + \lambda_2)^\vee, \quad \lambda_1, \lambda_2 \in X^*(T)_+ . $$
  This makes $S$ into a $\Bbbk$-algebra. Moreover, $S$ is finitely generated: this follows from the equality above and the fact that $X^*(T)_\ominus$ is finitely generated as a commutative monoid (Gordan's lemma).

  The first row of \eqref{eqn:R-S-diag} respects the $G$-actions. To define the vertical arrow, set
  $$ \mathfrak{a} := \bigoplus_{\mu \in X^*(T)_\ominus} \bigoplus_{\nu >_B \mu} (V(-\mu)^\vee)_\nu $$
  where $\nu >_B \mu$ means the strict inequality in the Bruhat order relative to $B$. One easily checks that $\mathfrak{a}$ is a $B$-stable ideal, and
  $$ S/\mathfrak{a} = \bigoplus_{\mu \in X^*(T)_\ominus} (V(-\mu)^\vee)_\mu = \bigoplus_{\mu \in X^*(T)_\ominus} (V(-\mu)^\vee)^{U^-}, \quad T-\text{equivariantly}. $$

  Also observe that $U$ acts trivially on $S/\mathfrak{a}$ since the $\mathfrak{u}$-action raises weights. It remains to identify $S/\mathfrak{a}$ with $R$. Take $o \in G/U$ corresponding to $1 \cdot U$. For every $\mu$ take the unique $f_\mu \in (V(-\mu)^\vee)_\mu$ satisfying $f_\mu(o) = 1$. Sending $f_\mu$ to $1 \in \Bbbk_\mu$ yields the required $B$-equivariant surjection $S \twoheadrightarrow S/\mathfrak{a} \rightiso R$ as $\Bbbk$-algebras.
\end{proof}

\begin{remark}
  When $X^*(T)_\ominus = \bigoplus_{i=1}^r \Z_{\geq 0} \varpi_i$ for some $\varpi_1, \ldots, \varpi_r$ with $r := \dim T$, there is a more straightforward recipe. Indeed, we have $R = \Sym( \bigoplus_{i=1}^r \Bbbk \varpi_i)$ and
  $$ \Pi = \prod_{i=1}^r \Bbbk_{-\varpi_i} $$
  where we regard each $\Bbbk_{-\varpi_i}$ as a one-dimensional affine $T$-variety. After inflation to $B$ we may regard $\Bbbk_{-\varpi_i}$ as the highest weight subspace of $V(-\varpi_i)$ relative to $B$. Hence $\Pi \hookrightarrow \Xi := \prod_{i=1}^r V(-\varpi_i)$ as $B$-varieties. This is the construction adopted in \cite[2.13]{Bro93}.
\end{remark}

In what follows, we fix a finite-dimensional representation $N$ of $B$, together with an affine $G$-variety $W$ with a closed immersion $N \hookrightarrow W$ of $B$-varieties.
\begin{definition}
  The $B$-equivariant embeddings $\Pi \hookrightarrow \Xi$ (Lemma \ref{prop:Pi-embedding}) and $N \hookrightarrow W$ define the collapsing for homogeneous fiber space
  $$\xymatrix{
    \tilde{\mathcal{Y}} := G \utimes{B} (\Pi \times N) \ar@{^{(}->}[r] \ar[d]_{\pi_0} & G \utimes{B} (\Xi \times W) \simeq G/B \times (\Xi \times W) \ar[d]^{\text{pr}_2} \\
    \mathcal{Y}_0 := G \cdot (\Pi \times N) \ar@{{(}->}[r] & \Xi \times W
  }$$
  All arrows are $G$-equivariant and the rows are closed immersions. Set
  $$ \mathcal{Y} := \text{the normalization of } \mathcal{Y}_0 $$
  so that $\pi_0$ factors through a morphism $\pi: \tilde{\mathcal{Y}} \to \mathcal{Y}$, which is still $G$-equivariant. The space $\tilde{\mathcal{Y}}$ is endowed with a $G \times T \times \Gm$-action, where
  \begin{itemize}
    \item $\Gm$ acts on the component $N$ as dilation by $z^{-1}$, for all $z \in \Gm$;
    \item $G$ acts by left translation as usual and $T$ acts through the toric variety $\Pi$, it is probably better to see this by writing
      $$ \tilde{\mathcal{Y}} = (T \times G) \utimes{T \times B} (\Pi \times N), $$
      the $(T \times B)$-action on $\Pi \times N$ being
      $$ (t, b) \cdot (x,u) = (t \bar{b} x ,bu), \quad (x, u) \in \Pi \times \mathfrak{u}, $$
      where $\bar{b}$ denotes the image of $b \in B$ in $T=B/U$.
  \end{itemize}

  Endow $\mathcal{Y}_0$ with the $G \times T \times \Gm$-action so that $\pi_0$ is equivariant. This induces a natural $G \times T \times \Gm$-action on $\mathcal{Y}$ making $\pi: \tilde{\mathcal{Y}} \to \mathcal{Y}$ equivariant.
\end{definition}

\begin{hypothesis}\label{hyp:N}
  Henceforth it is assumed that
  \begin{enumerate}
    \item the collapsing morphisms $\pi_0: \tilde{\mathcal{Y}} \to \mathcal{Y}_0$ and $\eta: G \utimes{B} N \to G \cdot N$ are both birational;
    \item the condition \eqref{eqn:mu-vanishing-condition} in Theorem \ref{prop:vanishing} (for $N$ and $P=B$) holds for every $\mu \in X^*(T)_\ominus$.
  \end{enumerate}
\end{hypothesis}

\begin{remark}\label{rem:birationality}
  The first condition holds when $N=\mathfrak{u}$ under the adjoint action of $B$, or more generally when $N = N_0 \oplus \mathfrak{u}$ for some $N_0$. Indeed, let $\mathcal{N} \subset \mathfrak{g}$ be the nilpotent cone and $\mathcal{N}_\text{reg}$ be the Zariski open subset of regular nilpotent elements, that is, the elements which belong to a unique Borel subalgebra of $\mathfrak{g}$. Set $\mathfrak{u}_\text{reg} := \mathfrak{u} \cap \mathcal{N}_\text{reg}$. The collapsing restricts to an isomorphism
  $$ G \utimes{B} \mathfrak{u}_\text{reg} \rightiso G \cdot \mathfrak{u}_\text{reg} = \mathcal{N}_\text{reg}. $$

  From this we deduce a cartesian diagram
  $$\xymatrix{
    G \utimes{B} (N_0 \times \mathfrak{u}_\text{reg}) \ar[r]^{\sim} \ar[d] & G \cdot (N_0 \times \mathfrak{u}_\text{reg}) \ar[d] \\
    G \utimes{B} N \ar[r]_{\eta}& G \cdot N \\
  }$$
  in which the vertical arrows are open immersions. The case for $\pi_0$ is similar.
\end{remark}

The following arguments are essentially paraphrases of \cite[2.13]{Bro93}

\begin{lemma}\label{prop:resolution-Y}
  The morphisms $\pi_0$ and $\pi$ are proper and birational. Moreover, $R\pi_* \mathcal{O}_{\tilde{\mathcal{Y}}} = \mathcal{O}_{\mathcal{Y}}$ and $R(\pi_0)_* \mathcal{O}_{\tilde{\mathcal{Y}}}$ is concentrated in degree zero, in the derived categories $D^b(\mathcal{Y})$ and $D^b(\mathcal{Y}_0)$, respectively.
\end{lemma}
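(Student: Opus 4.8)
The plan is to read off the topological properties ($\pi_0$ and $\pi$ proper and birational) from the collapsing formalism together with the universal property of normalization, and then to reduce the two cohomological assertions to Panyushev's vanishing Theorem~\ref{prop:vanishing}.

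\emph{Properness and birationality.} Since $\Xi$ and $W$ are $G$-varieties, the projection formula \eqref{eqn:proj-formula} identifies $G \utimes{B}(\Xi \times W)$ with $G/B \times (\Xi \times W)$, and the closed immersion $\Pi \times N \hookrightarrow \Xi \times W$ of $B$-varieties induces a closed immersion $\tilde{\mathcal Y} \hookrightarrow G/B \times (\Xi \times W)$. Its set-theoretic image under the second projection is $G\cdot(\Pi \times N) = \mathcal{Y}_0$, which is closed in $\Xi \times W$ by Lemma~\ref{prop:collapsing}; as $\tilde{\mathcal Y}$ is reduced, $\pi_0$ factors through a proper surjection $\tilde{\mathcal Y}\to\mathcal{Y}_0$, birational by Hypothesis~\ref{hyp:N}(1). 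Now $\tilde{\mathcal Y}$ is a variety which, by Zariski-local triviality of $G\to G/B$ (Remark~\ref{rem:locally-trivial}), is Zariski-locally isomorphic to (an open subset of $G/B$) $\times\,\Pi\times N$; since $\Pi$ is normal and $N$ is a vector space, $\tilde{\mathcal Y}$ is normal. The universal property of normalization then lets $\pi_0$ factor through $\nu\colon\mathcal{Y}\to\mathcal{Y}_0$, defining the $G$-morphism $\pi\colon\tilde{\mathcal Y}\to\mathcal{Y}$. As $\nu$ is finite (hence separated) and $\pi_0=\nu\circ\pi$ is proper, $\pi$ is proper; and $\pi$ is birational since $\nu$ and $\pi_0$ are.

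\emph{Reduction to the cohomology of $\tilde{\mathcal Y}$.} Both $\mathcal{Y}_0$ (closed in the affine $\Xi\times W$) and $\mathcal{Y}$ (normalization of an affine variety) are affine. Since $\pi_0$ and $\pi$ are proper, the sheaves $R^i(\pi_0)_*\mathcal{O}_{\tilde{\mathcal Y}}$ and $R^i\pi_*\mathcal{O}_{\tilde{\mathcal Y}}$ are coherent, so the Leray spectral sequences over the affine bases degenerate and yield $\Gamma(\mathcal{Y}_0,R^i(\pi_0)_*\mathcal{O}_{\tilde{\mathcal Y}})\simeq H^i(\tilde{\mathcal Y},\mathcal{O}_{\tilde{\mathcal Y}})\simeq\Gamma(\mathcal{Y},R^i\pi_*\mathcal{O}_{\tilde{\mathcal Y}})$; as coherent sheaves on affine schemes are recovered from their global sections, it suffices to prove $H^i(\tilde{\mathcal Y},\mathcal{O}_{\tilde{\mathcal Y}})=0$ for all $i\geq 1$. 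Granting this, both $R(\pi_0)_*\mathcal{O}_{\tilde{\mathcal Y}}$ and $R\pi_*\mathcal{O}_{\tilde{\mathcal Y}}$ are concentrated in degree $0$, and $\pi_*\mathcal{O}_{\tilde{\mathcal Y}}$ is a coherent sheaf of $\mathcal{O}_{\mathcal Y}$-algebras, integral over $\mathcal{O}_{\mathcal Y}$ and with the same function field, hence equal to $\mathcal{O}_{\mathcal Y}$ since $\mathcal{Y}$ is normal (Zariski's main theorem).

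\emph{The vanishing.} Consider the morphism $q\colon\tilde{\mathcal Y}=G\utimes{B}(\Pi\times N)\to G\utimes{B}N$, $[g,(x,n)]\mapsto[g,n]$. Writing $p'\colon G\utimes{B}N\to G/B$ for the bundle projection, $q$ is the base change along $p'$ of the projection $G\utimes{B}\Pi\to G/B$, which is affine since $\Pi$ is affine; hence $q$ is affine, and from $R=\bigoplus_{\mu\in X^*(T)_\ominus}\Bbbk_\mu$ we get $q_*\mathcal{O}_{\tilde{\mathcal Y}}=(p')^*\mathscr{L}_{G/B}(R)=\bigoplus_{\mu\in X^*(T)_\ominus}(p')^*\mathscr{L}_{G/B}(\mu)$. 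Since $q$ is affine and the schemes involved are noetherian,
$$ H^i(\tilde{\mathcal Y},\mathcal{O}_{\tilde{\mathcal Y}})\;\simeq\;\bigoplus_{\mu\in X^*(T)_\ominus}H^i\left(G\utimes{B}N,\;(p')^*\mathscr{L}_{G/B}(\mu)\right),\qquad i\geq 0. $$
By Hypothesis~\ref{hyp:N}(1) the collapsing $\eta\colon G\utimes{B}N\to G\cdot N$ is birational, in particular generically finite, and by Hypothesis~\ref{hyp:N}(2) the condition \eqref{eqn:mu-vanishing-condition} holds for every $\mu\in X^*(T)_\ominus$; Theorem~\ref{prop:vanishing} therefore annihilates each summand for $i\geq 1$, which completes the proof. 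The only genuine input beyond routine bundle bookkeeping is this last step, so I expect the identification of $H^\bullet(\tilde{\mathcal Y},\mathcal{O}_{\tilde{\mathcal Y}})$ with cohomology groups controlled by Theorem~\ref{prop:vanishing} — the place where Hypothesis~\ref{hyp:N} is actually used — to be the crux; the properness and normalization arguments are standard.
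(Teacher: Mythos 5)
Your proposal is correct and follows essentially the same route as the paper: properness and birationality via the collapsing Lemma \ref{prop:collapsing} and the normalization morphism, reduction of the cohomological statements over the affine bases to the vanishing of $H^{i}(\tilde{\mathcal{Y}},\mathcal{O}_{\tilde{\mathcal{Y}}})$ for $i\geq 1$, identification of these groups with $\bigoplus_{\mu\in X^*(T)_\ominus}H^i\bigl(G \utimes{B} N,\;r^*\mathscr{L}_{G/B}(\mu)\bigr)$, and then Theorem \ref{prop:vanishing} together with Hypothesis \ref{hyp:N}, concluding $\pi_*\mathcal{O}_{\tilde{\mathcal{Y}}}=\mathcal{O}_{\mathcal{Y}}$ by normality. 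The only cosmetic difference is that you push forward through $G\utimes{B}N$ while the paper pushes forward directly to $G/B$; the resulting computation is identical.
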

\begin{proof}
  By hypothesis $\pi_0$ is birational; it is proper surjective by Lemma \ref{prop:collapsing}. Since the normalization morphism is birational and proper, the same holds for $\pi: \tilde{\mathcal{Y}} \to \mathcal{Y}$.

  Consider the commutative diagram
  $$\xymatrix{
    \tilde{\mathcal{Y}} \ar[r]^{\pi} \ar[d]_p & \mathcal{Y} \ar[d] \\
    G/B \ar[r] & \Spec\;\Bbbk
  }$$
  in which the vertical morphisms are affine. We get
  \begin{gather}\label{eqn:Y-sections-0}
    H^i(\mathcal{Y}, R\pi_* \mathcal{O}_{\tilde{\mathcal{Y}}}) = H^i(G/B, Rp_* \mathcal{O}_{\tilde{\mathcal{Y}}}) = H^i(G/B, p_* \mathcal{O}_{\tilde{\mathcal{Y}}})
  \end{gather}
  for all $i \geq 0$. Upon recalling the description of $\Pi = \Spec R$, the last term can be rewritten via Proposition \ref{prop:bundle-pushforward} as
  \begin{equation}\label{eqn:Y-sections-1}\begin{aligned}
    H^i(G/B, \; p_* p^* \mathcal{O}_{G/B}) & = \bigoplus_{\mu \in X^*(T)_\ominus} H^i(G/B, \; \mathscr{L}_{G/B}(\Sym(N^\vee) \otimes \Bbbk_\mu )) \\
    & = \bigoplus_{\mu \in X^*(T)_\ominus} H^i(G \utimes{B} N, \; r^* \mathscr{L}_{G/B}(\mu))
  \end{aligned}\end{equation}
  where $r$ is the bundle map $G \utimes{B} N \to G/B$. Now we may invoke the Theorem \ref{prop:vanishing} and Hypothesis \ref{hyp:N} to deduce $H^i(\mathcal{Y}, R\pi_* \mathcal{O}_{\tilde{\mathcal{Y}}})=0$ whenever $i \geq 1$. As $\mathcal{Y}$ is affine, this entails that $R\pi_* \mathcal{O}_{\tilde{\mathcal{Y}}}$ is concentrated in degree zero. On the other hand, since $\pi$ is proper and birational, the normality of $\mathcal{Y}$ implies $\pi_* \mathcal{O}_{\tilde{\mathcal{Y}}} = \mathcal{O}_{\mathcal{Y}}$.

  The arguments for $\pi_0$ are similar.
\end{proof}

\begin{proposition}\label{prop:kY}
  There are canonical $G \times T \times \Gm$-equivariant isomorphisms
  \begin{align*}
    \Bbbk[\mathcal{Y}] & \simeq \Gamma(\tilde{\mathcal{Y}}, \mathcal{O}_{\tilde{\mathcal{Y}}}) \\
    & \simeq \bigoplus_{\substack{\mu \in X^*(T)_\ominus \\ k \geq 0}} \Gamma\left(G/B, \; \mathscr{L}_{G/B}(\Bbbk_\mu \otimes \Sym^k (N^\vee)) \right).
  \end{align*}
  On the right hand side, the group $G$ acts on each of the summands $\Gamma(G/B, \cdots)$, whereas the $X^*(T)$-grading by $\mu$ and the $\Z$-grading by $k$ give the $T \times \Gm$-action.
\end{proposition}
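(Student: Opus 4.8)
The plan is to deduce both isomorphisms by combining Lemma \ref{prop:resolution-Y} with the affineness of the bundle map $p\colon \tilde{\mathcal{Y}} \to G/B$ and the pushforward computation of Proposition \ref{prop:bundle-pushforward}, and then to read off the $G \times T \times \Gm$-equivariance from the definition of the action on $\tilde{\mathcal{Y}}$.

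For the isomorphism $\Bbbk[\mathcal{Y}] \simeq \Gamma(\tilde{\mathcal{Y}}, \mathcal{O}_{\tilde{\mathcal{Y}}})$ I would invoke Lemma \ref{prop:resolution-Y}: it already gives that $\pi\colon \tilde{\mathcal{Y}} \to \mathcal{Y}$ is proper and birational onto the normal variety $\mathcal{Y}$, hence $\pi_*\mathcal{O}_{\tilde{\mathcal{Y}}} = \mathcal{O}_{\mathcal{Y}}$. Since $\mathcal{Y}$ is affine, taking global sections yields $\Bbbk[\mathcal{Y}] = \Gamma(\mathcal{Y},\mathcal{O}_{\mathcal{Y}}) = \Gamma(\mathcal{Y}, \pi_*\mathcal{O}_{\tilde{\mathcal{Y}}}) = \Gamma(\tilde{\mathcal{Y}},\mathcal{O}_{\tilde{\mathcal{Y}}})$. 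Equivariance is automatic because $\pi$ was built to be $G \times T \times \Gm$-equivariant.

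For the second isomorphism: the morphism $p\colon \tilde{\mathcal{Y}} = G \utimes{B}(\Pi \times N) \to G/B$ is affine, its fibre being $\Pi \times N = \Spec(R \otimes_\Bbbk \Sym(N^\vee))$, where $R = \bigoplus_{\mu \in X^*(T)_\ominus}\Bbbk_\mu$ is the $B$-algebra (inflated from $T$) defining $\Pi$. I would identify $p_*\mathcal{O}_{\tilde{\mathcal{Y}}}$ with $\mathscr{L}_{G/B}(R \otimes \Sym(N^\vee))$ — this is Proposition \ref{prop:bundle-pushforward}(1) applied in two stages, first peeling off the vector-bundle factor $N$ and then the affine toric factor $\Pi$ — and decompose it, using $\Bbbk_{\mu_1}\Bbbk_{\mu_2} = \Bbbk_{\mu_1+\mu_2}$ and $\Sym(N^\vee) = \bigoplus_{k\ge 0}\Sym^k(N^\vee)$, as $\bigoplus_{\mu,k}\mathscr{L}_{G/B}(\Bbbk_\mu \otimes \Sym^k(N^\vee))$. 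Then $\Gamma(\tilde{\mathcal{Y}},\mathcal{O}_{\tilde{\mathcal{Y}}}) = \Gamma(G/B, p_*\mathcal{O}_{\tilde{\mathcal{Y}}})$ by affineness (Proposition \ref{prop:bundle-pushforward}(2)), and since $G/B$ is noetherian the functor $\Gamma(G/B,-)$ commutes with this direct sum (Proposition \ref{prop:bundle-pushforward}(3)), which produces the asserted decomposition. Finally I would match the actions with the gradings: $G$ acts by left translation over its action on $G/B$ and each $\mathscr{L}_{G/B}(-)$ is canonically $G$-linearized, so $G$ acts on every summand $\Gamma(G/B,\mathscr{L}_{G/B}(\cdots))$; $T$ acts through $\Pi$, hence with weight $\mu$ on $\Bbbk_\mu \subset R$, which is exactly the $X^*(T)_\ominus$-grading; and $\Gm$ acts by dilation with $z^{-1}$ on $N$, hence with weight $k$ on $\Sym^k(N^\vee)$, giving the $\Z_{\geq 0}$-grading by $k$. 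The three actions commute and are respected by all the identifications above.

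I do not expect a serious obstacle: the essential geometric inputs — birationality together with normality forcing $\pi_*\mathcal{O}_{\tilde{\mathcal{Y}}} = \mathcal{O}_{\mathcal{Y}}$, and affineness of $p$ with the pushforward formula — are already in hand. The only points requiring care are the interchange of $\Gamma(G/B,-)$ with the infinite direct sum over $(\mu,k)$ (legitimate since $G/B$ is noetherian and the sum is a filtered colimit of its coherent partial sums) and keeping the $T$- and $\Gm$-weights straight, in particular the appearance of $N^\vee$ rather than $N$ coming from the identity $p_*p^*\mathscr{L} = \mathscr{L}\otimes\Sym(\mathcal{F}^\vee)$.
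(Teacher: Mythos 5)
Your proposal is correct and follows essentially the same route as the paper: Lemma \ref{prop:resolution-Y} (properness, birationality, normality, so $\pi_*\mathcal{O}_{\tilde{\mathcal{Y}}}=\mathcal{O}_{\mathcal{Y}}$) gives $\Bbbk[\mathcal{Y}]\simeq\Gamma(\tilde{\mathcal{Y}},\mathcal{O}_{\tilde{\mathcal{Y}}})$, and the affineness of $p$ together with the pushforward decomposition (the paper's \eqref{eqn:Y-sections-0}--\eqref{eqn:Y-sections-1} in degree $i=0$) gives the graded sum, with equivariance read off from the definition of the $G\times T\times\Gm$-action. The only cosmetic caveat is that Proposition \ref{prop:bundle-pushforward}(1) literally concerns vector bundles, so for the toric factor $\Pi$ you should say (as the paper implicitly does by ``recalling the description of $\Pi=\Spec R$'') that the pushforward along the affine fibration $G\utimes{B}(\Pi\times N)\to G/B$ is the associated sheaf $\mathscr{L}_{G/B}(R\otimes\Sym(N^\vee))$, which then splits by the $X^*(T)_\ominus$- and $\Z_{\geq 0}$-gradings.
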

\begin{proof}
  By Lemma \ref{prop:resolution-Y}, we have an equivariant identification $\Bbbk[\mathcal{Y}] = H^0(\mathcal{Y}, R\pi_* \mathcal{O}_{\tilde{\mathcal{Y}}})$, which equals $\Gamma(\tilde{\mathcal{Y}}, \mathcal{O}_{\tilde{\mathcal{Y}}})$. From \eqref{eqn:Y-sections-0} and \eqref{eqn:Y-sections-1} with $i=0$ we obtain the required isomorphism. It remains to recall the definition of the $G \times T \times \Gm$-action on $\tilde{\mathcal{Y}}$.
\end{proof}

\begin{proposition}\label{prop:Y-ratsing}
  The $\Bbbk$-varieties $\tilde{\mathcal{Y}}$ and $\mathcal{Y}$ have rational singularities.
\end{proposition}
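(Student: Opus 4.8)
The plan is to deduce this from the classical fact that a normal affine toric variety has rational singularities, and then to propagate the property along the two morphisms provided by Lemma \ref{prop:resolution-Y}; the whole argument parallels \cite[2.13]{Bro93}.

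First I would establish that $\tilde{\mathcal{Y}}$ has rational singularities. The fibre of the bundle $p: \tilde{\mathcal{Y}} = G \utimes{B}(\Pi \times N) \to G/B$ is the product $\Pi \times N$ of the normal affine toric variety $\Pi = \Spec R$ with the affine space $N$; since $\Pi$ has rational singularities (being normal and toric) and $N$ is smooth, the product $\Pi \times N$ has rational singularities as well — a resolution of $\Pi$ multiplied by $\mathrm{id}_N$ resolves $\Pi \times N$, and the vanishing of the higher direct images of the structure sheaf is preserved under flat base change along $\Pi \times N \to \Pi$. Because $B$ is a parabolic subgroup, $G \to G/B$, and hence $p$, is Zariski-locally trivial (Bruhat decomposition; cf. Remark \ref{rem:locally-trivial}), so $\tilde{\mathcal{Y}}$ is Zariski-locally isomorphic to the product of an open subset of $G/B$ with $\Pi \times N$. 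As the property of having rational singularities is Zariski-local and stable under products with smooth varieties, $\tilde{\mathcal{Y}}$ has rational singularities. One could alternatively observe that $p$ is a flat morphism with smooth base and fibres having rational singularities, or simply transcribe the homogeneous-fibre-space argument of \cite[2.13]{Bro93}.

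Next, by Lemma \ref{prop:resolution-Y} the morphism $\pi: \tilde{\mathcal{Y}} \to \mathcal{Y}$ is proper and birational with $R\pi_* \mathcal{O}_{\tilde{\mathcal{Y}}} = \mathcal{O}_{\mathcal{Y}}$, and $\mathcal{Y}$ is normal, being a normalization. Choosing any resolution $g: \mathcal{Y}'' \to \tilde{\mathcal{Y}}$ and using that $\tilde{\mathcal{Y}}$ has rational singularities, we get $Rg_* \mathcal{O}_{\mathcal{Y}''} = \mathcal{O}_{\tilde{\mathcal{Y}}}$, so that $\pi \circ g: \mathcal{Y}'' \to \mathcal{Y}$ is a resolution of $\mathcal{Y}$ satisfying
$$ R(\pi \circ g)_* \mathcal{O}_{\mathcal{Y}''} = R\pi_* Rg_* \mathcal{O}_{\mathcal{Y}''} = R\pi_* \mathcal{O}_{\tilde{\mathcal{Y}}} = \mathcal{O}_{\mathcal{Y}}; $$
hence $R^i(\pi \circ g)_* \mathcal{O}_{\mathcal{Y}''} = 0$ for $i \geq 1$, and $\mathcal{Y}$ has rational singularities. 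I do not anticipate a genuine obstacle: the content is entirely formal once Lemma \ref{prop:resolution-Y} is in hand. The only points deserving explicit mention are that having rational singularities is local in the Zariski topology and may be tested on a single resolution (both standard in characteristic zero), together with the classical rationality of toric singularities.
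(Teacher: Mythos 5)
Your proof is correct, and its overall skeleton is the same as the paper's: first handle $\tilde{\mathcal{Y}}$ via the rationality of toric singularities and the fibration $p$, then descend to $\mathcal{Y}$ along $\pi$ using Lemma \ref{prop:resolution-Y}. The difference is that you replace the paper's two citations by self-contained arguments. For $\tilde{\mathcal{Y}}$, the paper invokes \cite[Theorem A.5]{Ti11} for the bundle $p$ with fibers $\Pi \times N$; you instead exploit Zariski-local triviality of $G \to G/B$ (Remark \ref{rem:locally-trivial}, valid since $B$ is parabolic) and the stability of rational singularities under products with smooth varieties, which is a hands-on version of the same statement and is fine here, though the cited theorem would also cover bundles that are only étale-locally trivial. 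For $\mathcal{Y}$, the paper applies Kov\'acs' criterion \cite[Theorem 1]{Kov00}, which only needs a splitting of $\mathcal{O}_{\mathcal{Y}} \to R\pi_*\mathcal{O}_{\tilde{\mathcal{Y}}}$; you avoid it by composing a resolution of $\tilde{\mathcal{Y}}$ with $\pi$ and using the full equality $R\pi_*\mathcal{O}_{\tilde{\mathcal{Y}}} = \mathcal{O}_{\mathcal{Y}}$ from Lemma \ref{prop:resolution-Y}, together with the standard facts that rational singularities may be tested on a single resolution and that $R(\pi\circ g)_* = R\pi_* Rg_*$. Since the lemma does provide the full isomorphism (not merely a direct summand), your elementary composition argument is complete; what the paper's route buys is brevity and a criterion that would still apply in situations where only a splitting is available.
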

We refer to \cite[A.1]{Ti11} for the backgrounds about rational singularities.
\begin{proof}
  Consider the case for $\tilde{\mathcal{Y}}$ first. The normal toric variety $\Pi$ is known to have rational singularities \cite[p.76]{Fu93}, hence so do the fibers of $p: \tilde{\mathcal{Y}} \to G/B$. On the other hand, the variety $G/B$ certainly has rational singularities. We conclude by applying \cite[Theorem A.5]{Ti11} to the bundle $p$.

  As for $\mathcal{Y}$, apply \cite[Theorem 1]{Kov00} to $\pi$ together with Lemma \ref{prop:resolution-Y}.
\end{proof}

\subsection{On certain generalized Kostka-Foulkes polynomials}\label{sec:certain-gKF}
Retain the previous notations and choose the data as in Example \ref{ex:basic-fcn}. To be precise, we have now a short exact sequence of connected reductive $\Bbbk$-groups
$$ 1 \to \Gm \to G \to G_0 \to 1 $$
with $G_0$ semisimple, and a finite-dimensional representation $(V,\rho)$ of $G$ such that $\rho(z)=z\cdot\identity$ for all $z \in \Gm$. Define the set with multiplicities
$$ \Psi = \Supp(V) \sqcup \Sigma_B, $$
which is formed by the $T$-weights of the representation $N = (V, \rho|_B) \oplus (\mathfrak{u}, \Ad|_B)$ of $B$. Dualizing $\Gm \hookrightarrow G$ furnishes a surjective homomorphism $\det_G: X^*(T) \to X^*(\Gm)=\Z$.

\begin{definition}\label{def:X-ominus}
  Define the monoid
  \begin{gather}
    X^*(T)_\ominus := \left\{ \mu \in X^*(T)_- : \det_G(\mu) \geq 0 \right\}.
  \end{gather}
  and form the corresponding normal affine toric variety $\Pi := \Spec(R)$. See also the Remark \ref{rem:smaller-monoid}
\end{definition}

\begin{lemma}
  The Hypothesis \ref{hyp:N} is satisfied with the choice of $N$ and $X^*(T)_\ominus$ above.
\end{lemma}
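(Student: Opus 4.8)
The plan is to check, separately, the two clauses of Hypothesis \ref{hyp:N} for the data $N = (V,\rho|_B)\oplus(\mathfrak{u},\Ad|_B)$ and $X^*(T)_\ominus = \{\mu\in X^*(T)_- : \det_G(\mu)\geq 0\}$ of Example \ref{ex:basic-fcn}. Both amount to routine compatibility checks with the geometry already established in \S\ref{sec:variety-Y}, so I do not expect a genuine obstacle.

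For clause (1) — birationality of $\eta: G\utimes{B}N\to G\cdot N$ and of $\pi_0: \tilde{\mathcal{Y}} = G\utimes{B}(\Pi\times N)\to\mathcal{Y}_0 = G\cdot(\Pi\times N)$ — I would simply appeal to Remark \ref{rem:birationality}. Here $N$ has the shape $N_0\oplus\mathfrak{u}$ with $N_0 = V$, so the argument recalled there — restricting the collapsing to the $B$-stable open locus over the regular nilpotent elements, where $G\utimes{B}\mathfrak{u}_\text{reg}\xrightarrow{\sim}\mathcal{N}_\text{reg}$ — yields birationality of $\eta$ at once, and the same open-locus argument carried out over $\Pi\times V\times\mathfrak{u}_\text{reg}\subset\Pi\times V\times\mathfrak{u}$ (taking the $B$-variety $\Pi\times V$ in the role of $N_0$) handles $\pi_0$.

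For clause (2), I would unwind the vanishing condition \eqref{eqn:mu-vanishing-condition} with $P = B$. Then $\Sigma_P = \Sigma_B$, and since $\Psi = \Supp(V)\sqcup\Sigma_B$ the positive-root contributions cancel, leaving the requirement that $\mu + \sum_{\nu\in\Supp(V)}\nu\in X^*(T)_-$ for every $\mu\in X^*(T)_\ominus$. The key point is that $\sum_{\nu\in\Supp(V)}\nu$ is the $T$-weight of the line $\topwedge V$, a one-dimensional representation of $G$; equivalently, since $\Supp(V)$ is stable under $W$ as a multiset, this sum is $W$-invariant. Hence it pairs to zero with every coroot, and in particular it lies in $X^*(T)_-$. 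As $X^*(T)_-$ is a cone containing $X^*(T)_\ominus\ni\mu$, the translate $\mu + \sum_{\nu\in\Supp(V)}\nu$ again lies in $X^*(T)_-$, which is exactly \eqref{eqn:mu-vanishing-condition}.

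The only items demanding a little care are the bookkeeping — making sure the $T$-weights of $\mathfrak{u}$ occurring in $\Psi$ are precisely the positive roots $\Sigma_B$, so that the $\Sigma_P$-term cancels them exactly — and the (mild) observation that the condition $\det_G(\mu)\geq 0$ plays no role in \eqref{eqn:mu-vanishing-condition}: it is needed only upstream, to ensure that $X^*(T)_\ominus$ spans a strongly convex cone so that the toric variety $\Pi$ of \S\ref{sec:variety-Y} is defined (cf. the discussion there and Remark \ref{rem:smaller-monoid}). In short, this lemma is exactly the assertion that Example \ref{ex:basic-fcn} fits into the framework of \S\ref{sec:variety-Y}, and the verification is forced by the definitions.
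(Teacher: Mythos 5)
Your proposal is correct and follows the paper's own argument: clause (1) is handled exactly as in the paper by invoking Remark \ref{rem:birationality} with $N_0 = V$ (including the extension to $\pi_0$), and clause (2) reduces, after the cancellation of $\Sigma_B$ against $\Sigma_P$, to showing $\mu + \theta \in X^*(T)_-$ for $\theta = \sum_{\nu\in\Supp(V)}\nu$, which you establish via the $W$-invariance of $\theta$ (it pairs to zero with all coroots), precisely the paper's reasoning.
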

\begin{proof}
  In Remark \ref{rem:birationality} (with $N_0 := V$) we have seen that the first condition in Hypothesis \ref{hyp:N} is satisfied. Let $\theta := \sum_{\alpha \in \Supp(V)} \alpha$, the second condition amounts to
  $$ \forall \mu \in X^*(T)_\ominus, \quad \mu + \theta \in X^*(T)_- . $$
  Since $V$ is a representation of $G$, the sum $\theta$ is fixed by every root reflection, hence $\angles{\theta, \beta^\vee} = 0$ for every $\beta \in \Delta_B$. Therefore $\mu + \theta \in X^*(T)_-$ as required.
\end{proof}

\begin{remark}\label{rem:smaller-monoid}
  In practice one can often work with a smaller monoid. For example, after proving Theorem \ref{prop:basicfun-gen} one will see that the submonoid $\mathcal{C}_\rho \cap X_*(T)_-$ suffices for the study of the coefficients of the basic function $f_\rho$ (cf. Corollary \ref{prop:basic-supp}).
\end{remark}

The results in \S\ref{sec:variety-Y} are thus applicable. Define the morphisms
$$\xymatrix{
  & \tilde{\mathcal{Y}} = G \utimes{B} (\Pi \times N) \ar[ld]_{p} \ar[rd]^{\pi} & \\
  G/B & & \mathcal{Y}
}$$
accordingly. Recall that the $\Gm$-action on $\tilde{\mathcal{Y}}$ comes from the dilation on $N = V \times \mathfrak{u}$. It will be convenient to thicken it to a $\Gm \times \Gm$-action, namely
$$ \forall (z, w) \in \Gm \times \Gm, \quad (z,w) \cdot [g, (\pi, v, u)] = [g, (\pi, z^{-1}v, w^{-1}u)] $$
for all $[g, (\pi, v,u)] \in \tilde{\mathcal{Y}}$, where $(\pi, v, u) \in \Pi \times V \times \mathfrak{u}$. Restriction to the diagonal $\Gm$ gives back the original action.

Introduce indeterminates $X$ and $q$.

\begin{definition}
  Using the $G \times T \times \Gm^2$-action on $\mathcal{Y}$, we define the Poincaré series
  $$ \mathbf{P}(\Bbbk[\mathcal{Y}]; X, q) = \sum_{\lambda \in X^*(T)_-} \sum_{\mu \in X^*(T)_\ominus} m^\mu_{\lambda, \Psi}(X,q) e^\mu [V(\lambda)] $$
  for some $m^\mu_{\lambda, \Psi}(X,q) \in \Z[X,q]$, where $\lambda$ (resp. $\mu$) corresponds to the $G$-action (resp. $T$-action), and $(X,q)$ corresponds to the $\Gm^2$-action described above. Cf. the Definition \ref{def:P-series}.
\end{definition}

\begin{theorem}\label{prop:qq}
  The generalized Kostka-Foulkes polynomials associated to $\Psi$ are recovered via
  $$ m^\mu_{\lambda, \Psi}(q) = m^\mu_{\lambda,\Psi}(q,q), \quad \lambda \in X^*(T)_-, \mu \in X^*(T)_\ominus. $$
\end{theorem}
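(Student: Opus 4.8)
The plan is to read off both $m^\mu_{\lambda,\Psi}(X,q)$ and $m^\mu_{\lambda,\Psi}(q)$ from one and the same $G$-module, the only difference being that the two-variable polynomial records a finer bigrading which the one-variable polynomial collapses by adding the two degrees. Fix $\mu\in X^*(T)_\ominus$ and put $M_\mu:=\Gamma\big(G\utimes{B}N,\,p^*\mathscr{L}_{G/B}(\mu)\big)$. By Propositions~\ref{prop:kY} and \ref{prop:bundle-pushforward}, $M_\mu$ is the $\mu$-graded piece of $\Bbbk[\mathcal{Y}]$ for the $T$-action, and it carries a $G\times\Gm^2$-action in which the two copies of $\Gm$ dilate, respectively, the summands $V$ and $\mathfrak{u}$ of $N=V\oplus\mathfrak{u}$; restricting along the diagonal $\Gm\hookrightarrow\Gm^2$ gives back the single dilation grading used in \S\ref{sec:gKF} to define $m^\mu_{\lambda,\Psi}(q)$ in the case $\Psi=\Supp(V)\sqcup\Sigma_B$, $N=V\oplus\mathfrak{u}$.

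First I would make the bigrading explicit. From $N^\vee=V^\vee\oplus\mathfrak{u}^\vee$ one gets the $B\times\Gm^2$-equivariant decomposition $\Sym^k(N^\vee)=\bigoplus_{a+b=k}\Sym^a(V^\vee)\otimes\Sym^b(\mathfrak{u}^\vee)$, in which $\Sym^a(V^\vee)\otimes\Sym^b(\mathfrak{u}^\vee)$ lies in $\Gm^2$-bidegree $(a,b)$ and hence in diagonal-$\Gm$ degree $a+b$. Thus, by Proposition~\ref{prop:bundle-pushforward} applied to $p$, the bidegree $(a,b)$ part of $M_\mu$ is $M_\mu^{a,b}:=\Gamma\big(G/B,\,\mathscr{L}_{G/B}(\Bbbk_\mu\otimes\Sym^a(V^\vee)\otimes\Sym^b(\mathfrak{u}^\vee))\big)$, a finite-dimensional $G$-module, and the diagonal degree-$k$ part of $M_\mu$ is $\bigoplus_{a+b=k}M_\mu^{a,b}$. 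Set $d_{a,b}:=\dim_\Bbbk\Hom_G\big(V(\lambda),M_\mu^{a,b}\big)$, which is the coefficient of $[V(\lambda)]$ in $\mathbf{P}(M_\mu^{a,b})$ (cf. Corollary~\ref{prop:K-as-P-series}). By the definition of $m^\mu_{\lambda,\Psi}(X,q)$ through $\mathbf{P}(\Bbbk[\mathcal{Y}];X,q)$, the coefficient of $e^\mu[V(\lambda)]$ there is $m^\mu_{\lambda,\Psi}(X,q)=\sum_{a,b\ge0}d_{a,b}\,X^aq^b$.

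Next I would bring in the cohomology vanishing. By the Lemma following Definition~\ref{def:X-ominus}, Hypothesis~\ref{hyp:N} holds for the present choice of $N$ and $X^*(T)_\ominus$; in particular condition \eqref{eqn:mu-vanishing-condition} of Theorem~\ref{prop:vanishing} holds for every $\mu\in X^*(T)_\ominus$, whence $H^i\big(G\utimes{B}N,\,p^*\mathscr{L}_{G/B}(\mu)\big)=0$ for all $i\ge1$. Corollary~\ref{prop:gKF-positivity} (with $P=B$) therefore gives, for the diagonal $\Gm$-grading,
\[
  \mathbf{P}\big(M_\mu;q\big)=\sum_{\lambda\in X^*(T)_-}m^\mu_{\lambda,\Psi}(q)\,[V(\lambda)].
\]
Since $G$-representations in characteristic zero are completely reducible, $\dim_\Bbbk\Hom_G(V(\lambda),-)$ is additive over direct sums, so the coefficient of $[V(\lambda)]$ on the left equals $\sum_{k\ge0}\big(\sum_{a+b=k}d_{a,b}\big)q^k=\sum_{a,b\ge0}d_{a,b}\,q^{a+b}$. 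Comparing with $m^\mu_{\lambda,\Psi}(X,q)=\sum_{a,b}d_{a,b}X^aq^b$ and substituting $X=q$ yields $m^\mu_{\lambda,\Psi}(q,q)=\sum_{a,b}d_{a,b}q^{a+b}=m^\mu_{\lambda,\Psi}(q)$, which is the assertion.

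No step here is genuinely difficult: the whole argument is a bookkeeping of gradings on a single module. The one point that deserves care is the compatibility of the thickened $\Gm^2$-linearization on $\tilde{\mathcal{Y}}$ — hence on $\Gamma(\tilde{\mathcal{Y}},\mathcal{O}_{\tilde{\mathcal{Y}}})\simeq\Bbbk[\mathcal{Y}]$ — with the decomposition $\Sym^\bullet(N^\vee)=\Sym^\bullet(V^\vee)\otimes\Sym^\bullet(\mathfrak{u}^\vee)$ and with its restriction along the diagonal $\Gm\hookrightarrow\Gm^2$; once Proposition~\ref{prop:kY} is available this is immediate from the way the $\Gm^2$-action was defined, but it is exactly the place where one should re-check the signs of the weights (recall the convention that $\Gm$ dilates the fibers of $p$ by $z^{-1}$, so that it acts on $\Sym^k(N^\vee)$ with the positive weight $k$).
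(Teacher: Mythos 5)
Your argument is correct and follows essentially the same route as the paper: both rest on Hypothesis \ref{hyp:N} ensuring the vanishing needed for Corollary \ref{prop:gKF-positivity}, the identification of $\bigoplus_{\mu}\Gamma\big(G\utimes{B}N,\,p^*\mathscr{L}_{G/B}(\mu)\big)$ with $\Bbbk[\mathcal{Y}]$ via Propositions \ref{prop:bundle-pushforward} and \ref{prop:kY}, and the observation that the single dilation grading is the restriction of the $\Gm^2$-bigrading along the diagonal, so that $X=q$ recovers $m^\mu_{\lambda,\Psi}(q)$. Your write-up merely makes the bigrading through $\Sym^a(V^\vee)\otimes\Sym^b(\mathfrak{u}^\vee)$ and the coefficient bookkeeping explicit, which the paper leaves implicit.
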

\begin{proof}
  In view of Hypothesis \ref{hyp:N}, we may apply Corollary \ref{prop:gKF-positivity} to every $\mu \in X^*(T)_\ominus$ to get
  $$ \mathbf{P} \left( \; \bigoplus_{\mu \in X^*(T)_\ominus} \Gamma \left( G \utimes{B} N, \; r^*\mathscr{L}_{G/B}(\mu) \right); q \right) = \sum_{\lambda \in X^*(T)_-} \; \sum_{\mu \in X^*(T)_\ominus} m^\mu_{\lambda, \Psi}(q) e^\mu [V(\lambda)] $$
  where $r: G \utimes{B} N \to G/B$. Furthermore,
  \begin{align*}
    \bigoplus_{\mu \in X^*(T)_\ominus} \Gamma \left( G \utimes{B} N, \; r^*\mathscr{L}_{G/B}(\mu) \right) & \simeq \bigoplus_{\substack{\mu \in X^*(T)_\ominus \\ k \geq 0}} \Gamma\left(G/B, \; \mathscr{L}_{G/B}(\Bbbk_\mu \otimes \Sym^k (N^\vee)) \right) \\
    & \simeq \Bbbk[\mathcal{Y}];
  \end{align*}
  the first isomorphism follows from Proposition \ref{prop:bundle-pushforward} and the second follows from Proposition \ref{prop:kY}. The $G \times T \times \Gm$-actions are respected ($\Gm \hookrightarrow \Gm^2$ diagonally), whence the assertion.
\end{proof}

Next, let $\kappa: G \utimes{B} \mathfrak{u} \to G/B$ and define the $\Bbbk$-algebra
\begin{gather}\label{eqn:Z}
  \mathcal{Z} := \Sym(V^\vee) \otimes \left(\; \bigoplus_{\mu \in X^*(T)_\ominus} \Gamma(G \utimes{B} \mathfrak{u}, \; \kappa^* \mathscr{L}_{G/B}(\mu) ) ) \right).
\end{gather}

The algebra $\mathcal{Z}$ is a direct sum of irreducible finite-dimensional representation of $G$, since both tensor slots are. Moreover, it is equipped with
\begin{compactitem}
  \item the $X^*(T)_\ominus$-grading via $\mu$;
  \item the $\Z_{\geq 0}$-grading on the first tensor slot, by placing $\Sym^k(V^\vee)$ in degree $k$;
  \item the $\Z_{\geq 0}$-grading on the second tensor slot, via the familiar dilation on the fibers of $G \utimes{B} \mathfrak{u} \to G/B$.
\end{compactitem}
They are compatible with the $G$-representation structure. The invariant subalgebra $\mathcal{Z}^G$ is thus graded by $X^*(T)_\ominus \times (\Z_{\geq 0})^2$.

\begin{theorem}\label{prop:Z-P-series}
  There is a canonical isomorphism of $\Bbbk$-algebras $\mathcal{Z} \rightiso \Bbbk[\mathcal{Y}]$ respecting the $G$-actions and gradings. Consequently,
  \begin{align*}
    \mathbf{P}(\mathcal{Z}; X, q) & = \sum_{\lambda \in X^*(T)_-} \sum_{\mu \in X^*(T)_\ominus} m^\mu_{\lambda, \Psi}(X,q) e^\mu [V(\lambda)], \\
    \mathbf{P}(\mathcal{Z}^G; X, q) & = \sum_{\mu \in X^*(T)_\ominus} m^\mu_{0, \Psi}(X, q) e^\mu.
  \end{align*}
\end{theorem}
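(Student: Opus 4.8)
The plan is to deduce everything from a single geometric observation: in $\tilde{\mathcal Y} = G \utimes{B}(\Pi \times N)$ with $N = V \oplus \mathfrak u$, the summand $V$ carries an action of the whole group $G$, not merely of $B$, so it can be ``straightened out''. Concretely, writing $\Pi \times N = (\Pi \times \mathfrak u) \times V$, I would apply the projection formula \eqref{eqn:proj-formula} with $H = B$, the $G$-variety being $V$ and the $B$-variety being $\Pi \times \mathfrak u$, to obtain a $G$-equivariant isomorphism
$$ \tilde{\mathcal Y} \;\simeq\; V \times \bigl( G \utimes{B}(\Pi \times \mathfrak u) \bigr). $$
I would then check that this isomorphism is in fact $G \times T \times \Gm^2$-equivariant: the $T$-action touches only the $\Pi$-factor on both sides, the first $\Gm$ (the $X$-variable) acts by dilation on the free $V$-factor, and the second $\Gm$ (the $q$-variable) acts by dilation on the fibres of $G \utimes{B}(\Pi \times \mathfrak u) \to G/B$ coming from $\mathfrak u$.

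Next I would pass to global regular functions. Since $V$ is affine, $\Gamma\bigl(V \times (G \utimes{B}(\Pi \times \mathfrak u)), \mathcal O\bigr) \simeq \Sym(V^\vee) \otimes_\Bbbk \Gamma\bigl(G \utimes{B}(\Pi \times \mathfrak u), \mathcal O\bigr)$ as graded $\Bbbk$-algebras, the grading on the factor $\Sym^k(V^\vee)$ being the $X$-grading. A direct computation with Proposition \ref{prop:bundle-pushforward} and the toric description $\Pi = \Spec R$, $R = \bigoplus_{\mu \in X^*(T)_\ominus}\Bbbk_\mu$ (exactly as in the proof of Proposition \ref{prop:kY}, but with $N$ replaced by $\mathfrak u$) identifies $\Gamma\bigl(G \utimes{B}(\Pi \times \mathfrak u), \mathcal O\bigr)$ with $\bigoplus_{\mu \in X^*(T)_\ominus}\Gamma\bigl(G \utimes{B}\mathfrak u, \kappa^*\mathscr{L}_{G/B}(\mu)\bigr)$, the $\mu$-grading recording the $T$-action and the remaining $\Z_{\geq 0}$-grading recording the $q$-variable. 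Combining these with the identity $\Bbbk[\mathcal Y] = \Gamma(\tilde{\mathcal Y}, \mathcal O_{\tilde{\mathcal Y}})$ established in the proof of Proposition \ref{prop:kY} (which rests on Lemma \ref{prop:resolution-Y}) yields the asserted canonical isomorphism $\mathcal Z \rightiso \Bbbk[\mathcal Y]$ of $\Bbbk$-algebras compatible with the $G$-action and with the three gradings.

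The two Poincaré-series formulas are then read off. By the very definition of the polynomials $m^\mu_{\lambda,\Psi}(X,q)$ through $\mathbf P(\Bbbk[\mathcal Y]; X, q)$, the isomorphism $\mathcal Z \simeq \Bbbk[\mathcal Y]$ gives the first equality at once. For the second, $\mathcal Z^G$ is the isotypic component of $\mathcal Z$ attached to the trivial representation $V(0)$, and $0 \in X^*(T)_-$, so $\mathbf P(\mathcal Z^G; X, q)$ is the coefficient of $[V(0)]$ in $\mathbf P(\mathcal Z; X, q)$, namely $\sum_{\mu \in X^*(T)_\ominus} m^\mu_{0,\Psi}(X,q)\, e^\mu$. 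I do not expect a serious obstacle: the only real input is the applicability of \eqref{eqn:proj-formula}, which hinges on $V$ --- unlike $\mathfrak u$ --- being a $G$-variety, and the remaining effort is the bookkeeping needed to confirm that \eqref{eqn:proj-formula}, the factorization of $\Gamma$ over a product, and Proposition \ref{prop:bundle-pushforward} are all isomorphisms of sheaves of algebras respecting the $T \times \Gm^2$-gradings and not merely the $G$-action, and that these gradings are matched with the correct indeterminates $X$ and $q$ as fixed in the $\Gm^2$-action above.
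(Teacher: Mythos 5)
Your proposal is correct and follows essentially the same route as the paper's own proof: split off the $G$-module $V$ via the projection formula \eqref{eqn:proj-formula} to get $\tilde{\mathcal Y} \simeq V \times (G \utimes{B}(\Pi \times \mathfrak u))$, take global sections (Künneth), recognize the second factor via Proposition \ref{prop:kY} and \eqref{eqn:Y-sections-1} with $N=\mathfrak u$, and read off the Poincaré series, the invariant part being the $[V(0)]$-coefficient. The equivariance bookkeeping you flag (which $\Gm$ matches $X$ and which matches $q$) is exactly the point the paper summarizes as ``the structures of $G$-representations and gradings match,'' and your identification is the right one.
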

\begin{proof}
  By \eqref{eqn:proj-formula}, there is an equivariant isomorphism
  $$ \tilde{\mathcal{Y}} = G \utimes{B} (\Pi \times V \times \mathfrak{u}) \simeq V \times \left( G \utimes{B} (\Pi \times \mathfrak{u}) \right). $$
  By Künneth formula, $\Bbbk[\mathcal{Y}] \simeq \Gamma(\tilde{\mathcal{Y}}, \mathcal{O}_{\tilde{\mathcal{Y}}})$ is isomorphic to $\mathcal{Z}$. Indeed, the first tensor slot of $\mathcal{Z}$ is simply $\Bbbk[V]$, whilst the Proposition \ref{prop:kY} and \eqref{eqn:Y-sections-1} in the case $N=\mathfrak{u}$ recognizes the second tensor slot. It is easy to see that the structures of $G$-representations and gradings match, and the equalities of Poincaré series follow.
\end{proof}

\begin{corollary}\label{prop:X-degree}
  For every $\mu \in X^*(T)_\ominus$, we have $m^\mu_{0,\Psi}(X, q) = m^\mu_{0,\Psi}(1, q) X^{\det_G(\mu)}$. Consequently, $m^\mu_{0, \Psi}(q) = m^\mu_{0,\Psi}(1, q) q^{\det_G(\mu)}$.
\end{corollary}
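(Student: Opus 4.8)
The strategy is to track the action of the central torus $\Gm \hookrightarrow G$ appearing in $1 \to \Gm \to G \to G_0 \to 1$ on the algebra $\mathcal{Z}$, and to play it against the $X$-grading. The guiding point is that this $\Gm$ lies inside $G$, hence acts trivially on $\mathcal{Z}^G$; comparing this with how it acts on the homogeneous pieces of $\mathcal{Z}$ forces the $X$-degree of every element of $\mathcal{Z}^G$ to equal $\det_G(\mu)$, which is exactly the claim.

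\textbf{Steps.} First I would use Theorem \ref{prop:Z-P-series} together with Proposition \ref{prop:kY} (applied with $N = V \oplus \mathfrak{u}$, and the multiplicativity $\mathscr{L}_{G/B}(W_1 \otimes W_2) \simeq \mathscr{L}_{G/B}(W_1) \otimes \mathscr{L}_{G/B}(W_2)$) to write
\[
  \mathcal{Z} \;\cong\; \Bbbk[\mathcal{Y}] \;\cong\; \bigoplus_{\substack{\mu \in X^*(T)_\ominus \\ a,\,b \geq 0}} \Gamma\!\Bigl(G/B,\; \mathscr{L}_{G/B}\bigl(\Bbbk_\mu \otimes \Sym^a(V^\vee) \otimes \Sym^b(\mathfrak{u}^\vee)\bigr)\Bigr),
\]
where $a$ records the $X$-degree (the $\Sym V^\vee$-grading) and $b$ the $q$-degree (the dilation grading on $G \utimes{B} \mathfrak{u}$). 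The central $\Gm$ lies in $B$ and acts trivially on $G/B$, so a direct check with the conventions for $\mathscr{L}_{G/B}(-)$ in \S\ref{sec:gKF} shows it acts on each summand through the single character by which it acts on the $B$-representation $\Bbbk_\mu \otimes \Sym^a(V^\vee) \otimes \Sym^b(\mathfrak{u}^\vee)$. That character is immediate: a central $z$ acts on $\Bbbk_\mu$ by $z^{\det_G(\mu)}$ (definition of $\det_G$), on $\Sym^a(V^\vee)$ by the scalar $z^{-a}$ (since $\rho(z) = z\cdot\identity$ forces $z$ to act by $z^{-1}$ on all of $V^\vee$), and trivially on $\Sym^b(\mathfrak{u}^\vee)$ (a central element acts trivially on $\mathfrak{g} \supset \mathfrak{u}$). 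Hence the central $\Gm$ acts on the $(\mu,a,b)$-component of $\mathcal{Z}$ uniformly by $z \mapsto z^{\det_G(\mu)-a}$, independently of $b$ and of the decomposition into irreducible $G$-constituents.

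Since $\Gm \subset G$, this character must be trivial on $\mathcal{Z}^G$, so the $(\mu,a,b)$-graded piece of $\mathcal{Z}^G$ vanishes unless $a = \det_G(\mu)$. Reading off Poincaré series via Theorem \ref{prop:Z-P-series} then gives $m^\mu_{0,\Psi}(X,q) = m^\mu_{0,\Psi}(1,q)\,X^{\det_G(\mu)}$. Finally, specializing $X = q$ and invoking Theorem \ref{prop:qq} yields $m^\mu_{0,\Psi}(q) = m^\mu_{0,\Psi}(q,q) = m^\mu_{0,\Psi}(1,q)\,q^{\det_G(\mu)}$, as asserted.

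\textbf{Main obstacle.} There is no genuinely hard step; the only thing requiring care is the bookkeeping of the central $\Gm$-action and of the $G$-linearization of $\mathscr{L}_{G/B}(-)$. Since we only use that the resulting character is \emph{trivial} on invariants, no sign or normalization subtlety can affect the conclusion, and everything else is a direct unwinding of the gradings already set up in \S\ref{sec:variety-Y}--\S\ref{sec:certain-gKF}.
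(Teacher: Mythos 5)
Your proposal is correct and is essentially the paper's own argument: the proof there likewise writes $\mathcal{Z}$ via Theorem \ref{prop:Z-P-series} as $\bigoplus_{\mu,k}\Sym^k(V^\vee)\otimes\Gamma\bigl(G\utimes{B}\mathfrak{u},\,\kappa^*\mathscr{L}_{G/B}(\mu)\bigr)$ and observes that the central $\Gm\subset G$ acts on this piece by $z\mapsto z^{\det_G(\mu)-k}$ (trivially on $\Sym(\mathfrak{u}^\vee)$), so the trivial $G$-isotypic part forces $k=\det_G(\mu)$, after which the second assertion follows by specializing $X=q$ via Theorem \ref{prop:qq}. Your bookkeeping of the $(\mu,a,b)$-grading and of the central character matches this exactly, so there is nothing to add.
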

\begin{proof}
  We identify $\mathfrak{u}^\vee$ with the opposite nilpotent radical $\mathfrak{u}^-$ using the Killing form on $\mathfrak{g}_\text{der}$.

  Let $\mu \in X^*(T)$ and $k \in \Z_{\geq 0}$, we contend that
  $$ \Sym^k(V^\vee) \otimes \Gamma \left( G \utimes{B} \mathfrak{u}, \; \kappa^* \mathscr{L}_{G/B}(\mu) ) \right) $$
  contains the trivial representation of $G$ only if $k = \det_G(\mu)$. Indeed, the second tensor slot is
  $$ \Gamma \left( G/B, \mathscr{L}_{G/B}(\Bbbk_\mu \otimes \Sym(\mathfrak{u}^-) ) \right). $$
  One checks that every $z \in \Gm \hookrightarrow G$ acts on this representation as $\mu(z) \cdot \identity = z^ {\det_G(\mu)} \cdot \identity$, since $Z_G$ acts trivially on $\Sym(\mathfrak{u}^-)$. On the other hand, $z$ acts on $\Sym^k(V^\vee)$ as $z^{-k} \cdot \identity$. Hence $k = \det_G(\mu)$ if it contains the trivial $G$-representation.

  In view of Theorem \ref{prop:Z-P-series}, the first assertion follows immediately, and the second one follows from Theorem \ref{prop:qq}.
\end{proof}

\begin{corollary}\label{prop:rat-fun}
  The formal power series $\mathbf{P}(\mathcal{Z}^G; X, q)$ is rational of the form
  \begin{equation}\label{eqn:rat-fun}
    \frac{Q}{\prod_{i=1}^s (1 - q^{d_i} e^{\mu_i} X^{\det_G(\mu_i)})}
  \end{equation}
  for some $s$, where $d_i \geq 0$, $\mu_i \in X^*(T)_\ominus$ for all $i$, and $Q \in \Z[X^*(T), X^{\pm 1}, q^{\pm 1}]$ is a $\Z$-linear combination of monomials of the form $e^\mu X^{\det_G(\mu)} q^d$, for various $\mu$ and $d$.
\end{corollary}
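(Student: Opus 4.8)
The plan is to reduce the statement to the standard fact that the multigraded Hilbert--Poincaré series of a finitely generated module over a finitely generated $\Bbbk$-algebra graded by a positive affine monoid is rational, with the product of the generator-monomials as denominator. By Theorem~\ref{prop:Z-P-series} the algebra $\mathcal{Z}^G$ is identified, together with its grading, with the invariant ring $\Bbbk[\mathcal{Y}]^G$; since $\mathcal{Y}$ is an affine variety and $G$ is reductive over a field of characteristic zero, Hilbert's theorem shows $\mathcal{Z}^G$ is a finitely generated $\Bbbk$-algebra. Its grading by $T \times \Gm^2$ is a grading by the positive affine monoid $\mathfrak{X} := X^*(T)_\ominus \times \Z_{\geq 0}^2$, with each graded piece finite-dimensional (it lies inside the corresponding piece of $\mathcal{Z}$) and with degree-$0$ part $\Gamma(G/B, \mathcal{O}_{G/B})^G = \Bbbk$. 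Moreover, Corollary~\ref{prop:X-degree} says exactly that $(\mathcal{Z}^G)_{(\mu, c, d)} = 0$ unless $c = \det_G(\mu)$: every $\mathfrak{X}$-homogeneous element of $T$-weight $\mu$ has $X$-degree $\det_G(\mu)$.

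Next I would pick algebra generators. Since the $T \times \Gm^2$-action commutes with $G$, the multihomogeneous components of a $G$-invariant are again $G$-invariant, so decomposing a finite set of algebra generators of $\mathcal{Z}^G$ into such components yields finitely many $\mathfrak{X}$-homogeneous generators $y_1, \dots, y_s$ of multidegrees $(\mu_i, \det_G(\mu_i), d_i)$ with $\mu_i \in X^*(T)_\ominus$ (hence $\det_G(\mu_i) \geq 0$) and $d_i \geq 0$. This produces a surjection $R := \Bbbk[y_1, \dots, y_s] \twoheadrightarrow \mathcal{Z}^G$ of $\mathfrak{X}$-graded algebras, where $R$ carries the grading with $\deg y_i = (\mu_i, \det_G(\mu_i), d_i)$.

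Now apply the Hilbert syzygy theorem: $R$ is a Noetherian polynomial ring of finite global dimension, so $\mathcal{Z}^G$, as a finitely generated $\mathfrak{X}$-graded $R$-module, admits a finite resolution $0 \to F_n \to \cdots \to F_0 \to \mathcal{Z}^G \to 0$ by finitely generated graded free modules $F_j = \bigoplus_l R(-a_{jl})$ with $a_{jl} = (\nu_{jl}, \det_G(\nu_{jl}), c_{jl}) \in \mathfrak{X}$. Poincaré series are additive along such a resolution, and $\mathbf{P}(R(-a); X, q) = e^{\nu} X^{\det_G(\nu)} q^{c} \cdot \prod_{i=1}^s (1 - q^{d_i} e^{\mu_i} X^{\det_G(\mu_i)})^{-1}$ for $a = (\nu, \det_G(\nu), c)$. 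Hence
$$ \mathbf{P}(\mathcal{Z}^G; X, q) = \frac{\sum_{j} (-1)^j \sum_l e^{\nu_{jl}} X^{\det_G(\nu_{jl})} q^{c_{jl}}}{\prod_{i=1}^s (1 - q^{d_i} e^{\mu_i} X^{\det_G(\mu_i)})}, $$
which is precisely the form~\eqref{eqn:rat-fun}, with $Q := \sum_{j} (-1)^j \sum_l e^{\nu_{jl}} X^{\det_G(\nu_{jl})} q^{c_{jl}}$. One could alternatively cite rationality of Hilbert series over affine-monoid-graded rings directly, e.g.\ from \cite{BG09}.

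I do not expect a serious obstacle here; the points requiring care are administrative. One checks that $\mathfrak{X}$ is a positive affine monoid (Gordan's lemma together with the strong convexity of the cone defining $X^*(T)_\ominus$, already invoked in \S\ref{sec:variety-Y}); that the Poincaré series live in the completed monoid ring $\Z\llbracket \mathfrak{X}\rrbracket$, so the manipulations above are legitimate; and that the graded Nakayama lemma and syzygy theorem are available over a polynomial ring graded by a positive affine monoid (they are, by reduction to the ungraded statements). It is worth noting that the rational singularities of $\mathcal{Y}$ (Proposition~\ref{prop:Y-ratsing}) play no role in this qualitative result; they would only be needed for finer structural facts about $\mathcal{Z}^G$, such as Cohen--Macaulayness or a functional equation for $Q$.
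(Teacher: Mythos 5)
Your proposal is correct and in substance identical to the paper's proof: the paper likewise combines finite generation of $\mathcal{Z}^G \simeq \Bbbk[\mathcal{Y}]^G$ (citing invariant theory for reductive groups) with the multi-graded Hilbert--Serre theorem of \cite[Theorem 6.37]{BG09}, and uses Corollary \ref{prop:X-degree} to force the exponents $X^{\det_G(\mu)}$. Your free-resolution/syzygy computation merely reproves the cited Hilbert--Serre step by hand, and your remark that the rational-singularities results play no role in this qualitative statement is consistent with the paper.
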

\begin{proof}
  Note that $\mathcal{Z}^G \simeq \Bbbk[\mathcal{Y}]^G$ is finitely generated over $\Bbbk$ \cite[p.217]{Ti11}. The assertion then follows from the multi-graded Hilbert-Serre theorem \cite[Theorem 6.37]{BG09}; the term $X^{\det_G(\mu)}$ comes from Corollary \ref{prop:X-degree}.
\end{proof}

\begin{corollary}
  The $\Bbbk$-variety $\Spec(\mathcal{Z}^G)$ has rational singularities. In particular, $\mathcal{Z}^G$ is Cohen-Macaulay.
\end{corollary}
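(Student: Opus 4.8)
\begin{proof}
The plan is to reduce the statement to Boutot's theorem on rational singularities of quotients by reductive groups. First, Theorem \ref{prop:Z-P-series} furnishes a $G$-equivariant isomorphism of $\Bbbk$-algebras $\mathcal{Z} \rightiso \Bbbk[\mathcal{Y}]$; passing to $G$-invariants gives $\mathcal{Z}^G \simeq \Bbbk[\mathcal{Y}]^G$, so that $\Spec(\mathcal{Z}^G)$ is identified with the categorical quotient $\Spec(\Bbbk[\mathcal{Y}]^G) = \mathcal{Y} /\!\!/ G$. Recall that $\mathcal{Y}$ is an affine $\Bbbk$-variety, being the normalization of the affine variety $\mathcal{Y}_0$, and that it has rational singularities by Proposition \ref{prop:Y-ratsing}; moreover $\mathcal{Z}^G$ is finitely generated over $\Bbbk$ since $G$ is reductive (as already used in the proof of Corollary \ref{prop:rat-fun}), so $\Spec(\mathcal{Z}^G)$ is a genuine affine $\Bbbk$-variety.

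Since $\Bbbk$ has characteristic zero and $G$ is reductive, the Reynolds operator exhibits $\Bbbk[\mathcal{Y}]^G$ as a direct summand of $\Bbbk[\mathcal{Y}]$ as a $\Bbbk[\mathcal{Y}]^G$-module. Boutot's theorem (\cite{Boutot87}: the invariant ring under a reductive group acting on an algebra with rational singularities again has rational singularities) therefore applies and shows that $\Spec(\mathcal{Z}^G) = \mathcal{Y} /\!\!/ G$ has rational singularities. Finally, varieties with rational singularities are normal and Cohen-Macaulay, so $\mathcal{Z}^G$ is Cohen-Macaulay.
\end{proof}

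There is essentially no difficulty here beyond checking the hypotheses of Boutot's theorem (characteristic zero, $G$ reductive, $\mathcal{Y}$ affine with rational singularities), all of which are in place. One could alternatively bypass the citation of Boutot by exploiting the identity $R\pi_* \mathcal{O}_{\tilde{\mathcal{Y}}} = \mathcal{O}_{\mathcal{Y}}$ of Lemma \ref{prop:resolution-Y} together with the rational singularities of $\tilde{\mathcal{Y}}$ and a $G$-equivariant resolution, but propagating this through the invariant subring still relies on the same splitting argument, so invoking Boutot's theorem is the cleanest route.
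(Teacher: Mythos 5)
Your proposal is correct and follows essentially the same route as the paper: Proposition \ref{prop:Y-ratsing} gives rational singularities of the affine variety $\mathcal{Y}$, and Boutot's theorem (the paper's reference \cite{Bou87}) transfers this to the categorical quotient $\mathcal{Y}/\!/G \simeq \Spec(\mathcal{Z}^G)$, with Cohen-Macaulayness following as a standard consequence. The extra remarks on the Reynolds operator and finite generation are fine but not needed beyond what the paper already records.
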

\begin{proof}
  By Proposition \ref{prop:Y-ratsing}, the affine variety $\mathcal{Y}$ has rational singularities. From \cite{Bou87} we conclude that the same holds for $\Spec(\mathcal{Z}^G) \simeq \mathcal{Y}/\!/G$ (the categorical quotient \cite[II. \S 4.3]{AGIV}).
\end{proof}

Denote by $\omega$ the graded dualizing module of $\mathcal{Z}^G$. By a celebrated theorem of Stanley \cite[Theorem 6.41]{BG09}, there is a simple equation relating the rational functions $\mathbf{P}(\mathcal{Z}^G; X, q)$ and $\mathbf{P}(\omega; X, q)$. A detailed analysis of $\omega$ will be needed in order to elucidate this functional equation.

\subsection{Coefficients of the basic function}\label{sec:coeff-basicfun}
In this subsection we revert to the setup in \S\ref{sec:basicfun}: $F$ is a nonarchimedean local field and we have a short exact sequence of split connected reductive $F$-groups
$$ 1 \to G_0 \to G \xrightarrow{\det_G} \Gm \to 1 $$
in which $G_0$ is semisimple. Fix a Borel pair $(B,T)$ for $G$ defined over $F$, as well as a hyperspecial subgroup $K \subset G(F)$.

On the dual side, these data give rise to the groups over $\Bbbk = \C$
$$ 1 \to \Gm \to \hat{G} \to \widehat{G}_0 \to 1, $$
and $\hat{G}$ is endowed with the dual Borel pair $(\hat{B}, \hat{T})$. We have a finite-dimensional representation $(\rho, V)$ of $\hat{G}$ such that $\rho(z) = z\cdot\identity$ for all $z \in \Gm$. There is also the surjective homomorphism
$$ \det_G: X_*(T) = X^*(\hat{T}) \longrightarrow X^*(\Gm) = X_*(\Gm) = \Z. $$

The formalism in \S\ref{sec:certain-gKF} applies to $\hat{G}$, etc. We define the corresponding objects $N = V \oplus \mathfrak{u}$, $\Psi := \Supp(N)$ and $m^\mu_{\lambda, \Psi}(q)$ (Definition \ref{def:gKF}) for $\lambda \in X_*(T)_-$, $\mu \in X_*(T)$, etc. Also, $X_*(T)_\ominus$ is the submonoid of $X_*(T)_-$ defined by $\det_G \geq 0$.

On the other hand, in Definition \ref{def:basic} we defined the basic function $f_{\rho, X}$ with the indeterminate $X$. Crucial in that definition are the coefficients $c_\mu(q_F)$ indexed by $\mu \in X_*(T)_-$, where $c_\mu(q) \in \Z[q^{-1}]$. According to \eqref{eqn:c_mu}, $c_\mu \neq 0$ only if $\mu \in X_*(T)_\ominus$.

\begin{theorem}\label{prop:basicfun-gen}
  For all $\mu \in X_*(T)_\ominus$ we have
  $$ c_\mu(q^{-1}) = m^\mu_{0, \Psi}(q) q^{-\det_G(\mu)}. $$
  Equivalently, the equalities below hold in $\Z[X_*(T)_\ominus][q,X]$:
  \begin{align*}
    \sum_{\mu \in X_*(T)_\ominus} c_\mu(q^{-1}) e^\mu X^{\det_G(\mu)} & = \sum_{\mu \in X_*(T)_\ominus} m^\mu_{0, \Psi}(X,q) e^\mu \\
    & = \sum_{\mu \in X_*(T)_\ominus} m^\mu_{0, \Psi}(q) q^{-\det_G(\mu)} X^{\det_G(\mu)} e^\mu.
  \end{align*}
  In fact, they equal the Poincaré series $\mathbf{P}(\mathcal{Z}^{\hat{G}}; X,q)$ where $\mathcal{Z}$ is defined in \eqref{eqn:Z}.
\end{theorem}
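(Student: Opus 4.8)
The strategy is to compute the multi-graded Poincaré series $\mathbf{P}(\mathcal{Z}^{\hat{G}}; X, q)$ in two different ways and compare. On one side, Theorem \ref{prop:Z-P-series} already gives $\mathbf{P}(\mathcal{Z}^{\hat{G}}; X, q) = \sum_{\mu \in X_*(T)_\ominus} m^\mu_{0, \Psi}(X, q) e^\mu$, and Corollary \ref{prop:X-degree} rewrites each coefficient as $m^\mu_{0, \Psi}(X, q) = m^\mu_{0, \Psi}(1, q) X^{\det_G(\mu)}$ together with $m^\mu_{0, \Psi}(q) q^{-\det_G(\mu)} = m^\mu_{0, \Psi}(1, q)$. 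These two facts already identify the second and third displayed series in the statement with $\mathbf{P}(\mathcal{Z}^{\hat{G}}; X, q)$, so the whole theorem reduces to the single identity $c_\mu(q^{-1}) = m^\mu_{0, \Psi}(1, q)$ for every $\mu \in X_*(T)_\ominus$; equivalently, the coefficient of $e^\mu X^{\det_G(\mu)}$ in $\mathbf{P}(\mathcal{Z}^{\hat{G}}; X, q)$ must equal $c_\mu(q^{-1})$.

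On the other side, I would compute that coefficient directly from the definition \eqref{eqn:Z} of $\mathcal{Z}$. Write $\mathcal{Z} = \Sym(V^\vee) \otimes \mathcal{W}$ with $\mathcal{W}_\mu := \Gamma(\hat{G} \utimes{\hat{B}} \hat{\mathfrak{u}},\, \kappa^* \mathscr{L}_{\hat{G}/\hat{B}}(\mu))$, so that the $\mu$-graded part of $\mathcal{Z}^{\hat{G}}$ is $\bigoplus_{k \geq 0} \big(\Sym^k(V^\vee) \otimes \mathcal{W}_\mu\big)^{\hat{G}}$. Using the standard identity $\big(\Sym^k(V^\vee) \otimes \mathcal{W}_\mu\big)^{\hat{G}} = \Hom_{\hat{G}}(\Sym^k V, \mathcal{W}_\mu)$ — the passage to the contragredient of $\Sym^k(V^\vee)$ being exactly what makes $\Sym^k\rho$, rather than its dual, appear — and decomposing $\Sym^k V$ into $\hat{G}$-irreducibles, this space becomes
$$ \bigoplus_{k \geq 0}\ \bigoplus_{\lambda \in X_*(T)_-} \mult(\Sym^k \rho : V(\lambda)) \cdot \Hom_{\hat{G}}\big(V(\lambda), \mathcal{W}_\mu\big). $$
By Corollary \ref{prop:K-as-P-series}, applied with $\lambda \in X_*(T)_-$ and each $\mu \in X_*(T)_\ominus \subseteq X_*(T)_-$, the graded vector space $\Hom_{\hat{G}}(V(\lambda), \mathcal{W}_\mu)$ has $q$-Poincaré series $K_{\lambda, \mu}(q)$ (the $\Gm$ here dilating the $\hat{\mathfrak{u}}$-fibres, which is the $q$-variable), and it vanishes unless $\mu \leq \lambda$ relative to $\hat{B}^-$. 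Finally, arguing as in the proof of Corollary \ref{prop:X-degree} — $z \in \Gm \hookrightarrow \hat{G}$ acts on $\Sym^k(V^\vee)$ by $z^{-k}$ and on $\mathcal{W}_\mu$ by $z^{\det_G(\mu)}$, since $Z_{\hat{G}}$ acts trivially on $\Sym(\hat{\mathfrak{u}}^-)$ — only the summand $k = \det_G(\mu)$ survives, and it carries $X$-degree $k = \det_G(\mu)$. Hence the coefficient of $e^\mu X^{\det_G(\mu)}$ in $\mathbf{P}(\mathcal{Z}^{\hat{G}}; X, q)$ is
$$ \sum_{\substack{\lambda \in X_*(T)_- \\ \lambda \geq \mu}} \mult\big(\Sym^{\det_G(\mu)} \rho : V(\lambda)\big)\, K_{\lambda, \mu}(q), $$
which, upon comparison with \eqref{eqn:c_mu} (and recalling $\det_G(\mu) \geq 0$ for $\mu \in X_*(T)_\ominus$, and that $c_\mu(q^{-1})$ simply replaces $K_{\lambda,\mu}(q^{-1})$ by $K_{\lambda,\mu}(q)$), is exactly $c_\mu(q^{-1})$. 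This finishes the required identification, and with the first paragraph, the theorem.

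The only place where genuine care is needed — and thus the ``hard part'' — is bookkeeping: matching the three gradings (the $\hat{T}$-grading producing $e^\mu$, and the two $\Gm$-factors producing $X$ and $q$) consistently through the isomorphism $\mathcal{Z} \simeq \Bbbk[\mathcal{Y}]$ of Theorem \ref{prop:Z-P-series}, the Künneth splitting $\tilde{\mathcal{Y}} \simeq V \times \big(\hat{G} \utimes{\hat{B}} (\Pi \times \hat{\mathfrak{u}})\big)$ underlying it, and the contragredient identification $(\Sym^k V^\vee)^\vee \simeq \Sym^k V$. Once these conventions are fixed the computation is forced, and no further geometric or analytic input is required beyond what is established in \S\ref{sec:variety-Y}--\S\ref{sec:certain-gKF}.
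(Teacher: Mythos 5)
Your proposal is correct and is essentially the paper's own argument run in the opposite direction: the same key inputs (Corollary \ref{prop:K-as-P-series} for $K_{\lambda,\mu}(q)$ as a Poincaré series, the decomposition of $\Sym^{\det_G(\mu)}\rho$ into the $V(\lambda)$, the central-character bookkeeping of Corollary \ref{prop:X-degree}, and Theorem \ref{prop:Z-P-series}) are combined in the same way, except that you expand $\mathbf{P}(\mathcal{Z}^{\hat{G}};X,q)$ down to the expression \eqref{eqn:c_mu} for $c_\mu(q^{-1})$, whereas the paper assembles $\sum_\mu c_\mu(q^{-1})e^\mu X^{\det_G(\mu)}$ up to that Poincaré series. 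No gap; the grading identifications you flag as the delicate point are exactly the ones the paper also relies on.
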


Substituting
\begin{align*}
  q & \longmapsto q_F^{-1}, \\
  e^\mu & \longmapsto q_F^{-\angles{\rho_{B^-}, \mu}} \mathbbm{1}_{K\mu(\varpi)K}
\end{align*}
in these Poincaré series yields the basic function $f_{\rho, X}: K \backslash G(F) / K \to \Z[q_F^{\pm 1/2}, X]$.

\begin{proof}
  We use \eqref{eqn:c_mu} to write $\sum_\mu c_\mu(q^{-1}) e^\mu X^{\det_G(\mu)}$ as
  \begin{gather*}
    \sum_{k \geq 0} \; \sum_{\substack{\lambda \in X_*(T)_- \\ \det_G(\lambda)=k}} \; \sum_{\substack{\mu \in X_*(T)_\ominus \\ \mu \leq \lambda}} K_{\lambda, \mu}(q) \text{mult}(\Sym^k(\rho) : V(\lambda)) e^\mu X^k.
  \end{gather*}

  The $\leq$ above is the Bruhat order relative to $B^-$. The following manipulations are justified since they are ``finitary'' for a fixed $k$. Apply Corollary \ref{prop:K-as-P-series} to turn the sum into
  \begin{gather*}
    \sum_{k \geq 0} \; \sum_{\substack{\mu \leq \lambda \\ \det_G(\mu) = k}}  \mathbf{P}( \mathcal{Z}_{\lambda, \mu} ; q) \text{mult}(\Sym^k(\rho) : V(\lambda)) e^\mu X^k
  \end{gather*}
  where we define the homogeneous fibration $\kappa: \hat{G} \utimes{\hat{B}} \hat{\mathfrak{u}} \to \hat{G}/\hat{B}$ and the $\Z_{\geq 0}$-graded $\Bbbk$-vector space
  $$ \mathcal{Z}_{\lambda, \mu} := \Hom_{\hat{G}} \left( V(\lambda), \Gamma\left( \hat{G} \utimes{\hat{B}} \mathfrak{\hat{u}}, \; \kappa^* \mathscr{L}_{\hat{G}/\hat{B}}(\mu) \right) \right). $$

  Observe that the condition $\det_G(\mu)=k$ can be removed, since $\mu \leq \lambda$ implies $\det_G(\lambda) = \det_G(\mu)$, and $\text{mult}(\Sym^k(\rho) : V(\lambda)) \neq 0$ implies $\det_G(\lambda)=k$. It has been observed in Example \ref{ex:q-analogue} that $\mu \leq \lambda$ can also be removed, since otherwise we get $\mathbf{P}( \mathcal{Z}_{\lambda, \mu} ; q) = K_{\lambda, \mu}(q) = 0$. The resulting expression is
  $$ \sum_{k \geq 0} \; \sum_{\lambda \in X_*(T)_-} \mathbf{P} \left( \bigoplus_{\mu \in X_*(T)_\ominus} \mathcal{Z}_{\lambda, \mu} ; q \right) \text{mult}(\Sym^k(\rho) : V(\lambda)) X^k , $$
  where $\bigoplus_\mu \mathcal{Z}_{\lambda, \mu}$ is naturally $X_*(T)_\ominus \times \Z_{\geq 0}$-graded. The inner sum over $\lambda$ yields the Poincaré series of
  $$ \Hom_{\hat{G}} \left( \Sym^k(\rho), \Gamma\left( \hat{G} \utimes{\hat{B}} \mathfrak{\hat{u}}, \; \kappa^* \mathscr{L}_{\hat{G}/\hat{B}}(\mu) \right) \right), $$
  equivalently,
  $$ \left( \Sym^k(V^\vee) \otimes \Gamma\left( \hat{G} \utimes{\hat{B}} \mathfrak{\hat{u}}, \; \kappa^* \mathscr{L}_{\hat{G}/\hat{B}}(\mu) \right) \right)^{\hat{G}}. $$

  Taking the sum over $k \geq 0$, we arrive at the Poincaré series $\mathbf{P}(\mathcal{Z}^{\hat{G}}; X, q)$. On the other hand, Theorem \ref{prop:Z-P-series} asserts that $\mathbf{P}(\mathcal{Z}^{\hat{G}}; X, q) =  \sum_{\mu \in X_*(T)_\ominus} m^\mu_{0, \Psi}(X,q) e^\mu$.

  The last equality follows by Corollary \ref{prop:X-degree}.
\end{proof}

\begin{corollary}\label{prop:c_mu-rat-fun}
  The expression $\sum_{\mu \in X_*(T)_\ominus} c_\mu(q^{-1}) e^\mu X^{\det_G(\mu)}$ in Theorem \ref{prop:basicfun-gen} is rational of the form \eqref{eqn:rat-fun}, with $X_*(T)_{\cdots}$ in place of $X^*(T)_{\cdots}$.
\end{corollary}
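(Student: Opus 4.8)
The plan is to read the statement off directly from two results already in hand: Theorem \ref{prop:basicfun-gen}, which identifies the generating series in question with an invariant-theoretic Poincaré series, and Corollary \ref{prop:rat-fun}, which establishes the rationality of such series. So the proof will essentially be a juxtaposition of the two, modulo a change of notation.

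Concretely, I would first invoke the last assertion of Theorem \ref{prop:basicfun-gen} to write
$$ \sum_{\mu \in X_*(T)_\ominus} c_\mu(q^{-1}) e^\mu X^{\det_G(\mu)} = \mathbf{P}(\mathcal{Z}^{\hat{G}}; X, q), $$
where $\mathcal{Z}$ is the $\C$-algebra \eqref{eqn:Z} built from $N = V \oplus \hat{\mathfrak{u}}$ and the monoid $X_*(T)_\ominus$, all attached to the dual group $\hat{G}$ as in \S\ref{sec:coeff-basicfun}. It then suffices to show that $\mathbf{P}(\mathcal{Z}^{\hat{G}}; X, q)$ has the form \eqref{eqn:rat-fun}. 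For this I would apply Corollary \ref{prop:rat-fun} with the ambient reductive group taken to be $\hat{G}$: the formalism of \S\S\ref{sec:gKF}--\ref{sec:certain-gKF} was developed for an arbitrary connected reductive group over an algebraically closed field of characteristic zero sitting in a short exact sequence $1 \to \Gm \to (\cdot) \to (\text{semisimple}) \to 1$, and in \S\ref{sec:coeff-basicfun} that group is precisely $\hat{G}$, equipped with $(\hat{B}, \hat{T})$ and $1 \to \Gm \to \hat{G} \to \widehat{G}_0 \to 1$. Under the identification $X_*(T) = X^*(\hat{T})$ the monoid $X_*(T)_\ominus$ plays the role of $X^*(T)_\ominus$ from Definition \ref{def:X-ominus}, and Hypothesis \ref{hyp:N} has already been verified for this data in \S\ref{sec:certain-gKF} (the Lemma preceding Remark \ref{rem:smaller-monoid}). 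Hence Corollary \ref{prop:rat-fun} applies verbatim and yields
$$ \mathbf{P}(\mathcal{Z}^{\hat{G}}; X, q) = \frac{Q}{\prod_{i=1}^s (1 - q^{d_i} e^{\mu_i} X^{\det_G(\mu_i)})} $$
with $d_i \geq 0$, $\mu_i \in X_*(T)_\ominus$ and $Q$ a $\Z$-linear combination of monomials $e^\mu X^{\det_G(\mu)} q^d$ --- which is exactly \eqref{eqn:rat-fun} with $X_*(T)$ in place of $X^*(T)$.

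There is no genuine obstacle here; the mathematical content lives in Theorem \ref{prop:basicfun-gen} and Corollary \ref{prop:rat-fun} (which in turn rest on the finite generation of $\mathcal{Y}/\!/\hat{G}$ and the multigraded Hilbert--Serre theorem). The only thing demanding care is the bookkeeping: keeping straight the dictionary between the ``ambient group'' of \S\ref{sec:gKF} and the dual group $\hat{G}$, i.e.\ the systematic swap $X^*(T) \leftrightarrow X^*(\hat{T}) = X_*(T)$ and $\Sigma_B \leftrightarrow \Sigma_{\hat{B}}$, so that the monoid $X_*(T)_\ominus$ and the set $\Psi = \Supp(V) \sqcup \Sigma_{\hat{B}}$ are the ones that actually appear in the denominator and numerator of \eqref{eqn:rat-fun}. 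All of this has already been made explicit in \S\ref{sec:coeff-basicfun}, so the write-up should be brief.
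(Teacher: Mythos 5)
Your proposal is correct and is essentially the paper's own argument: the paper also deduces the corollary directly by combining Theorem \ref{prop:basicfun-gen} (identifying the series with $\mathbf{P}(\mathcal{Z}^{\hat{G}}; X, q)$) with Corollary \ref{prop:rat-fun}, whose hypotheses were already verified for $\hat{G}$, $N = V \oplus \hat{\mathfrak{u}}$ and $X_*(T)_\ominus$ in \S\ref{sec:certain-gKF}. Your extra care with the dictionary $X^*(T) \leftrightarrow X_*(T)$ is exactly the notational point the paper leaves implicit.
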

\begin{proof}
  Immediate from Corollary \ref{prop:rat-fun}.
\end{proof}

\begin{remark}
  The indeterminate $X$ is somehow redundant. The result above can be written as
  $$ \sum_{\mu \in X_*(T)_\ominus} c_\mu(q^{-1}) e^\mu = \sum_{\mu \in X_*(T)_\ominus} m^\mu_{0, \Psi}(q) q^{-\det_G(\mu)} e^\mu $$
 in $\Z \llbracket X_*(T)_\ominus \rrbracket [q]$.
\end{remark}

\section{Examples}\label{sec:examples}
\subsection{The standard $L$-factor of Tamagawa-Godement-Jacquet}\label{sec:GJ}
Fix $n \in \Z_{\geq 1}$. In Ngô's recipe \S\ref{sec:Ngo}, take
\begin{itemize}
  \item $G_0 := \SL(n)$, viewed as a subgroup of $G := \GL(n)$;
  \item $(B, T)$: the standard Borel pair of $\GL(n)$, namely $B$ (resp. $T$) is the subgroup of upper triangular (resp. diagonal) matrices;
  \item $(B_0, T_0) := (B \cap G_0, T \cap G_0)$, which is a Borel pair of $G_0$;
  \item $\varepsilon_1, \ldots, \varepsilon_n$: the standard basis of $X_*(T)$, namely $\varepsilon_i: \Gm \to T = \Gm^n$ is $\identity$ at the $i$-th slot of $\Gm^n$, and trivial elsewhere;
  \item $\bar{\xi} \in X_*(T_{0,\text{ad}})$ is the cocharacter obtained by composing $\Gm \xrightarrow{\varepsilon_n} T \twoheadrightarrow T/Z_G = T_{0,\text{ad}}$.
\end{itemize}

Choose the usual $\mathfrak{o}_F$-model of $G$. The hyperspecial subgroup $K$ of $G(F)$ is $\GL(n,\mathfrak{o}_F)$. We have $\widehat{G_0} = \PGL(n,\C)$, $\widehat{G_0}_{,SC} = \SL(n,\C)$ and $\hat{G} = \GL(n,\C)$. To $\bar{\xi}$ is associated the standard representation $\rho_{\bar{\xi}}: \SL(n,\C) \hookrightarrow \GL(n,\C)$ for $\widehat{G_0}_{,SC}$, which yields the tautological projective representation $\bar{\rho}: \PGL(n,\C) \xrightarrow{\identity} \PGL(n,\C)$ for $\widehat{G_0}$. Now gaze at the diagram \eqref{eqn:lifting}: the lifted representation $\rho$ is nothing but the standard representation on $\C^n$
$$ \text{Std}: \hat{G} = \GL(n,\C) \xrightarrow{\identity} \GL(n,\C), $$
for tautological reasons. Moreover, its highest weight relative to $B^-$ is the cocharacter
$$ \xi = \varepsilon_n: \Gm \to T . $$

In this case, $\det_G$ is simply the determinant $\det: \GL(n) \to \Gm$. It induces the homomorphism $\det: X_*(T) \to X_*(\Gm)=\Z$ sending $\sum_i a_i \varepsilon_i$ to $\sum_i a_i$.

Thus we recover the setting of Tamagawa \cite{Ta63} and Godement-Jacquet \cite{GJ72}. Their calculations in the unramified setting can actually be deduced from our formalism in \S\ref{sec:L-basic}, as explained below.

The weights of $\text{Std}$ are $\varepsilon_1, \ldots, \varepsilon_n$, each with multiplicity one. We take the corresponding weight vectors $v_1, \ldots, v_n$ to be the standard basis of $\C^n$. Let $X_*(T)_\ominus = X^*(\hat{T})_\ominus$ be as in Definition \ref{def:X-ominus}.

The following criterion is suggested by Casselman.
\begin{proposition}\label{prop:Sym-irred}
  Suppose temporarily that $\hat{G}$ is any connected reductive $\C$-group and $\hat{B}$ is a Borel subgroup with unipotent radical $\hat{U}$. Let $(\rho, V)$ be an irreducible representation of $\hat{G}$ of highest weight $\varepsilon$ relative to $\hat{B}^-$. Let $v_- \neq 0$ be a highest vector in $V$ of weight $\varepsilon$ relative to $\hat{B}^-$. Then $\Sym^k(V)$ is irreducible for all $k \geq 0$ (with highest weight $k\varepsilon$) if and only if $\hat{U} \cdot (\C v_-)$ is dense in $V$.
\end{proposition}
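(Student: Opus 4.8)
The plan is to reformulate ``$\Sym^k(V)$ irreducible for all $k$'' as the single condition that the Zariski closure of the highest weight orbit cone is all of $V$, and then chase the equivalence through. Throughout, let $\widehat Y := \overline{\hat G\cdot(\C v_-)}\subseteq V$, a closed cone (it is stable under dilation). The first, easy reduction is that $\hat U\cdot(\C v_-)$ is dense in $V$ if and only if $\hat G\cdot(\C v_-)$ is: since $v_-$ is a highest weight vector for $\hat B^-$, the line $\C v_-$ is $\hat B^-$-stable, and $\hat U\hat B^-$ is the big cell, dense open in $\hat G$; hence $\hat U\cdot(\C v_-)=\hat U\hat B^-\cdot(\C v_-)$ has closure $\widehat Y$. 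So it suffices to show that $\Sym^k(V)$ is irreducible for all $k$ if and only if $\widehat Y=V$, equivalently if and only if the homogeneous ideal $I(\widehat Y)\subseteq\C[V]=\Sym(V^\vee)$ vanishes.

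Next I would record the standard fact that $V(k\varepsilon)$ sits inside $\Sym^k(V)$ with multiplicity one. All $\hat B^-$-weights of $\Sym^k(V)$ are $\le k\varepsilon$, and the $k\varepsilon$-weight space is one-dimensional, spanned by $v_-^{\cdot k}$; hence the $\hat G$-submodule generated by the cyclic vector $v_-^{\cdot k}$ is a copy of $V(k\varepsilon)$, and $\Sym^k(V)$ is irreducible precisely when this inclusion is onto. Dualizing, $\Sym^k(V)$ is irreducible iff the canonical surjection $p_k\colon\Sym^k(V^\vee)=(\Sym^k V)^\vee\twoheadrightarrow V(k\varepsilon)^\vee$ is an isomorphism; in particular, when irreducible, $\Sym^k(V)\cong V(k\varepsilon)$ has highest weight $k\varepsilon$, as claimed.

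The heart of the matter is the identification $\ker p_k=I(\widehat Y)_k$ for every $k$. Identify $f\in\Sym^k(V^\vee)$ with the homogeneous degree-$k$ polynomial $u\mapsto\langle f,u^{\cdot k}\rangle$ on $V$ (using the natural perfect pairing in characteristic zero). Then $f\in\ker p_k$ iff $f$ annihilates the submodule $V(k\varepsilon)\subseteq\Sym^k(V)$; since the $\hat G$-translates $\Sym^k\!\rho(g)\cdot v_-^{\cdot k}$ span $V(k\varepsilon)$, this is equivalent to $\langle f,\Sym^k\!\rho(g)\,v_-^{\cdot k}\rangle=f(\rho(g)v_-)=0$ for every $g\in\hat G$, hence by homogeneity of $f$ to the vanishing of $f$ on $\hat G\cdot(\C v_-)$, i.e.\ on $\widehat Y$. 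Chaining the equivalences: $\Sym^k(V)$ is irreducible for all $k$ $\iff$ every $p_k$ is bijective $\iff$ $I(\widehat Y)_k=0$ for all $k$ $\iff$ $I(\widehat Y)=0$ $\iff$ $\widehat Y=V$ $\iff$ $\hat U\cdot(\C v_-)$ is dense in $V$.

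The only step that needs genuine care is this identification: correctly setting up the pairing between $(\Sym^k V)^\vee$ and degree-$k$ polynomials, keeping the $\hat G$-action straight, and using cyclicity of $v_-^{\cdot k}$ in the \emph{irreducible} module $V(k\varepsilon)$ to pass between ``vanishing on $V(k\varepsilon)$'' and ``vanishing on the $\hat G$-orbit of $v_-$''. Everything else is formal; notably the argument uses only elementary highest weight theory and density of the big cell, with no appeal to Borel--Weil--Bott or to projective normality of the flag variety $\hat G/\hat P$.
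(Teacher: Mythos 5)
Your proof is correct. It takes a route dual to the paper's. The paper first converts density of $\hat{U}\cdot(\C v_-)$ in $V$ into density of $\hat{U}^-\cdot(\C \check{v}_+)$ in $V^\vee$, using that the stabilizers of the two highest weight lines $[v_-]$ and $[\check{v}_+]$ are opposite parabolic subgroups (citing \cite{BLV86}), and then characterizes irreducibility of all $\Sym^k(V)$ by the single condition $\C[V^\vee]^{\hat{U}^-}=\C[v_-]$, the ``only if'' direction resting on the identification $\C[Z]^{\hat{U}^-}=\C[V^\vee]^{\hat{U}^-}/I^{\hat{U}^-}$ for $Z=\overline{\hat{U}^-\cdot(\C\check{v}_+)}$ with $\hat{G}$-stable ideal $I$ (\cite[Lemma D.1]{Ti11}). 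You instead stay on $V$: you exploit the multiplicity-one Cartan component $V(k\varepsilon)\subseteq\Sym^k(V)$ generated by $v_-^{\cdot k}$ and identify $\ker\bigl(\Sym^k(V^\vee)\twoheadrightarrow V(k\varepsilon)^\vee\bigr)$ with the degree-$k$ part of the ideal of the closed cone $\overline{\hat{G}\cdot(\C v_-)}$, so both implications drop out at once from ``the ideal vanishes iff the cone is all of $V$''; and your preliminary reduction from $\hat{U}$ to $\hat{G}$ via density of the big cell $\hat{U}\hat{B}^-$ replaces the paper's appeal to closedness of the projective orbit $\hat{G}\cdot[v_-]$. What each buys: your argument is more self-contained and symmetric (no opposite-parabolic fact, no lemma on $U$-invariants of quotients, no passage to $V^\vee$ for the density condition), while the paper's formulation in terms of $\hat{U}^-$-invariants of $\C[V^\vee]$ fits the invariant-theoretic (covariants) viewpoint used elsewhere in the article; at bottom both proofs express the same fact, namely that $\bigoplus_{k\geq 0} V(k\varepsilon)^\vee$ is the coordinate ring of the closure of the highest-weight orbit cone.
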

\begin{proof}
  Write $\mathbb{P}(V)$ for the projective space and $[v_-] \in \mathbb{P}(V)$ for the line containing $v_-$. Notice that $\hat{G}  \cdot [v_-]$ is always closed. The density of $\hat{U} \cdot (\C v_-)$ is equivalent to
  $$ \hat{G}  \cdot [v_-] = \mathbb{P}(V). $$

  Let $\check{v}_+$ be the highest vector of $V^\vee$ relative to $\hat{B}$ such that $\angles{\check{v}_+, v_-}=1$, of weight $-\varepsilon$. It is known that $\Stab_{\hat{G}} [v_-]$ and $\Stab_{\hat{G}} [\check{v}_+]$ are opposite parabolic subgroups (see \cite[1.1]{BLV86}). Hence the density condition is equivalent to $\hat{G}  \cdot [\check{v}_+] = \mathbb{P}(V^\vee)$, or to the density of $\hat{U}^- \cdot (\C \check{v}_+)$ in $V^\vee$.

  To decompose $\Sym(V)$ it suffices to describe $\Sym(V)^{\hat{U}^-} = \C[V^\vee]^{\hat{U}^-}$.

  For the ``if'' part, we shall prove that $\C[V^\vee]^{\hat{U}^-} = \C[v_-]$ (the polynomial algebra generated by $v_-$). The inclusion $\supset$ is evident. As for $\subset$, the discussion above implies that every $\hat{U}^-$-invariant regular function $f$ on $V^\vee$ is a polynomial in $v_-$,  thereby establishing our claim for all $\Sym^k(V)$.

  For the ``only if'' part, let $Z$ be the Zariski closure of $\hat{U}^- \cdot (\C \check{v}_+)$ in $V^\vee$: it is defined by a $\hat{G}$-stable ideal $I$.  We have
  $$ \C[V^\vee]^{\hat{U}^-}/I^{\hat{U}^-} = \C[Z]^{\hat{U}^-} = \C[v_-] = \C[V^\vee]^{\hat{U}^-} $$
  where the first equality stems from \cite[Lemma D.1]{Ti11} and the second is the restriction to $\C \check{v}_+$. Therefore $I^{\hat{U}^-}=\{0\}$, so $I=\{0\}$ by highest weight theory. Hence $Z=V^\vee$.
\end{proof}

\begin{lemma}\label{prop:c_mu-Std}
  For $\mu = \sum_{i=1}^n a_i \varepsilon_i \in X_*(T)_\ominus$ with $k := \det(\mu)$, the coefficient $c_\mu(q)$ defined in \eqref{eqn:c_mu} satisfies
  $$ c_\mu(q^{-1})
    = \begin{cases}
    q^{k \cdot \frac{n-1}{2} - \angles{\rho_{B^-}, \mu}}, & \text{if } a_1, \ldots, a_n \geq 0, \\
    0, & \text{otherwise}.
  \end{cases} $$
\end{lemma}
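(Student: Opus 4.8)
The plan is to reduce the defining sum \eqref{eqn:c_mu} to a single Kostka--Foulkes polynomial and then evaluate that polynomial explicitly.

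First I would apply Proposition~\ref{prop:Sym-irred} to show that $\Sym^k(\mathrm{Std})$ is irreducible for every $k\ge 0$. The $\hat B^-$-highest vector of $\mathrm{Std}$ is $v_n$, of weight $\varepsilon_n$; since the unipotent radical $\hat U$ of $\hat B$ consists of upper-triangular unipotent matrices, the orbit $\hat U\cdot(\C v_n)$ contains every vector of $\C^n$ with nonzero last coordinate, hence is dense, and the criterion of Proposition~\ref{prop:Sym-irred} applies. The highest weight of $\Sym^k(\mathrm{Std})$ relative to $\hat B^-$ is then $k\varepsilon_n$, which lies in $X_*(T)_-$ because $k:=\det(\mu)\ge 0$ for $\mu\in X_*(T)_\ominus$. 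Consequently $\mult(\Sym^k\rho:V(\lambda))=\delta_{\lambda,k\varepsilon_n}$, and \eqref{eqn:c_mu} collapses to $c_\mu(q)=K_{k\varepsilon_n,\mu}(q^{-1})$ when $k\varepsilon_n\ge\mu$ (Bruhat order relative to $B^-$) and $c_\mu(q)=0$ otherwise.

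Next I would pin down when $k\varepsilon_n\ge\mu$ holds. Since the simple coroots relative to $B^-$ are $\varepsilon_{i+1}-\varepsilon_i$, writing $k\varepsilon_n-\mu=\sum_{i=1}^{n-1}n_i(\varepsilon_{i+1}-\varepsilon_i)$ forces $n_i=a_1+\dots+a_i$; so $k\varepsilon_n\ge\mu$ is equivalent to $a_1+\dots+a_i\ge 0$ for all $i$, and because $\mu$ is anti-dominant ($a_1\le\dots\le a_n$) this is in turn equivalent to $a_1\ge 0$, i.e.\ to $a_1,\dots,a_n\ge 0$. This settles the vanishing branch of the formula.

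Finally, assume $a_1,\dots,a_n\ge 0$, so that $\mu$ occurs in $\Sym^k(\C^n)=V(k\varepsilon_n)$ with multiplicity one. By Example~\ref{ex:q-analogue}(4), $K_{k\varepsilon_n,\mu}(1)=\dim V(k\varepsilon_n)_\mu=1$, and combined with the non-negativity $K_{k\varepsilon_n,\mu}(q)\in\Z_{\ge 0}[q]$ this forces $K_{k\varepsilon_n,\mu}(q)$ to be a single monomial $q^{d}$ with $d=\deg_q K_{k\varepsilon_n,\mu}$. By Example~\ref{ex:q-analogue}(3), $d=\langle k\varepsilon_n-\mu,\check\rho_{B^-}\rangle$; identifying the $\check\rho_{B^-}$ for $\hat G$ with $\rho_{B^-}$ and computing this pairing in the $\varepsilon$-coordinates (using $\langle\varepsilon_n,\rho_{B^-}\rangle=\tfrac{n-1}{2}$) gives $d=k\tfrac{n-1}{2}-\langle\rho_{B^-},\mu\rangle$. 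Since $c_\mu(q^{-1})=K_{k\varepsilon_n,\mu}(q)$, the asserted formula follows. The only step requiring genuine care is this last bookkeeping: keeping straight the identifications $X^*(\hat T)=X_*(T)$, $X_*(\hat T)=X^*(T)$ and the anti-dominant/opposite-Borel conventions of \S\ref{sec:L-basic}, so that the $\check\rho_{B^-}$ in Example~\ref{ex:q-analogue}(3) is matched correctly with the $\rho_{B^-}$ appearing in the statement; everything else is short and routine. (As an alternative to Example~\ref{ex:q-analogue}(3) one could identify the unique semistandard tableau of shape $(k)$ and content $\mu$ and compute its charge, but the degree formula is cleaner.)
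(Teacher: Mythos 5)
Your proposal is correct and follows essentially the same route as the paper: irreducibility of $\Sym^k(\mathrm{Std})$ via Proposition~\ref{prop:Sym-irred} with the same density-of-$\hat U\cdot(\C v_n)$ argument, reduction of \eqref{eqn:c_mu} to the single polynomial $K_{k\varepsilon_n,\mu}$, and then monomiality from $K(1)=1$ plus non-negativity together with the degree formula of Example~\ref{ex:q-analogue}. The only (harmless) divergence is that you settle the vanishing branch by computing explicitly when $\mu\leq k\varepsilon_n$ in the Bruhat order, whereas the paper reads it off from the weight multiplicity at $q=1$; your version is in fact slightly more careful than the paper's passing claim that $\mu\leq\lambda$ always holds.
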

\begin{proof}
  We begin by showing that $\Sym^k(\text{Std})$ is irreducible of highest weight $\lambda := k\varepsilon_n$. There are several ways to do this, eg. by Schur-Weyl duality; here we opt for the invariant-theoretic approach via Proposition \ref{prop:Sym-irred}. Indeed, the orbit $\hat{U} \cdot (\C v_n)$ is open dense since the $n$-th column of elements in $\hat{U}$ is arbitrary except for the last $1$.

  For $\lambda$ as above, we always have $\mu \leq \lambda$. Therefore \eqref{eqn:c_mu} reduces to
  $$ c_\mu(q^{-1}) = K_{\lambda,\mu}(q). $$
  Specialized at $q=1$, we obtain the multiplicity of the weight $\mu$ in $\Sym^k(\text{Std})$ (recall Example \ref{ex:q-analogue}), that is, the cardinality of
  $$ \left\{ (b_1, \ldots, b_n) \in \Z_{\geq 0}^n : \sum_{i=1}^n b_i \varepsilon_i = \mu \right\}, $$
  which is $1$ if $a_1, \ldots, a_n \geq 0$, and zero otherwise.

  Since $c_\mu(q^{-1}) \in \Z_{\geq 0}[q]$, it must be a monomial in $q$ when $a_1, \ldots, a_n \geq 0$, and zero otherwise. In the former case, we have seen that
  $$ \deg K_{\lambda,\mu}(q) = \angles{\lambda-\mu, \rho_{B^-}} = \frac{n-1}{2} \cdot k - \angles{\rho_{B^-}, \mu}. $$
  This concludes the proof.
\end{proof}

\begin{theorem}
  Let
  \begin{align*}
    \Lambda & := \left\{ \sum_{i=1}^n a_i \varepsilon_i \in X_*(T)_- : a_1, \ldots, a_n \geq 0  \right\} \\
    & = \left\{ \sum_{i=1}^n a_i \varepsilon_i \in X_*(T) : 0 \leq a_1 \leq \cdots \leq a_n \right\}.
  \end{align*}
  The basic function in the Tamagawa-Godement-Jacquet case is given by
  $$ f_{\mathrm{Std},X} = \sum_{\mu \in \Lambda} q_F^{-\frac{n-1}{2} \det\mu} \cdot  \mathbbm{1}_{K\mu(\varpi)K} X^{\det\mu}. $$
\end{theorem}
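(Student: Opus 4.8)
The plan is direct: the statement is essentially a repackaging of Lemma~\ref{prop:c_mu-Std} fed into Definition~\ref{def:basic}. The only things to watch are the combinatorial description of $\Lambda$ and the bookkeeping of the involution $q \leftrightarrow q^{-1}$ together with the specialization $q \mapsto q_F$.

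First I would reconcile the two descriptions of $\Lambda$. The simple roots relative to $B^-$ are $\varepsilon_{i+1}-\varepsilon_i$ for $1 \le i \le n-1$, so for $\mu = \sum_{i=1}^n a_i\varepsilon_i$ the condition $\mu \in X_*(T)_-$ reads $a_1 \le a_2 \le \cdots \le a_n$. Intersecting with $\{a_i \ge 0 \text{ for all } i\}$ --- which for an increasing chain is the single constraint $a_1 \ge 0$ --- gives exactly $\{0 \le a_1 \le \cdots \le a_n\}$; in particular every $\mu \in \Lambda$ satisfies $\det\mu = \sum_i a_i \ge 0$, so $\Lambda \subset X_*(T)_\ominus$.

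Next I would rewrite Lemma~\ref{prop:c_mu-Std} in the form I need. That lemma computes $c_\mu(q^{-1})$; since $c_\mu \in \Z[q^{-1}]$, the identity $c_\mu(q^{-1}) = q^{\frac{n-1}{2}\det\mu - \angles{\rho_{B^-},\mu}}$ for $\mu \in \Lambda$ (and $c_\mu = 0$ for $\mu \in X_*(T)_-\setminus\Lambda$, which by \eqref{eqn:c_mu} is already the case whenever $\det\mu<0$ or some $a_i<0$) is equivalent to $c_\mu(q) = q^{\angles{\rho_{B^-},\mu} - \frac{n-1}{2}\det\mu}$, hence $c_\mu(q_F) = q_F^{\angles{\rho_{B^-},\mu} - \frac{n-1}{2}\det\mu}$ for $\mu \in \Lambda$ and $c_\mu(q_F)=0$ otherwise. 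Substituting this into the formula of Definition~\ref{def:basic}, the coefficient of $\mathbbm{1}_{K\mu(\varpi)K}\,X^{\det\mu}$ becomes
$$ c_\mu(q_F)\, q_F^{-\angles{\rho_{B^-},\mu}} = q_F^{-\frac{n-1}{2}\det\mu} \qquad (\mu \in \Lambda), $$
the two occurrences of $\angles{\rho_{B^-},\mu}$ cancelling, while all $\mu \notin \Lambda$ drop out. This is precisely the asserted formula.

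I expect no genuine obstacle at this stage: the substance of the computation --- the irreducibility of $\Sym^k(\mathrm{Std})$ via Proposition~\ref{prop:Sym-irred} and the exact value of $\deg K_{\lambda,\mu}$ --- already resides in Lemma~\ref{prop:c_mu-Std}, so the present theorem is just its clean statement. As a sanity check one may note that the support $K\Lambda K$ matches Corollary~\ref{prop:basic-supp} (here $\mathcal{C}_\rho \cap X_*(T)_- = \Lambda_\R$), and that specializing $X \mapsto q_F^{-s}$ at $s = -\tfrac{n-1}{2}$ yields $f_{\mathrm{Std},-(n-1)/2} = \sum_{\mu\in\Lambda}\mathbbm{1}_{K\mu(\varpi)K}$, the characteristic function of $\text{Mat}_{n\times n}(\mathfrak{o}_F)$ restricted to $\GL(n,F)$, thereby recovering the Tamagawa--Godement--Jacquet function.
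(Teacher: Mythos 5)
Your proposal is correct and follows exactly the paper's own route: the paper's proof is literally ``Immediate from Definition~\ref{def:basic} and Lemma~\ref{prop:c_mu-Std}'', and you have simply spelled out the inversion $c_\mu(q^{-1}) \leftrightarrow c_\mu(q_F)$, the cancellation of $q_F^{-\angles{\rho_{B^-},\mu}}$, and the identification of $\Lambda$ with the anti-dominant cocharacters having non-negative coordinates. No gaps.
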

\begin{proof}
  Immediate from Definition \ref{def:basic} and Lemma \ref{prop:c_mu-Std}.
\end{proof}

Note that $\Lambda$ equals the monoid $\mathcal{C}_\rho \cap X^*(T)_-$ appearing in Corollary \ref{prop:basic-supp}, for $\rho=\text{Std}$.

Set $\lambda_i := \varepsilon_{n-i+1} + \cdots + \varepsilon_n$ for $1 \leq i \leq n$. Then
\begin{gather}\label{eqn:C-basis-GJ}
  \Lambda = \Z_{\geq 0} \lambda_1 \oplus \cdots \oplus \Z_{\geq 0} \lambda_n.
\end{gather}

\begin{theorem}
  The generating function in Corollary \ref{prop:c_mu-rat-fun} takes the form
  $$ \sum_{\mu \in X_*(T)_\ominus} c_\mu(q^{-1}) e^\mu X^{\det(\mu)} = \prod_{i=1}^n \left( 1 - q^{\frac{i(i-1)}{2}} e^{\lambda_i} X^i \right)^{-1}. $$
\end{theorem}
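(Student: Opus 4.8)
The plan is to feed Lemma~\ref{prop:c_mu-Std} into the basis decomposition \eqref{eqn:C-basis-GJ} and observe that all the relevant quantities are additive in $\mu$, so that the generating series collapses into a product of geometric series. First I would invoke Lemma~\ref{prop:c_mu-Std}: it tells us that $c_\mu(q^{-1})$ vanishes unless $\mu \in \Lambda$, and that for $\mu \in \Lambda$ it is the single monomial $q^{\frac{n-1}{2}\det(\mu) - \angles{\rho_{B^-}, \mu}}$. Hence
$$ \sum_{\mu \in X_*(T)_\ominus} c_\mu(q^{-1}) e^\mu X^{\det(\mu)} = \sum_{\mu \in \Lambda} q^{\frac{n-1}{2}\det(\mu) - \angles{\rho_{B^-}, \mu}}\, e^\mu X^{\det(\mu)}, $$
and by \eqref{eqn:C-basis-GJ} every $\mu \in \Lambda$ is uniquely of the form $\mu = \sum_{i=1}^n m_i \lambda_i$ with $m_i \in \Z_{\geq 0}$.

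Next I would use that $\det(-)$, $e^{(-)}$ and $\angles{\rho_{B^-}, -}$ are all additive in $\mu$, so it is enough to evaluate them on the generators $\lambda_i = \varepsilon_{n-i+1} + \cdots + \varepsilon_n$. We have $\det(\lambda_i) = i$, so $X^{\det(\mu)} = \prod_i (X^i)^{m_i}$ and $e^\mu = \prod_i (e^{\lambda_i})^{m_i}$. For the $q$-exponent I would write the half-sum of negative roots of $\GL(n)$ in standard coordinates, $\rho_{B^-} = \sum_{k=1}^n \bigl(k - \tfrac{n+1}{2}\bigr) e_k$ (with $e_k$ the character reading off the $k$-th diagonal entry, so $\angles{e_j, \varepsilon_k} = \delta_{jk}$; this normalization is the one forced by the identity $\angles{\rho_{B^-}, \varepsilon_n} = \tfrac{n-1}{2}$ already used in the proof of Lemma~\ref{prop:c_mu-Std}), and compute
$$ \angles{\rho_{B^-}, \lambda_i} = \sum_{k=n-i+1}^{n} \Bigl( k - \tfrac{n+1}{2} \Bigr) = \tfrac{i(n-i)}{2}. $$
Therefore the $q$-exponent attached to $\mu = \sum_i m_i \lambda_i$ splits:
$$ \tfrac{n-1}{2}\det(\mu) - \angles{\rho_{B^-}, \mu} = \sum_{i=1}^n m_i \Bigl( \tfrac{(n-1)i}{2} - \tfrac{i(n-i)}{2} \Bigr) = \sum_{i=1}^n m_i \,\tfrac{i(i-1)}{2}. $$

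Finally, since $(m_1, \dots, m_n) \mapsto \sum_i m_i \lambda_i$ is a bijection $\Z_{\geq 0}^n \rightiso \Lambda$, the whole sum factors as a product over $i$ of geometric series,
$$ \sum_{\mu \in \Lambda} q^{\frac{n-1}{2}\det(\mu) - \angles{\rho_{B^-}, \mu}} e^\mu X^{\det(\mu)} = \prod_{i=1}^n \sum_{m \geq 0} \bigl( q^{i(i-1)/2} e^{\lambda_i} X^i \bigr)^{m} = \prod_{i=1}^n \bigl( 1 - q^{i(i-1)/2} e^{\lambda_i} X^i \bigr)^{-1}, $$
the series converging in the relevant completion because each $e^{\lambda_i} X^i$ is a non-invertible monomial. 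I do not anticipate any serious obstacle here: the real content is Lemma~\ref{prop:c_mu-Std}, and the only delicate point is getting the sign and normalization of $\rho_{B^-}$ and of the $X_*(T)$–$X^*(T)$ pairing right, which is pinned down by consistency with the degree computation already carried out for $K_{\lambda,\mu}$.
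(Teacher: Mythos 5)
Your proposal is correct and follows essentially the same route as the paper: feed Lemma \ref{prop:c_mu-Std} into the free-monoid decomposition \eqref{eqn:C-basis-GJ}, use linearity of the $q$-exponent in $\mu$ together with $\det(\lambda_i)=i$ and $\angles{\rho_{B^-},\lambda_i}=\tfrac{i(n-i)}{2}$, and factor the sum into geometric series. Your explicit coordinate computation of $\rho_{B^-}$ is just a more detailed version of the paper's direct evaluation of $\angles{\lambda_i,\rho_{B^-}}$, so there is no substantive difference.
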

\begin{proof}
  In Lemma \ref{prop:c_mu-Std}, the exponent of $q$ in $c_\mu(q^{-1})$ is linear in $\mu \in \Lambda$. Since $\det(\lambda_i)=i$ and
  $$ \angles{\lambda_i, \rho_{B^-}} = \underbrace{\frac{n-1}{2} + \cdots + \frac{n-2i+1}{2}}_{i \text{ terms}} = \dfrac{i(n-i)}{2}, $$
  the coefficient of $q$ in $c_{\lambda_i}(q^{-1})$ is $\frac{i(i-1)}{2}$. We conclude by using \eqref{eqn:C-basis-GJ}.
\end{proof}

The specialization $s = -\frac{n-1}{2}$, i.e. $X \leadsto q_F^{-s} = q_F^{\frac{n-1}{2}}$ yields
$$ f_{\text{Std}, -(n-1)/2} = \sum_{\mu \in \Lambda} \mathbbm{1}_{K\mu(\varpi)K} \quad \in \mathcal{H}_\text{ac}(G(F)\sslash K). $$

Recall the elementary fact
$$ \text{Mat}_{n \times n}(\mathfrak{o}_F) \cap \GL(n,F) = \bigsqcup_{\mu \in \Lambda} K\mu(\varpi)K. $$
Hence $f_{\text{Std}, -(n-1)/2} = \mathbbm{1}_{\text{Mat}_{n \times n}(\mathfrak{o}_F)}|_{\GL(n,F)}$, which is a very familiar element of the Schwartz-Bruhat space of $\text{Mat}_{n \times n}(F)$.

Let $(\pi, V_\pi)$ be a $K$-unramified irreducible representation of $G(F)$. Then the Proposition \ref{prop:basic-L} implies
$$ L\left( -\frac{n-1}{2}, \pi, \text{Std} \right) = \Tr\left( \pi(\mathbbm{1}_{\text{Mat}_{n \times n}(\mathfrak{o}_F)}) \big| V_\pi \right) $$
upon twisting $\pi$ by $|\det|_F^s$ for $\Re(s) \gg 0$ to ensure convergence. By the Remark \ref{rem:basic-L}, it is also equal to $\int_{\GL(n,F)} a(x) \mathbbm{1}_{\text{Mat}_{n \times n}(\mathfrak{o}_F)}(x) \dd x$ where $a(x)$ is the matrix coefficient $\angles{\check{v}, \pi(x)v}$ for $\pi$ with $K$-fixed $v,\check{v}$ and $\angles{\check{v},v}=1$.

All these facts are already contained in \cite{Ta63,GJ72}, although they were derived in a quite different manner there.

\subsection{The spinor $L$-factor for $\GSp(4)$}\label{sec:GSp4}
Fix a base field $\Bbbk$ of characteristic $\neq 2$. We begin by reviewing the structure of the symplectic similitude group $\GSp(4)$ over $\Bbbk$.

Consider a symplectic $\Bbbk$-vector space of dimension $4$, equipped with the symplectic form $\angles{\cdot|\cdot}$ and a basis $e_{-2}, e_{-1}, e_1, e_2$ with
\begin{compactitem}
  \item $\angles{e_i | e_{-j}} = \delta_{i,j}$ for $1 \leq i,j \leq 2$;
  \item $\angles{e_1 | e_2} = \angles{e_{-1} | e_{-2}} = 0$.
\end{compactitem}

Let $g \mapsto {}^* g$ be the transpose anti-automorphism such that $\angles{gx|y} = \angles{x|{}^* g y}$. Using this basis, we have the identifications
\begin{align*}
  G_0 := \Sp(4) &= \{g \in \GL(4) : {}^t g J g = J\}, \\
  G := \GSp(4) &= \{ g \in \GL(4) : \exists \sigma(g) \in \Gm, \; {}^t g J g = \sigma(g) J \} \\
  & = \{(g,\sigma(g)) \in \GL(4) \times \Gm : {}^* g g = \sigma(g) \},
\end{align*}
where
$$ J := \begin{pmatrix}
  & & & -1 \\
  & & -1 & \\
  & 1 & & \\
  1 & & &
\end{pmatrix} \quad \text{ and } {}^* g = J^{-1} \cdot {}^t g \cdot J. $$

The second description of $\GSp(4)$ makes it a closed $\Bbbk$-algebraic subgroup of $\GL(4)$. We have the \textit{similitude character} $\sigma: \GSp(4) \to \Gm$ which maps $g$ to the element $\sigma(g)$. The center of $G$ coincides with the center $\Gm$ of $\GL(4)$.

The standard Borel pair for $\GL(4)$ induces a Borel pair $(B,T)$ for $G$ (resp. $(B_0,T_0)$ for $G_0$) by taking intersections. In particular, $B$ is upper triangular and $T$ is diagonal.

When $\Bbbk = F$ is a non-archimedean local field of characteristic $\neq 2$, we take the vertex in the Bruhat-Tits building of $G$ arising from the self-dual lattice $\bigoplus_{\pm i = 1,2} \mathfrak{o}_F e_i$ and define the hyperspecial subgroup $K$ of $G(F)$ accordingly.

Choose the standard basis $\check{\varepsilon}_1, \ldots, \check{\varepsilon}_4$ for $X_*(\Gm^4)$ where $\Gm^4 \hookrightarrow \GL(4)$. Let $\varepsilon_1, \ldots, \varepsilon_4$ be its dual basis of $X^*(\Gm^4)$. Then $X_*(T)$ is the subgroup of $X_*(\Gm^4)$ defined by the equation $\varepsilon_1 - \varepsilon_2 - \varepsilon_3 + \varepsilon_4 = 0$. In what follows, we identity each $\varepsilon_i$ with its image in $X^*(T)$.

\begin{compactitem}
  \item The elements of $\Sigma_B$ are: $\alpha := \varepsilon_1 - \varepsilon_2$, $\beta := \varepsilon_2 - \varepsilon_3$, $\alpha+\beta = \varepsilon_1 - \varepsilon_3$, $2\alpha+\beta = 2\varepsilon_1 - \varepsilon_2 - \varepsilon_3$.
  \item The elements of $\Delta_B^\vee$ are: $\check{\alpha} = \check{\varepsilon}_1 - \check{\varepsilon}_2 + \check{\varepsilon}_3 - \check{\varepsilon}_4$, $\check{\beta} = \check{\varepsilon}_2 - \check{\varepsilon}_3$. They generate $X_*(T_0)$.
  \item The similitude character $\sigma$ restricted to $T$ is $\mu := \varepsilon_1 + \varepsilon_4 = \varepsilon_2 + \varepsilon_3$.
  \item The inclusion of the center $\Gm \hookrightarrow G$ is $\check{\mu} := \check{\varepsilon}_1 + \check{\varepsilon}_2 + \check{\varepsilon}_3 + \check{\varepsilon}_4 \in X_*(T)$.
  \item Define $\check{\gamma} := \check{\varepsilon}_1 + \check{\varepsilon}_2 \in X_*(T)$, then we have $\mu \circ \check{\gamma} = \identity$. This implies
    $$ X_*(T) = \Z\check{\alpha} \oplus \Z\check{\beta} \oplus \Z\check{\gamma}. $$
    For example, we have $\check{\mu} = -\check{\alpha} - 2\check{\beta} + 2\check{\gamma}$.
\end{compactitem}

%\paragraph{Standard representation}
The standard representation $(\rho, V)$ is simply the inclusion $\rho: G \hookrightarrow \GL(4)$, which satisfies $\rho \circ \mu = \identity$. Its weights are
\begin{align*}
  \xi & = \varepsilon_1 = \alpha + \frac{\beta + \mu}{2}, \\
  \xi-\alpha & = \varepsilon_2, \\
  \xi-\alpha-\beta & = \varepsilon_3, \\
  \xi-2\alpha-\beta & = \varepsilon_4,
\end{align*}
where $\varepsilon_1$ is highest relative to $B$ and $\varepsilon_4$ is highest relative to $B^-$; in particular $\rho$ is irreducible. Each weight has multiplicity one.

%\paragraph{Self-duality}
In general, the dual group of $\GSp(2n)$ is the split $\GSpin(2n+1)$, which carries the irreducible spin representation $(\text{spin}, V)$ with $\dim V = 2^n$; here it is convenient to take the dual group over $\Bbbk$. A special feature of $\GSp(4)$ is that it is isomorphic to its dual $\GSpin(5)$, under which the representation $\text{spin}$ becomes the standard representation $\GSp(4) \hookrightarrow \GL(4)$. This self-duality amounts to the existence of an isomorphism $\Phi: X^*(T) \rightiso X_*(T)$ such that $\Phi$ sends simple roots to simple coroots, and so does its transpose ${}^t \Phi$. By considerations of root lengths, we must have $\alpha \mapsto \check{\beta}$ and $\beta \mapsto \check{\alpha}$. On the other hand, $\Ker\alpha \cap \Ker\beta = \Z\check{\mu}$ and $(\Z\check{\alpha} \oplus \Z\check{\beta})^\perp = \Z\mu$, thus $\mu \mapsto \pm\check{\mu}$. There is an involution of $\GSp(4)$ that is identity on $\Sp(4)$ and flips the similitude character: simply take $g \mapsto {}^* g^{-1}$. Hence we may assume $\mu \mapsto \check{\mu}$. This completely determines $\Phi: X^*(T) \rightiso X_*(T)$ and one sees that
$$ \Phi(\varepsilon_1) = \Phi(\alpha) + \frac{\Phi(\beta)+\Phi(\mu)}{2} = \check{\beta} + \frac{\check{\alpha}+\check{\mu}}{2} = \check{\gamma}. $$

Now let us take $\Bbbk = F$ be a non-archimedean local field of characteristic $\neq 2$ and define $G$, $G_0$, etc. The dual groups $\widehat{G_0}$, $\hat{G}$, the spin or standard representation $\text{spin}: \hat{G} \to \GL(4,\C)$ and the based root datum for $\hat{G}$ are described as above with $\Bbbk = \C$.

In Ngô's recipe \S\ref{sec:Ngo}, we start from $\bar{\xi}: \Gm \xrightarrow{\xi} T \twoheadrightarrow T_{0,\text{ad}}$ where $\xi \in X_*(T) = X^*(\hat{T})$ is the highest weight of the standard representation of $\hat{G}$, relative to $B^-$. As in \S\ref{sec:GJ}, the resulting framework is $G = \GSp(4)$ with $\det_G$ being the similitude character $\sigma: G \to \Gm$. The corresponding irreducible representation $\rho$ of $\hat{G}$ is $\text{spin}: \GSp(4,\C) \to \GL(4,\C)$. We have to describe its weights first.

Recall that $\hat{G}$ is isomorphic to $\GSp(4,\C)$, therefore $X^*(\hat{T})$ is isomorphic to $X^*(T)$. The highest weight $\xi \in X^*(\hat{T})$ of $\text{spin}$ relative to $\hat{B}^-$ is mapped to $\varepsilon_4 \in X^*(T)$, the highest weight of the standard representation of $\GSp(4)$ relative to $B^-$. The identification is realized as follows.

$$\xymatrix@C=5pc{
  X^*(\hat{T}) \ar@{=}[r]_{\text{canonical}} \ar@/^1.5pc/[rr] & X_*(T) \ar[r]^{\sim}_{\Phi^{-1}} & X^*(T)
}$$
where $\Phi$ is the self-duality isomorphism described before. Under $\Phi$ we have
\begin{align*}
  \varepsilon_1 & \longmapsto \check{\gamma} & = \check{\varepsilon}_1 + \check{\varepsilon}_2, \\
  \varepsilon_2 & \longmapsto -\check{\beta} + \check{\gamma} & = \check{\varepsilon}_1 + \check{\varepsilon}_3, \\
  \varepsilon_3 & \longmapsto -\check{\alpha} - \check{\beta} + \check{\gamma} & = \check{\varepsilon}_2 + \check{\varepsilon}_4, \\
  \varepsilon_4 & \longmapsto -\check{\alpha} - 2\check{\beta} + \check{\gamma} & = \check{\varepsilon}_3 + \check{\varepsilon}_4.
\end{align*}

\begin{proposition}\label{prop:C-spin}
  Define the cone $\mathcal{C}_\mathrm{spin} \subset X_*(T)_\R$ as in Corollary \ref{prop:basic-supp}. Then
  \begin{align*}
    \mathcal{C}_\mathrm{spin} & = \left\{ \sum_{i=1}^4 x_i \check{\varepsilon}_i : x_1 + x_4 = x_2 + x_3, \; x_1, x_2, x_3 \geq 0, x_2+x_3 \geq x_1 \right\}, \\
    \mathcal{C}_\mathrm{spin} \cap X_*(T)_- & = \left\{ \sum_{i=1}^4 x_i \check{\varepsilon}_i : x_1 + x_4 = x_2 + x_3, \; 0 \leq x_1 \leq x_2 \leq x_3 \right\} \\
    & = \Z_{\geq 0} (\check{\varepsilon}_3 + \check{\varepsilon}_4) \oplus \Z_{\geq 0} (\check{\varepsilon}_2 + \check{\varepsilon}_3 + 2\check{\varepsilon}_4) \oplus \Z_{\geq 0} (\check{\varepsilon}_1 + \check{\varepsilon}_2 + \check{\varepsilon}_3 + \check{\varepsilon}_4).
  \end{align*}
\end{proposition}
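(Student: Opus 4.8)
The plan is to treat all three equalities as elementary computations inside the lattice $X_*(T) \subset X_*(\Gm^4) = \Z\check{\varepsilon}_1 \oplus \cdots \oplus \Z\check{\varepsilon}_4$, which by the preceding discussion is cut out by $x_1 - x_2 - x_3 + x_4 = 0$, i.e.\ $x_1 + x_4 = x_2 + x_3$, writing a general element as $\sum_{i=1}^4 x_i \check{\varepsilon}_i$. The input is the list of weights of $\mathrm{spin}$ already computed in $X_*(T) = X^*(\hat{T})$ just above: each with multiplicity one,
$$ w_1 := \check{\varepsilon}_1 + \check{\varepsilon}_2, \qquad w_2 := \check{\varepsilon}_1 + \check{\varepsilon}_3, \qquad w_3 := \check{\varepsilon}_2 + \check{\varepsilon}_4, \qquad w_4 := \check{\varepsilon}_3 + \check{\varepsilon}_4, $$
all lying on $\det_G = 1$. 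By Corollary \ref{prop:basic-supp} (and its proof via Proposition \ref{prop:classical}), $\mathcal{C}_{\mathrm{spin}}$ is the strongly convex polyhedral cone $\R_{\geq 0} w_1 + \cdots + \R_{\geq 0} w_4 \subset X_*(T)_\R$.

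For the first equality I would identify this cone with the image of $(\R_{\geq 0})^4$ under $(a,b,c,d) \mapsto a w_1 + b w_2 + c w_3 + d w_4 = (a+b,\, a+c,\, b+d,\, c+d)$. The inclusion $\subseteq$ is immediate, since such a point satisfies $x_1 + x_4 = x_2 + x_3$ and $x_1 = a + b \leq (a+c) + (b+d) = x_2 + x_3$. For $\supseteq$, given $x$ with $x_1, x_2, x_3 \geq 0$, $x_2 + x_3 \geq x_1$ and $x_1 + x_4 = x_2 + x_3$, set $a := \min(x_1, x_2)$, $b := x_1 - a$, $c := x_2 - a$, $d := x_3 - b = x_3 - x_1 + a$; then $a, b, c, d \geq 0$ (one uses $x_1 \leq x_2 + x_3$ to get $d \geq 0$ in the case $a = x_2$), and the fourth coordinate $c + d = x_2 + x_3 - x_1$ equals $x_4$ by the lattice relation. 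Equivalently, observing that under $x_1 + x_4 = x_2 + x_3$ the inequality $x_2 + x_3 \geq x_1$ merely says $x_4 \geq 0$, the claimed set is $X_*(T)_\R \cap (\R_{\geq 0})^4$, a three-dimensional cone whose extreme rays --- obtained by setting two of the four coordinates to $0$ and using the lattice relation --- are precisely $\R_{\geq 0} w_1, \ldots, \R_{\geq 0} w_4$.

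For the second equality I would intersect with the anti-dominant cone. Since $\Delta_{B^-} = \{-\alpha, -\beta\}$ with $\alpha = \varepsilon_1 - \varepsilon_2$ and $\beta = \varepsilon_2 - \varepsilon_3$, membership in $X_*(T)_{\R,-}$ amounts to $\angles{\alpha, x} \leq 0$ and $\angles{\beta, x} \leq 0$, i.e.\ $x_1 \leq x_2 \leq x_3$. Intersecting with the cone of the first step and noting that once $0 \leq x_1 \leq x_2 \leq x_3$ holds the conditions $x_2, x_3 \geq 0$ and $x_2 + x_3 \geq x_1$ are automatic, one obtains $\mathcal{C}_{\mathrm{spin}} \cap X_*(T)_- = \{\sum x_i \check{\varepsilon}_i : x_1 + x_4 = x_2 + x_3,\ 0 \leq x_1 \leq x_2 \leq x_3\}$, the middle description.

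Finally, for the monoid decomposition I would set $u_1 := \check{\varepsilon}_3 + \check{\varepsilon}_4$, $u_2 := \check{\varepsilon}_2 + \check{\varepsilon}_3 + 2\check{\varepsilon}_4$, $u_3 := \check{\varepsilon}_1 + \check{\varepsilon}_2 + \check{\varepsilon}_3 + \check{\varepsilon}_4$ and check directly that each lies in $\mathcal{C}_{\mathrm{spin}} \cap X_*(T)_-$. For a general lattice point $x$ of this cone, solving $x = a u_1 + b u_2 + c u_3$ coordinate by coordinate forces $c = x_1$, $b = x_2 - x_1$, $a = x_3 - x_2$, and the $\check{\varepsilon}_4$-coordinate then matches automatically because $x_4 = x_2 + x_3 - x_1$; the coefficients $a, b, c$ are non-negative integers exactly because $0 \leq x_1 \leq x_2 \leq x_3$, and uniqueness is clear from the triangular shape of the system. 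This gives $\mathcal{C}_{\mathrm{spin}} \cap X_*(T)_- = \Z_{\geq 0} u_1 \oplus \Z_{\geq 0} u_2 \oplus \Z_{\geq 0} u_3$. The whole argument is routine polyhedral bookkeeping; the only point needing attention is keeping the lattice relation $x_1 + x_4 = x_2 + x_3$ in force throughout and spotting which of the written inequalities are redundant --- this is exactly what makes the three successive descriptions collapse to the stated forms --- so I do not anticipate a genuine obstacle.
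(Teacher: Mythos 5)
Your proposal is correct and follows essentially the same route as the paper: both reduce the first equality to the non-negative solvability of the linear system whose columns are the four spin weights $\check{\varepsilon}_1+\check{\varepsilon}_2,\ \check{\varepsilon}_1+\check{\varepsilon}_3,\ \check{\varepsilon}_2+\check{\varepsilon}_4,\ \check{\varepsilon}_3+\check{\varepsilon}_4$, then impose $x_1\leq x_2\leq x_3$ for anti-dominance and note the redundancies. The only cosmetic differences are that the paper parametrizes the whole solution set in $d$ whereas you exhibit one explicit non-negative solution, and that you spell out the final $\Z_{\geq 0}$-basis computation that the paper dismisses as routine.
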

\begin{proof}
  Recall that $\varepsilon_1 + \varepsilon_4 = \varepsilon_2 + \varepsilon_4$ defines $X_*(T)$. We have $\sum_{i=1}^4 x_i \check{\varepsilon}_i \in \mathcal{C}_\text{spin}$ if and only if the equation
  $$
    \begin{pmatrix}
      1 & 1 & 0 & 0 \\
      1 & 0 & 1 & 0 \\
      0 & 1 & 0 & 1 \\
      0 & 0 & 1 & 1 \\
    \end{pmatrix}
    \begin{pmatrix}
      a \\ b \\ c \\ d
    \end{pmatrix} =
    \begin{pmatrix}
      x_1 \\ x_2 \\ x_3 \\ x_4
    \end{pmatrix},
    \qquad a, b, c, d \geq 0
  $$
  is solvable. The solutions are given by $c = -x_1 + x_2 + x_3 - d$, $b = x_3 - d$, $a = x_1 - x_3 + d$. Non-negative solutions exist if and only if there exists $d \geq 0$ such that
  \begin{gather*}
    -x_1 + x_2 + x_3 \geq d \geq -x_1 + x_3, \\
    x_3 \geq d \geq -x_1 + x_3.
  \end{gather*}
  From those inequalities we derive $x_1, x_2 \geq 0$; the non-negativity of $d$ then implies $x_2 + x_3 \geq x_1$ and $x_3 \geq 0$, in which case the non-negative solutions are parametrized by
  $$ \min\{ -x_1 + x_2 + x_3,\; x_3 \} \geq d \geq \max\{-x_1 + x_3, 0 \}. $$

  Taking intersection with $X_*(T)_-$ imposes the constraint $0 \leq x_1 \leq x_2 \leq x_3$, which implies $x_2 + x_3 \geq x_1$. Note that $x_4 = -x_1 + x_2 + x_3 \geq x_3$. This gives the first description of $\mathcal{C}_\mathrm{spin} \cap X_*(T)_-$; the second follows in a routine manner.
\end{proof}

Unlike the case of standard $L$-factors, we are unable to determine the coefficients $c_\mu(q)$ completely in this article. In what follows, we discuss some relations to Satake's work \cite{Sat63} on the spinor $L$-factor for $\GSp(4)$.

\begin{enumerate}
  \item Firstly, set
    $$ \MSp(4) := \left\{ (X,\sigma(X)) \in \text{Mat}_{4 \times 4} \times \Ga : {}^* X X = \sigma(X) \right\} \hookrightarrow \text{Mat}_{4 \times 4}, $$
    or equivalently, the Zariski closure of $\GSp(4)$ in $\text{Mat}_{4 \times 4}$. It is a reductive algebraic monoid with unit group $\GSp(4)$ and inherits the $\mathfrak{o}_F$-structure from that of $\angles{\cdot|\cdot}$. Likewise one can define the reductive monoid $\MSp(2n)$ for $n \geq 1$. Hereafter we identify $\mathbbm{1}_{\MSp(4, \mathfrak{o}_F)}$ with its restriction to $G(F)$.
  \item Satake \cite{Sat63} and Shimura \cite{Shi63} tried to extend the Tamagawa-Godement-Jacquet construction to $\GSp(4)$ by taking the function $\mathbbm{1}_{\MSp(4, \mathfrak{o}_F)} \in \mathcal{H}_\text{ac}(G(F)\sslash K)$ instead of $f_{\text{spin}, s}$; its Satake transform is called the \textit{local Hecke series} of $G$ in \cite[Appendix 1]{Sat63}. Unlike the Tamagawa-Godement-Jacquet case, it turns out that $\mathbbm{1}_{\MSp(4, \mathfrak{o}_F)}$ is not basic enough: for a $K$-unramified irreducible representation $(\pi, V_\pi)$ of Satake parameter $c$, the case $\nu=2$ of \cite[Appendix 1, \S 3]{Sat63} says
  \begin{align*}
    \Tr(\mathbbm{1}_{\MSp(4, \mathfrak{o}_F)} \big| V_\pi) & = \frac{P(c)}{\det\left( 1 - q_F^{-\frac{3}{2}}\text{spin}(c) \right)} = P(c) \cdot L\left(-\frac{3}{2}, \pi, \text{spin}\right) \\
    & = P(c) \cdot \Tr\left( f_{\text{spin}, -\frac{3}{2}} \big| V_\pi \right)
  \end{align*}
  for some explicit $P \in \mathcal{H}(T(F)\sslash K_T)^W$ depending on $q_F$, upon twisting $\pi$ by some $|\sigma|_F^s$ for $\Re(s) \gg 0$ to make things converge. Moreover, $P \not\equiv 1$ as a function on $\hat{T}/W$. Equivalently,
  $$ \mathbbm{1}_{\MSp(4, \mathfrak{o}_F)} = \mathcal{S}^{-1}(P) * f_{\text{spin}, -\frac{3}{2}}. $$

  \item There is a conceptual explanation for the failure of $\mathbbm{1}_{\MSp(4, \mathfrak{o}_F)}$ to yield the spinor $L$-factor. In what follows, we cheat somehow by considering the case of generic $q_F$, i.e. $q_F = q$ will be viewed as an indeterminate. Let $s \in \Q$. If $f_{\text{spin},s}$ equals the restriction of $\mathbbm{1}_{\MSp(4, \mathfrak{o}_F)}$ to $G(F)$, then for each $\mu \in X_*(T)_-$ the coefficient of $\mathbbm{1}_{K\mu(\varpi)K}$ in $f_{\rho,s}$ would be either $0$ or $1$. Suppose this is the case for generic $q = q_F$, the same property will then hold for $q=1$. Now the coefficient of $\mathbbm{1}_{K\mu(\varpi)K}$ at $q=1$ equals $c_\mu(1)$, which is the cardinality of 
  $$ \left\{ (a_0, a_1, a_2, a_3) \in \Z_{\geq 0}^4 : a_0 (-\check{\alpha} - 2\check{\beta} + \check{\gamma}) + a_1 (-\check{\alpha} - \check{\beta} + \check{\gamma}) + a_2 (-\check{\beta}+\check{\gamma}) + a_3 \check{\gamma} = \mu \right\} . $$

  Since $X_*(T)$ is of rank $3$, the cardinality of this set has asymptotically polynomial growth in $\mu$ with degree $1$; see also the proof of Proposition \ref{prop:C-spin}. In particular, the $0$/$1$ dichotomy cannot hold.

  \item One can show the irreducibility of $\Sym^k(\text{spin})$ for all $k$ by Proposition \ref{prop:Sym-irred}, by imitating the proof Lemma \ref{prop:c_mu-Std}. This does not determine $c_\mu(q)$ for $(\GSp(4), \text{spin})$, since the weights of $\Sym^k(\text{spin})$ do not have multiplicity one in general.

  \item Some calculations for the Satake transforms of $\mathbbm{1}_{\MSp(2n, \mathfrak{o}_F)}$, as well as for some other classical similitude groups, are made in \cite{HS83} for $n \geq 2$: the non-trivial numerator $P$ is always present, and quickly becomes unmanageable.
\end{enumerate}

\bibliographystyle{abbrv}
\bibliography{basic}

\begin{flushleft}
  Wen-Wei Li \\
  Institute of Mathematics, \\
  Academy of Mathematics and Systems Science, Chinese Academy of Sciences, \\
  55, Zhongguancun East Road, \\
  100190 Beijing, People's Republic of China. \\
  E-mail address: \href{mailto:wwli@math.ac.cn}{\texttt{wwli@math.ac.cn}}
\end{flushleft}

\end{document}